\newtheorem{teo}{Theorem}[section]
\newtheorem{defi}{Definition}[section]
\newtheorem{obs}{Remark}[section]
\newtheorem{lema}{Lemma}[section]
\newtheorem{prop}{Proposition}[section]
\newtheorem{ex}{Example}[section]
\newtheorem{corollary}{Corollary}[section]
\newcommand{\Eo}{\mathscr{E}_{\Omega} }
\newcommand{\Fo}{\mathscr{F}_{\Omega} }
\newcommand{\Go}{\mathscr{G}_{\Omega} }
\newcommand{\Lo}{\mathscr{L}_{\Omega} }
\newcommand{\Mu}{\mathscr{M}_{1\Omega} }
\newcommand{\Md}{\mathscr{M}_{2\Omega} }
\newcommand{\No}{\mathscr{N}_{\Omega} }
\newcommand{\R}{\mathbb{R}}
\newcommand{\cal}{\mathcal}
\newcommand{\bI}{\mathbf{I}}
\newcommand{\bII}{\mathbf{II}}
\newcommand{\PP}{\mathbf{P}}
\title[Line Congruences on singular surfaces ]{
	Line Congruences on singular surfaces   }
\author{$\text{D. Lopes}^\dagger,\; \text{T.A. Medina-Tejeda}^\star,\;\text{M.A.S. Ruas}^\ddagger\; \text{and}\; \text{I.C. Santos}^*$}
\thanks{\\ ${^\dagger} \text{Partially supported by INCTMat/CAPES Proc. 88887.510549/2020-00.}\\
		{^\star}\text{Supported by FAPESP Proc. 2021/11253-3.}\\ 
	{^\ddagger}\text{Partially supported by FAPESP Proc. 2019/21181-0 and CNPq
		Proc. 305695/2019-3.}\\ 
	{^*}\text{Supported by CAPES Proc. PROEX-11365975/D}$}
\address{ 
	Débora Lopes da Silva,	Departamento de Matemática, Universidade Federal do Sergipe, Av. Marechal Rondon, s/n Jardim Rosa Elze - CEP 49100-000 São Cristóvao, SE, Brazil.}
\email{  debora@mat.ufs.br}
\address{Tito Alexandro Medina Tejeda, Instituto de Ciências Matemáticas e de Computacão - Universidade de São Paulo, Av. Trabalhador sao-carlense, 400 - Centro, CEP: 13566-590 - São Carlos - SP, Brazil.}
\email{ talexanmedinat@gmail.com}
\address{ Maria Aparecida Soares Ruas, Instituto de Ciências Matemáticas e de Computacão - Universidade de São Paulo, Av. Trabalhador sao-carlense, 400 - Centro, CEP: 13566-590 - São Carlos - SP, Brazil.}
\email{ maasruas@icmc.usp.br}
\address{
	Igor Chagas Santos, Instituto de Ciências Matemáticas e de Computacão - Universidade de São Paulo, Av. Trabalhador sao-carlense, 400 - Centro, CEP: 13566-590 - São Carlos - SP, Brazil.}
\email{ igor.chs34@gmail.com}
\begin{document}
	
	\begin{abstract}
		This paper is a first step in order to extend Kummer's theory for line congruences to the case $\lbrace x, \xi \rbrace
		$, where $x: U \to \R^3$ is a smooth map and $\xi: U \to \R^3$ is a proper frontal. We show that if $\lbrace x, \xi \rbrace$ is a normal congruence, the equation of the principal surfaces is a multiple of the equation of the developable surfaces, furthermore, the multiplicative factor is associated to the singular set of $\xi$.

		\end{abstract}
	
	\maketitle
	
	\section{ INTRODUCTION}\label{sec:int} 
	
	A line congruence in the Euclidean space of dimension $3$ is a   $2-$parameter family of lines in $\mathbb R^{3}$. 
	
The first record about line congruences appeared in  ``M\'emoire sur la Th\'eorie des D\'eblais et des Remblais" (1776,1784) where Gaspard Monge seeks to solve a minimizing cost problem of transporting an amount of land from one place to another, preserving the volume (see \cite{ghys} for historical notes).  After Monge, Ernst Eduard Kummer in ``Allgemeine Theorie der geradlinigen Strahien systeme" (see \cite{Eisenhart}) was the first to deal with the general theory of line congruences. This theory is currently known as {\it Kummer theory of line congruences}. In recent years, the subject achieved an important development with contributions by
\cite{craizer2020curvature}, \cite{izumiya2}, \cite{Izumiya}, \cite{nossoartigo},  \cite{samuel1} among others.



 A line congruence is defined by a pair $\{x, \xi\} \in C^{\infty}(U, \R^3 \times \R^3 \setminus \lbrace \bm{0} \rbrace)$ where $U \subset \R^2$ is an open set and for all $(u_1,u_{2}) \in U$,  $ \xi (u_1,u_{2})$ denotes the direction of the line through the point $ x (u_1,u_{2}) $. An example of line congruences is given by the normal lines of a surface in $\R^3$. Each smooth curve $C$  in  $ x(U)$  is the directrix curve of a ruled surface whose generators are the lines of the congruence passing through points of $C.$  These ruled surfaces are called {\it surfaces of congruences}. Line congruences and ruled surfaces are often used in optics, mechanics, space kinematics and robot motion planning (\cite{marle2015william}, \cite{selig2013geometrical}).

 Izumiya, Saji and Takeuchi, in \cite{Izumiya}, represent (locally) a line congruence
	as the image of the map $F_{(x,\xi)}:U\times I\rightarrow\R^3$ where $F_{(x,\xi)}(u_1,u_2,t)=x(u_1,u_2)+t\xi(u_1,u_2)$, $x:U\rightarrow\R^3, \xi:U\rightarrow\R^3-\{0\}$ are smooth mappings, $U\subset\R^2$ is an open region and $I$ is an open interval. In this sense, we can see that line congruences generalize the concept of parallel surfaces. 
	
Most  known results in Kummer's theory are formulated  for congruences
$\{ x, \xi \}$ where $x$ is a regular surface and $\xi$ is an immersion.  For instance, we discuss  in  Proposition \ref{Cong_sup_desen}  a nice way of defining 
lines of curvature using line congruences:  lines of curvature on a smooth surface are those curves
whose surfaces of congruence $S_C$ are developable.

 Our goal in this paper is to extend this theory to the case of line congruences
$\{x, \xi\}$ where $x $ is a smooth map and $\xi$  is a proper frontal. For these line congruences,
we define the first and second Kummer fundamental forms associated to a moving basis $\Omega$ of the frontal $\xi.$
Based on results of \cite{Medina}, \cite{Medina2} and \cite{Medina3}, we define the $\Omega$-Kummer curvature function, denoted by $\mathscr{K}_{q}^{\Omega}$.
Similarly to  the case in which $x$ is smooth, for each point $p \in x(U),$ the directions where this curvature assumes extreme values are the {\it Kummer principal directions.}
We determine the equation of the developable surfaces and the equation of the principal surfaces of the congruence. Our main result is Theorem \ref{teoprincipal}, in which we prove that for normal line congruences, the equation of the principal surfaces is a multiple of the equation of the developable surfaces, furthermore, the multiplicative factor is associated to the singular set of $\xi$. As a corollary, we obtain the
following extension of Proposition \ref{Cong_sup_desen}: If $x$ and $\xi$ are proper frontals with the same singular sets and such that  $\xi$ is the unit normal vector of $x$, then a curve on $x$ is a directrix curve of a principal surface of the congruence if and only if it is a line of curvature of $x.$

The paper is organized as follows. In section \ref{sec:back2} some basic concepts about line congruences and Kummer's theory are introduced. In section \ref{sec:back3} notation and some useful results about frontals are presented, taking into account the approach used in \cite{Medina}. Section \ref{sec:back4} is addressed to the study of line congruences $\lbrace x, \xi \rbrace$, where $x: U \to \R^3$ is a smooth map and $\xi: U \to \R^3$ is a proper frontal.


\section{Background about line congruences}\label{sec:back2} 
	
We now present basic concepts and properties of line congruences in $\R^3$. The classical theory in $\R^3$ has been given in \cite{Bianchi}, \cite{Eisenhart} and \cite{weatherburn}.

\subsection{Kummer fundamental forms}
In what follows, the space $\R^3$ is oriented by an once for all fixed orientation and it is endowed with the Euclidean inner product $\langle,\rangle$.

Let  $\cal C$  $= \lbrace x,\xi \rbrace $ be a line congruence in $\R^3$, where $\xi,x:U\subset\R^{2}\rightarrow\R^3$ are smooth functions, $\lVert \xi \rVert=1$ and $\xi$ is an immersion. The image $S=x(U)$ is called a \textit{reference set} of the congruence. If $x$ parametrizes a regular surface in $\R^3$ then $S$ is called \textit{reference surface}. Let $\alpha:I\subset\R \rightarrow U$ be a regular curve, given by $\alpha(t) = (u_1(t),u_{2}(t))$. Denote by $x(t) = x (\alpha(t))$, $ \xi (t) = \xi(\alpha(t)) $, $ q = (u_1(0),u_{2}(0)) $, $ v = u_1'(0) x_{u_1 } (q) +u_{2}'(0) x_{u_{2} } (q) =
(u_1'(0),u_2'(0))\in T_{p} S $, where $p = x(q)$. Since $\xi$ is an immersion we could also consider $ w = u_1'(0) \xi_{u_1 } (q) +u_{2}'(0) \xi_{u_{2} } (q)$ in what follows, that is, we could take the tangent space of $\xi$ using the same coordinates $(u_1'(0),u_2'(0))$.

The following quadratic forms are associated to the congruence $ \cal C $.

\begin{enumerate}
	\item[(I)] Kummer first fundamental form :
	\begin{align}
	\mathcal{I}_{p}: T_{p}S &\rightarrow \R \\ 
	v &\mapsto \mathcal{I}_{p}(v) = \mathscr{E}u_1'^2 + 2\mathscr{F}u_1'u_2' + \mathscr{G}u_2'^2,\nonumber
	\end{align}
	where $\mathscr{E} = \langle \xi_{u_1}, \xi_{u_1} \rangle$, $\mathscr{F} = \langle \xi_{u_1}, \xi_{u_2} \rangle$ and $\mathscr{G} = \langle \xi_{u_2}, \xi_{u_2} \rangle$. We denote by $\bm{\mathcal{I}}$ the associated matrix.  
	\item[(II)] Kummer second fundamental form:
	\begin{align}
	\mathcal{II}_{p}: T_{p}S &\rightarrow \R\\
	v &\mapsto \mathcal{II}_{p}(v) = \mathscr{L}u_1'^2 + \left(\mathscr{M}_{1}+\mathscr{M}_{2} \right) u_1'u_2' + \mathscr{N}u_2'^2, \nonumber
	\end{align}
	where $\mathscr{L} = -\langle x_{u_1}, \xi_{u_1}\rangle$, $\mathscr{M}_{2} = -\langle x_{u_1}, \xi_{u_2}\rangle$, $\mathscr{M}_{1} = -\langle x_{u_2}, \xi_{u_1}\rangle$ and $\mathscr{N} = -\langle x_{u_2}, \xi_{u_2}\rangle$ . We denote by $\bm{\mathcal{II}} = -D\xi^{T} Dx$ the matrix of these last coefficients, where $D\xi, Dx$ denote the Jacobian matrices of $\xi, x$ respectively.
\end{enumerate}

In order to have $\bm{\mathcal{I}}$  positive definite we suppose that $\xi$ is an immersion. Usually, the Kummer second fundamental form is defined with the opposite sign (see \cite{Eisenhart} or \cite{weatherburn}), but in this paper we work with the above definition. Despite this, there are no changes on the geometry of the congruences.
The quadratic forms defined above are called Kummer's quadratic forms of the congruence, because it was Kummer in Allgemeine Theorie der Gradlinigen Strahen system who gave the first purely mathematical treatment of line congruences, see \cite{Eisenhart}. 

\begin{defi}\normalfont\normalfont\label{congrSurface}
	The lines of the congruence which pass through a curve $C$ on the reference surface $S$ form a ruled surface  $S_C$ called \textit{surface of the congruence}. 
\end{defi}

If $C$ is given by $x (\alpha(t))$ where $\alpha(t) = (u_1(t),u_2(t))$ and $ \xi (t) = \xi(\alpha(t)) $,  the surface of the congruence $S_C$ can be written as
\begin{equation}\label{ruled}
Y(t,w) = x(t) + w \xi(t), t \in I, w \in \R,
\end{equation}
where the curve $\alpha(t)$ is called a \textit{directrix} of $S_{C}$ and for each fixed $t$ the line $L_{t}$, which pass through $\alpha(t)$ and is parallel to $\xi(t)$, is called a \textit{generator} of the ruled surface $S_C$. If $||\xi(t)||=1$, we say that $\xi(t)$ is the \textit{spherical representation} of $S_{C}$.
Suppose $||\xi(t)||=1$ and $||\xi'(t)||\neq0$. It is known (see section 3.5 in \cite{docarmo}) that there exists a curve $\beta: I \rightarrow \R^3$, contained in the ruled surface $S_C$, parametrized by
\begin{align}\label{stric}
\beta(t) = x(t) + k(t)\xi(t),
\end{align}
where $k(t) = -\dfrac{\langle x'(t), \xi'(t) \rangle}{\langle \xi'(t), \xi'(t) \rangle}$, whose tangent vector satisfies 
\begin{equation}\label{prop_striction}
\langle \beta '(t), \xi'(t) \rangle = 0.
\end{equation}
This special curve is called \textit{striction line}.
 The intersection point of a generator with the striction line is called the  \textit{central point} of the generator. Given a generator $L_{t}$ the coordinate of its central point is $k(t)$, given in (\ref{stric}) . (See figure \ref{Sg})

		\begin{figure}[h!]
	\begin{center}
			\includegraphics[scale=0.4]{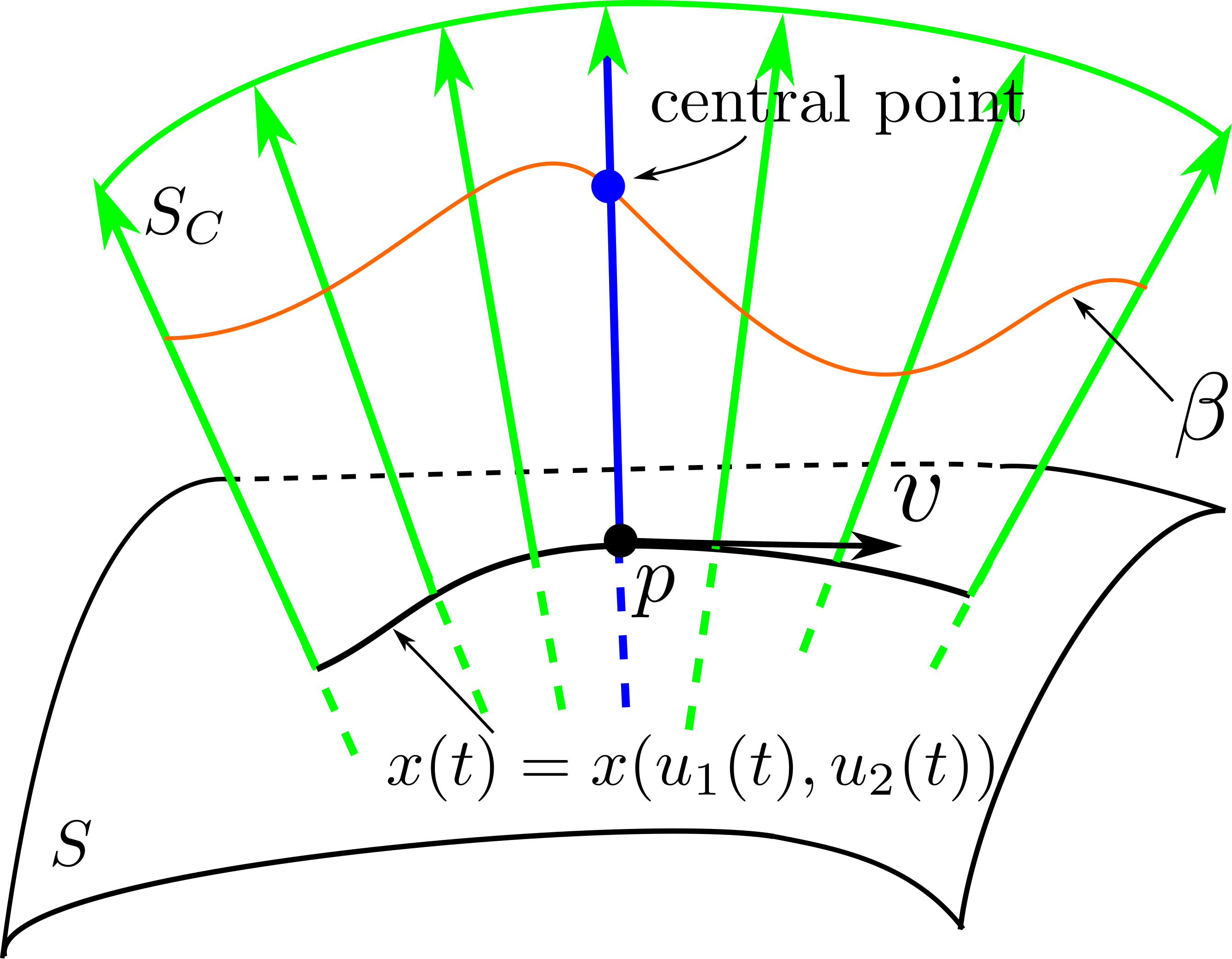}
			\caption{Surface of the congruence $S_C$, striction line $\beta$ and central point.}
		\label{Sg}

	\end{center}
\end{figure}

Let $q=(u_1(0),u_2(0))$ and note that 
\begin{align*}
k(0) &= -\dfrac{\langle x'(0), \xi'(0)\rangle }{\langle \xi'(0), \xi'(0) \rangle}\\
&= -\dfrac{\langle u_1'(0)x_{u_1}(q) + u_2'(0)x_{u_2}(q), u_1'(0)\xi_{u_1}(q) + u_2'(0)\xi_{u_2}(q) \rangle}{\langle u_1'(0)\xi_{u_1}(q) + u_2'(0)\xi_{u_2}(q), u_1'(0)\xi_{u_1}(q) + u_2'(0)\xi_{u_2}(q) \rangle}\\
&= \dfrac{\mathscr{L}u_1'^2 + \left(\mathscr{M}_{1}+\mathscr{M}_{2} \right) u_1'u_2' + \mathscr{N}u_2'^2}{\mathscr{E}u_1'^2 + 2\mathscr{F}u_1'u_2' + \mathscr{G}u_2'^2}\\
&=  \dfrac{{\mathcal{II}}_{p}}{{\mathcal{I}}_{p}},\; \text{where}\; p = x(q).
\end{align*}
If we associate to $v = \alpha'(0) = u_1'(0)x_{u_1}(q) + u_2'(0)x_{u_2}(q)$ its coordinates $(u_1'(0), u_2'(0))$, then it is possible to look at $k$ as a function defined in $T_{p}S$, i.e
\begin{equation}\label{functionk}
\mathscr{K}_{p}: T_pS\rightarrow\R,\quad \mathscr{K}_{p}(v) = \dfrac{{\mathcal{II}}_{p}(v)}{{\mathcal{I}}_{p}(v)},
\end{equation}
 which gives the coordinate of the central point of the generator $L_0$ associated to the surface of the congruence $S_C$, determined by $\alpha$, see figure \ref{Sg}. A point $p$ where the function $\mathscr{K}_{p}$ is constant is called a \textit{Kummer umbilic point}.

The directions $(du_1,du_2)$ where $\mathscr{K}_{p}$ assumes extreme values are called \textit{Kummer principal directions} . These directions are given by 
\begin{equation}\label{equaDifPrinc}
L_{\xi}du_1^2 + M_{\xi}du_1du_2 + N_{\xi}du_2^2=0,
\end{equation}
where $L_{\xi}=2\mathscr{F} \mathscr{L}-(\mathscr{M}_{1} + \mathscr{M}_{2}  )\mathscr{E} $, $M_{\xi}=2(\mathscr{G} \mathscr{L}-\mathscr{E} \mathscr{N})$ and $N_{\xi}=\mathscr{G}(\mathscr{M}_{1} + \mathscr{M}_{2})-2\mathscr{F}\mathscr{N}$ (see section 95 in \cite{weatherburn} for details).

If $p$ is not a $\xi$-umbilic point then there exist two \textit{Kummer principal directions} $l_1$ and $l_2$. The correspondent values of $\mathscr{K}_{p}$ in these directions, $\mathscr{K}_1= \mathscr{K}_{p}(l_1)$ and $\mathscr{K}_2=\mathscr{K}_{p}(l_2)$, are called \textit{Kummer principal curvatures}. The integral curves of (\ref{equaDifPrinc}) are called \textit{Kummer principal lines}. The surfaces of the congruence determined by the Kummer principal lines are called \textit{principal surfaces}.

Taking $v=l_i$ ($i=1,2$) in (\ref{functionk}), we see that the values $\mathscr{K}_1$ and $\mathscr{K}_2$ are  solutions of $ \nabla \mathcal{II} +k \nabla \mathcal{I}=0$, that is, they are roots of the following equation:
\begin{align}\label{eq:coord:limites}
&(\mathscr{E}\mathscr{F}-\mathscr{G}^2)k^2 + [-\mathscr{L}\mathscr{G}-\mathscr{E}\mathscr{N}+\mathscr{F}(\mathscr{M}_{1}+\mathscr{M}_{2}) ]k\\
 &+ \mathscr{L}\mathscr{N}- \left(\dfrac{\mathscr{M}_{1}+\mathscr{M}_{2}}{2}\right)^2 = 0.\nonumber 
\end{align}

\begin{defi}\normalfont\normalfont	A line congruence $\mathcal{C}=\{x,\xi\}$ is called a \textit{normal congruence} if there exists a surface $S'$ such that the lines of the congruence are parallel to its normal lines. We say that $\mathcal{C}$ is an \textit{exact normal congruence} if $x$ is a regular surface and $\xi$ is a normal vector to $x$.
\end{defi}

The notion of normal congruence plays an important role in classical  geometry of surfaces. It arose in an attempt to solve a problem of optimizing the cost of transport between two objects (surface) in the space, see \cite{Monge}. Proposition \ref{teonormal1} gives, in terms of Kummer's second fundamental form, a necessary and sufficient condition for the line congruence to be normal.

\begin{prop}\label{teonormal1}
	Let $\cal C$  $= \lbrace x, \xi \rbrace $. The congruence $\cal C$ is normal if, and only if, $\mathscr{M}_{1}=\mathscr{M}_{2}$, where $\mathscr{M}_{1}$ and $\mathscr{M}_{2}$ are coefficients of Kummer second fundamental form. 
\end{prop}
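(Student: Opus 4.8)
The plan is to recognize that $\mathcal C=\{x,\xi\}$ being normal is equivalent to the solvability of a simple first-order PDE for the parameter along the lines of the congruence, and that the relation $\mathscr{M}_{1}=\mathscr{M}_{2}$ is precisely its integrability condition.

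Assume first that $\mathcal C$ is normal, and let $S'$ be a surface met orthogonally by the lines of the congruence. Each line $L_q$ meets $S'$ transversally (it is a normal line of $S'$), so by the implicit function theorem the intersection point depends smoothly on $q$ and, lying on $L_q$, can be written $y(q)=x(q)+t(q)\xi(q)$ for a smooth function $t:U\to\R$ (shrinking $U$ if necessary). Orthogonality of $\xi$ to $S'$ means $\langle y_{u_i},\xi\rangle=0$ for $i=1,2$. Differentiating $y=x+t\xi$ gives $y_{u_i}=x_{u_i}+t_{u_i}\xi+t\,\xi_{u_i}$, and since $\langle\xi,\xi\rangle\equiv 1$ forces $\langle\xi_{u_i},\xi\rangle=0$, we get $0=\langle y_{u_i},\xi\rangle=\langle x_{u_i},\xi\rangle+t_{u_i}$, that is, $t_{u_i}=-\langle x_{u_i},\xi\rangle$ for $i=1,2$. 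Equality of the mixed second derivatives $t_{u_1u_2}=t_{u_2u_1}$ then reads $\partial_{u_2}\langle x_{u_1},\xi\rangle=\partial_{u_1}\langle x_{u_2},\xi\rangle$; expanding both sides and cancelling the common term $\langle x_{u_1u_2},\xi\rangle$ leaves $\langle x_{u_1},\xi_{u_2}\rangle=\langle x_{u_2},\xi_{u_1}\rangle$, i.e. $\mathscr{M}_{2}=\mathscr{M}_{1}$.

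Conversely, suppose $\mathscr{M}_{1}=\mathscr{M}_{2}$. Running the previous computation backwards shows that the $1$-form $\omega=-\langle x_{u_1},\xi\rangle\,du_1-\langle x_{u_2},\xi\rangle\,du_2$ is closed, hence exact on a simply connected neighbourhood; write $\omega=dt$ and set $y=x+t\xi$, so that $\langle y_{u_i},\xi\rangle=0$ and $\xi$ is orthogonal to $y_{u_1}$ and $y_{u_2}$. To see that $y$ is a genuine surface near a given point, replace $t$ by $t+c$ with $c$ constant: then $y_{u_i}=x_{u_i}-\langle x_{u_i},\xi\rangle\xi+(t+c)\xi_{u_i}$, both $y_{u_1}$ and $y_{u_2}$ lie in $\xi^{\perp}$, and $\langle y_{u_1}\times y_{u_2},\xi\rangle$ is a polynomial of degree $2$ in $c$ whose leading coefficient is $\det(\xi_{u_1},\xi_{u_2},\xi)\neq 0$ (because $\xi$ is an immersion into $S^2$); choosing $c$ so that this polynomial is nonzero at the point, $y$ is an immersion on a neighbourhood, and $S'=y(U)$ is met orthogonally by the congruence. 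Hence $\mathcal C$ is normal.

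The computation itself is entirely routine; the only points that deserve attention are the smooth local reduction to the form $y=x+t\xi$ in the forward direction and the choice of integration constant ensuring that $S'$ is regular in the converse direction. I expect these to be the only genuinely non-formal steps.
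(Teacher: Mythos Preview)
Your proof is correct and self-contained. The paper itself does not prove this proposition: it simply refers the reader to Proposition~5.1 of Izumiya, Saji and Takeuchi. Your argument is the classical one (found already in Eisenhart and Weatherburn): the lines of the congruence are orthogonal to a surface $y=x+t\xi$ if and only if the system $t_{u_i}=-\langle x_{u_i},\xi\rangle$ is solvable, and $\mathscr{M}_{1}=\mathscr{M}_{2}$ is precisely its integrability condition. The one step that textbook treatments sometimes gloss over---verifying that $y$ can be made an immersion by an appropriate choice of the additive constant $c$, using that $\det(\xi_{u_1},\xi_{u_2},\xi)\neq 0$ because $\xi$ is an immersion into $S^2$---you have addressed cleanly.
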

\begin{proof}
 See  proposition 5.1 in \cite{Izumiya}.
\end{proof}

It is known that if $\{x,\xi\}$ is an exact normal congruence then the Kummer principal lines of the congruence coincide with the lines of curvature of $S$ and the principal surfaces are developable. In fact, we have the next proposition.

\begin{prop}\label{Cong_sup_desen}
Let $\lbrace x,\xi \rbrace$ be an exact normal line congruence. A curve $C$ on the reference surface parametrized by $x (\alpha): I \rightarrow \R^3$, where $\alpha(t) = (u_{1}(t), u_{2}(t))$ is a smooth curve in $U$, is a  line of curvature if and only if the surface of the congruence $Y(t,w)=\alpha(t)+w\xi(t)$ is developable.  
\end{prop}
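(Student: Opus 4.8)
The plan is to reduce the developability of the surface of the congruence $S_C$ to the classical determinant criterion for ruled surfaces, and then, using that $\xi$ is the unit normal of $x$, to recognize that criterion as the differential equation of the lines of curvature of $S$.

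First I would recall the classical characterization (see Section 3.5 of \cite{docarmo}): a ruled surface $\gamma(t)+w\delta(t)$ with $\|\delta\|\equiv 1$ has identically zero Gaussian curvature --- that is, is \emph{developable}, equivalently its tangent plane is constant along each ruling --- if and only if $\det\big(\gamma'(t),\delta(t),\delta'(t)\big)\equiv 0$. This also covers the cylindrical case $\delta'\equiv 0$, and a one-line computation shows that the left side is unchanged if the directrix $\gamma$ is replaced by another curve $\gamma+\phi(t)\delta$ (with $\phi$ smooth) generating the same rulings, so the condition depends only on the ruled surface and not on the chosen directrix; if one prefers, one may take $\gamma$ to be the striction line $\beta$ of (\ref{stric}), for which $\langle\beta',\xi'\rangle=0$ by (\ref{prop_striction}). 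Taking $\gamma(t)=x(t):=x(\alpha(t))$ and $\delta(t)=\xi(t):=\xi(\alpha(t))$, the surface $S_C$ is developable if and only if
\[
\det\big(x'(t),\,\xi(t),\,\xi'(t)\big)\equiv 0 .
\]

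The next step is purely linear algebra in the tangent plane. Since $\xi$ is the unit normal field of the regular surface $S=x(U)$, differentiating $\langle\xi,\xi\rangle\equiv 1$ gives $\langle\xi_{u_i},\xi\rangle=0$, so $\xi'(t)=u_1'\xi_{u_1}+u_2'\xi_{u_2}$ lies in $T_pS$; also $x'(t)=u_1'x_{u_1}+u_2'x_{u_2}\in T_pS$, while $\xi(t)\perp T_pS$. Hence the $3\times 3$ determinant above vanishes at a value $t$ exactly when $x'(t)$ and $\xi'(t)$ are linearly dependent as vectors of the plane $T_pS$. Writing $\xi=N\circ x$ with $N$ the Gauss map of $S$, the Weingarten equations give $\xi'(t)=dN_p\big(x'(t)\big)=-S_p\big(x'(t)\big)$, where $S_p\colon T_pS\to T_pS$ is the shape operator of $S$ at $p=x(\alpha(t))$. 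Therefore $\det\big(x'(t),\xi(t),\xi'(t)\big)=0$ precisely when $x'(t)$ is an eigenvector of $S_p$, i.e. a principal direction of $S$ at $p$ (the degenerate case $\xi'(t)=0$ is included: then $x'(t)$ is trivially an eigenvector, with eigenvalue $0$).

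Chaining the two equivalences, $S_C$ is developable if and only if $\det\big(x'(t),\xi(t),\xi'(t)\big)\equiv 0$, which holds if and only if $x'(t)$ is a principal direction of $S$ for every $t$, which in turn is exactly the statement that $C=x\circ\alpha$ is a line of curvature of $S$. The only non-formal ingredient is the first equivalence --- the classical flatness criterion for ruled surfaces, together with its independence of the directrix --- so that is where I would be most careful to quote the precise statement in \cite{docarmo} rather than reprove it. As an alternative one could substitute the exact normal congruence values $\mathscr{L}=e$, $\mathscr{M}_1=\mathscr{M}_2=f$, $\mathscr{N}=g$, with $\mathscr{E},\mathscr{F},\mathscr{G}$ the coefficients of the third fundamental form, into (\ref{equaDifPrinc}) and simplify using the Cayley--Hamilton relation $\mathrm{III}=2H\,\mathrm{II}-K\,\mathrm{I}$ for the shape operator; this again reduces (\ref{equaDifPrinc}) to the classical line-of-curvature equation, but the computation is longer.
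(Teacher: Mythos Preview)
Your proof is correct and follows essentially the same route as the paper: both reduce developability to the scalar triple product condition $[x',\xi',\xi]=0$ from \cite{docarmo}, observe that $x'$ and $\xi'$ lie in $T_pS$ while $\xi$ is normal, and conclude that the condition is equivalent to $\xi'$ being a multiple of $x'$, which by Rodrigues' formula (equivalently, your shape-operator formulation) characterizes lines of curvature. Your version is a bit more explicit about the linear-algebra step and adds the remark on independence of the directrix, but there is no substantive difference in approach.
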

\begin{proof}
Let $S_C$ be the surface of the congruence parametrized by $Y (t, w) = x(t) + w\xi(t)$. It is known that the ruled surface $S_C$ is developable if and only if $ \left[x', \xi', \xi \right]=0$.
We know that $\lVert \xi \rVert = 1$ and $\langle x', \xi \rangle = 0$, thus $\left[x', \xi', \xi \right]=0$ if and only if $\xi'(t) = k(t)x'(t)$ and from Rodrigues' curvature formula (see section 3.2 in \cite{docarmo}), $\alpha$ is a line of curvature.
\end{proof}

\begin{teo} \label{Sc:developable} There exist two developable surfaces of the congruence, not necessarily real, passing through each congruence line.
\end{teo}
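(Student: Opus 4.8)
The plan is to convert the developability condition for a surface of the congruence into a binary quadratic equation on tangent directions at a point of the reference set, and then to count and integrate its solutions. Fix $q=(u_1(0),u_2(0))\in U$ and let $L$ be the congruence line through $p=x(q)$ with direction $\xi(q)$. Every surface of the congruence containing $L$ is of the form $Y(t,w)=x(\alpha(t))+w\,\xi(\alpha(t))$ for some regular curve $\alpha(t)=(u_1(t),u_2(t))$ in $U$ with $\alpha(0)=q$; as recalled in the proof of Proposition~\ref{Cong_sup_desen} (see also \cite{docarmo}), such a ruled surface is developable if and only if $[\,x'(t),\xi'(t),\xi(t)\,]=0$ for all $t$. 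Writing $x'=u_1'x_{u_1}+u_2'x_{u_2}$ and $\xi'=u_1'\xi_{u_1}+u_2'\xi_{u_2}$ (partials at $\alpha(t)$) and using that the determinant is multilinear while $\xi$ is independent of $(u_1',u_2')$, one gets
\begin{equation*}
[\,x',\xi',\xi\,]=A\,u_1'^2+B\,u_1'u_2'+C\,u_2'^2,
\end{equation*}
where $A=[x_{u_1},\xi_{u_1},\xi]$, $B=[x_{u_1},\xi_{u_2},\xi]+[x_{u_2},\xi_{u_1},\xi]$ and $C=[x_{u_2},\xi_{u_2},\xi]$. Hence a surface of the congruence through $L$ is developable exactly when its directrix satisfies the quadratic differential equation
\begin{equation*}
A\,du_1^2+B\,du_1\,du_2+C\,du_2^2=0.\tag{$\star$}
\end{equation*}

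Next I would study $(\star)$ at $q$. Because $\xi$ is an immersion, $\{\xi_{u_1},\xi_{u_2},\xi\}$ is a basis of $\R^3$, and expanding $x_{u_1},x_{u_2}$ in it shows that $A,B,C$ cannot all vanish except on a special locus (where a certain $2\times2$ matrix built from the components of $x_{u_1},x_{u_2}$ is a multiple of the identity). Off that locus, $(\star)$ is a genuine quadratic in the direction $[du_1:du_2]$, so by the fundamental theorem of algebra it has exactly two roots counted with multiplicity: two directions at $q$ that are real and distinct, real and coincident, or complex conjugate according to the sign of $B^2-4AC$ --- this is the ``not necessarily real'' in the statement. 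Finally, on the open set where $B^2-4AC\neq0$ each of the two roots depends smoothly on $(u_1,u_2)$ and defines a direction field; its integral curve through $q$, furnished by Picard's theorem, is a curve $\alpha$ with $\alpha(0)=q$ along which $(\star)$ holds identically, so the corresponding surface of the congruence is developable and contains $L$. This gives the two developable surfaces through $L$.

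The step I expect to be the main obstacle is the honest treatment of the degenerate strata, which the classical sources \cite{Eisenhart}, \cite{weatherburn} also must address: along $B^2-4AC=0$ the two developable surfaces coincide (a double solution), while along the locus where $A,B,C$ vanish identically every surface of the congruence through $L$ is developable, so the phrase ``two developable surfaces'' has to be read as ``two, counted with multiplicity and allowing complex roots'', with a complex root producing a developable surface only in the complexified sense. A minor point worth flagging is that nowhere does the argument use regularity of $x$: the statement holds for a smooth map $x$ and an immersion $\xi$, as assumed throughout this section.
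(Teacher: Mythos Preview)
Your proposal is correct and is precisely the classical argument to which the paper defers: the paper's proof is simply ``See section 97 in \cite{weatherburn}'', and that section establishes the result by reducing developability to the binary quadratic $[x_{u_1},\xi_{u_1},\xi]\,du_1^2+\big([x_{u_1},\xi_{u_2},\xi]+[x_{u_2},\xi_{u_1},\xi]\big)\,du_1\,du_2+[x_{u_2},\xi_{u_2},\xi]\,du_2^2=0$ and counting its (possibly complex or coincident) roots, exactly as you do. Your explicit treatment of the degenerate strata and the remark that only the immersion hypothesis on $\xi$ is used are welcome additions beyond what the cited reference makes explicit.
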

\begin{proof}
 See section 97 in \cite{weatherburn}.
\end{proof}

A central point is called a \textit{focal point} when the surface of the congruence is developable. 

Let $Y (t, w) = x (\alpha_i(t)) + w\xi(\alpha_i(t))$ $(i=1,2)$ be the developable surfaces of the congruence given by theorem \ref{Sc:developable}. It is known that the coordinate $\rho$ of a focal point  satisfies the following quadratic equation (see section 97 in \cite{weatherburn})
\begin{equation}\label{eq:focal_coord}
    (-\mathscr{F}^2+\mathscr{G}\mathscr{E})\rho^2+(\mathscr{F}\mathscr{M}_{2}-\mathscr{N}\mathscr{E}+\mathscr{F}\mathscr{M}_{1}-\mathscr{G}\mathscr{L})\rho-\mathscr{M}_{2}\mathscr{M}_{1}+\mathscr{N}\mathscr{L}=0.
\end{equation}

 It is also known that the Kummer principal curvatures, $\mathscr{K}_1$ and $\mathscr{K}_2$, and the coordinates $\rho_1$ and $\rho_2$ of the focal points  satisfy the following system
\begin{align*}
\begin{cases}
\rho_{1} + \rho_{2} = \mathscr{K}_1+\mathscr{K}_2\\ 
\left(\mathscr{K}_1-\mathscr{K}_2\right)^2 - \left(\rho_{1}-\rho_{2}\right)^2 = \dfrac{\left(\mathscr{M}_{1}-\mathscr{M}_{2}\right)^2}{{\mathscr{E}}\mathscr{G}-\mathscr{F}^2}
\end{cases}.
\end{align*}

From the above equations one can see that the focal points $P_1$, $P_2$ and the limit points $P_{\xi}^1$, $P_{\xi}^2$ have the same midpoint and they are equal if and only if the congruence is normal. See figure \ref{Congruence surfaces}.

\begin{figure}[ht!]
	\begin{center}
		\def\svgwidth{0.6\textwidth}
		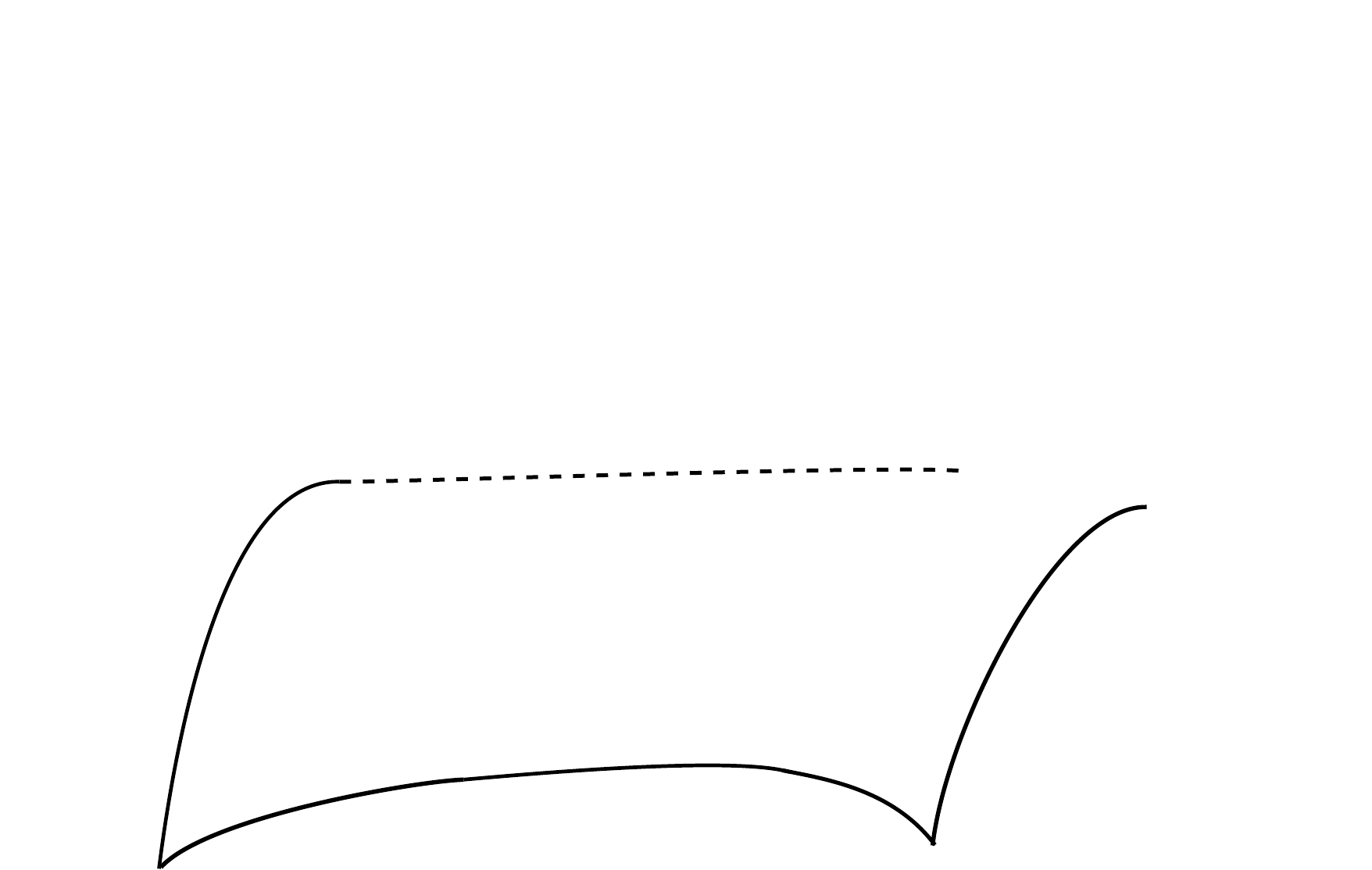
		\caption{Principal surfaces and developable surfaces.}
		\label{Congruence surfaces}
	\end{center}
\end{figure}

\section{Preliminaries on Frontals}\label{sec:back3} 
A smooth map $x: U \rightarrow \R^3$ defined in an open set $U\subset \R^2$ is called a \textit{frontal} if for all $q \in U$, there is a unit normal vector field $\nu: V_{q} \rightarrow \R^3$ along $x$, where $V_{q} \subset U$ is open and $q \in V_{q}$. This means, $\lVert \nu \rVert = 1$ and that $\nu$ is orthogonal to the partial derivatives of $x$ for each point $q \in V_{q}$. When the singular set $\Sigma(x) = \lbrace u \in U: x\; \text{is not immersive at u}\rbrace$ has empty interior, we call $x$ a \textit{proper frontal}. Since $\Sigma(x)$ is closed, this is equivalent to have $U \setminus\Sigma(x)$ being dense and open in $U$.

\begin{defi}\normalfont\normalfont
	We call \textit{moving basis} a smooth map $\Omega: U \rightarrow \cal{M}_{3\times 2}(\R)$ in which the columns $w_{1}, w_{2}: U \rightarrow \R^3$ of the matrix $\Omega =  \begin{pmatrix}
	w_{1} & w_{2}
	\end{pmatrix} $ are linearly independent vector fields.
\end{defi}

\begin{defi}\normalfont\normalfont
	We call a \textit{tangent moving basis} of $x$ a moving basis $\Omega = \left( w_{1}, w_{2} \right)$, such that $x_{u_{1}}, x_{u_{2}} \in \left[ w_{1}, w_{2} \right]_{\R}$, where $\left[\;, \right]_{\R}$ denotes the linear span vector space.
\end{defi}

It is known that a smooth map $x: U \to \R^3$ is a frontal if and only if there exist tangent moving bases of $x$ locally. Since we are interested
in local properties of frontals, we assume that we have a global
tangent moving basis $\Omega$ for $x$. Then, given a frontal, if we look at its Jacobian matrix as a smooth map $Dx: U \to M_{3 \times 2}(\R)$, we can always decompose $Dx = \Omega \Lambda_{\Omega}^{T}$, where $\Omega$ is a tangent moving basis and $\Lambda_{\Omega}: U \to M_{2 \times 2}(\R)$ is such that $\Sigma(x) = \lambda_{\Omega}^{-1}(0)$, where $\lambda_{\Omega} = \det \Lambda_{\Omega}$ (for details, see section 3 in \cite{Medina}).

Let $x: U \rightarrow \R^3$ be a frontal, $\Omega = \begin{pmatrix}
w_{1} & w_{2}
\end{pmatrix}$ a tangent moving basis and denote by $n = \dfrac{w_{1}\times w_{2}}{\lVert w_{1} \times w_{2} \rVert}$ the unit normal vector field induced by $\Omega$, which is also a frontal. We set the matrices
\begin{align*}
\bI_{\Omega} &:= \Omega^{T}\Omega, \\
\bII_{\Omega} &:= -\Omega^{T}Dn,\\
\mu_{\Omega} &:= -\bII_{\Omega}^{T}\bI_{\Omega}^{-1},\\
\alpha_{\Omega} &:= \mu_{\Omega}adj(\Lambda_{\Omega}).
\end{align*}
\begin{obs}\normalfont\normalfont
	With notation as above, if $\Omega$ is a tangent moving base of x, we have the decomposition $Dx = \Omega \Lambda_{\Omega}^T$, then $\Lambda_{\Omega} =Dx^{T}\Omega\mathbf{I}_{\Omega}^{-1}$, namely $\Lambda_{\Omega}$ is completely determined by $x$ and $\Omega$, therefore from now on, it will denote this matrix valued map. Also we write $P_{\Omega} = \left[ w_1, w_2 \right]_{\R}$ the plane generated by $w_1$ and $w_2$. Note that given two tangent moving basis $\Omega$ and $\tilde{\Omega}$ of a proper frontal, we have $P_{\Omega} = P_{\tilde{\Omega}}$. \end{obs}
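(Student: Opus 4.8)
The remark packages two independent assertions: the closed formula $\Lambda_{\Omega}=Dx^{T}\Omega\,\bI_{\Omega}^{-1}$ (with the attendant uniqueness of $\Lambda_{\Omega}$), and the equality $P_{\Omega}=P_{\tilde{\Omega}}$ for any two tangent moving bases $\Omega,\tilde{\Omega}$ of a proper frontal $x$. The plan is to dispatch the first by pure linear algebra and the second by a density argument on the regular set; the point is that the hypotheses ``tangent moving basis'' and ``proper frontal'' are exactly what make these two steps work.

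For the formula I would start from the decomposition $Dx=\Omega\Lambda_{\Omega}^{T}$, which is available since $\Omega$ is a tangent moving basis (each column of $Dx$ lies in $[w_{1},w_{2}]_{\R}$). Because $w_{1},w_{2}$ are pointwise linearly independent, the Gram matrix $\bI_{\Omega}=\Omega^{T}\Omega$ is symmetric positive definite, hence invertible with smooth inverse. Left-multiplying $Dx=\Omega\Lambda_{\Omega}^{T}$ by $\Omega^{T}$ gives $\Omega^{T}Dx=\bI_{\Omega}\Lambda_{\Omega}^{T}$, so $\Lambda_{\Omega}^{T}=\bI_{\Omega}^{-1}\Omega^{T}Dx$; transposing and using $\bI_{\Omega}^{T}=\bI_{\Omega}$ yields $\Lambda_{\Omega}=Dx^{T}\Omega\,\bI_{\Omega}^{-1}$. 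The same computation shows that any $M$ with $Dx=\Omega M^{T}$ must satisfy $M^{T}=\bI_{\Omega}^{-1}\Omega^{T}Dx$, so $\Lambda_{\Omega}$ is unique and depends only on $x$ and $\Omega$ — alternatively, uniqueness is immediate from the injectivity of $\Omega$ as a linear map $\R^{2}\to\R^{3}$ at each point. I expect no obstacle here.

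For $P_{\Omega}=P_{\tilde{\Omega}}$, I would first argue on $U\setminus\Sigma(x)$: there $x_{u_{1}},x_{u_{2}}$ are linearly independent and both lie in the $2$-dimensional plane $P_{\Omega}$, forcing $P_{\Omega}=[x_{u_{1}},x_{u_{2}}]_{\R}$, and likewise $P_{\tilde{\Omega}}=[x_{u_{1}},x_{u_{2}}]_{\R}$, so the two planes agree off $\Sigma(x)$. To spread this to all of $U$ I would represent each plane by its orthogonal projection matrix $\pi_{\Omega}:=\Omega\,\bI_{\Omega}^{-1}\Omega^{T}\in M_{3\times 3}(\R)$, which is smooth in $q$; since $x$ is a proper frontal, $U\setminus\Sigma(x)$ is dense, $\pi_{\Omega}=\pi_{\tilde{\Omega}}$ there, and continuity then gives $\pi_{\Omega}=\pi_{\tilde{\Omega}}$ on all of $U$, hence $P_{\Omega}=P_{\tilde{\Omega}}$ everywhere. (One could instead encode the planes by their unit normals $n,\tilde{n}$, but then one must track the sign ambiguity $n=\pm\tilde{n}$ across components of $U\setminus\Sigma(x)$, so the projection-matrix version is cleaner.) The one delicate point is precisely this density step, and it is where properness is indispensable: on an open set of singular points where $x$ had constant rank one, the tangent line would be determined but not the plane, and distinct tangent moving bases could span different planes there — so the write-up should isolate ``$U\setminus\Sigma(x)$ dense'' as the hypothesis doing the work.
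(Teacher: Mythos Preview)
Your argument is correct on both counts. The paper records this statement as a remark and does not supply a proof at all; your write-up therefore fills in precisely the details the authors leave implicit. The derivation of $\Lambda_{\Omega}=Dx^{T}\Omega\,\bI_{\Omega}^{-1}$ via left multiplication by $\Omega^{T}$ and inversion of $\bI_{\Omega}$ is the natural computation, and your density argument for $P_{\Omega}=P_{\tilde{\Omega}}$ --- equality on the regular set followed by continuity of the orthogonal projections --- is a clean way to handle it, correctly isolating properness of the frontal as the hypothesis that makes the extension to $\Sigma(x)$ go through.
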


In this paper, sometimes we deal with a pair of frontals $x$ and $\xi$, therefore in order to distinguish the matrix valued maps that have the same role of $\Lambda_{\Omega}$, we denote $\Delta_{\bar{\Omega}}$ the matrix such that $D\xi=\bar{\Omega}\Delta^T_{\bar{\Omega}}$ and $\delta_{\bar{\Omega}}:=det(\Delta_{\bar{\Omega}})$ where $\bar{\Omega}$ is a tangent moving basis of $\xi$. In particular, if we take $\xi=n$ the normal vector field induced by a tangent moving basis $\Omega$ of $x$, then is satisfied that $Dn=\Omega\mu_{\Omega}^T$ (see \cite{Medina}), that is, $\Delta_{\Omega}=\mu_{\Omega}$.

\begin{defi}\normalfont
	Let $x: U \rightarrow \R^3$ be a frontal and $\Omega$ a tangent moving basis of $x$, we define the $\Omega$-\textit{relative curvature} $K_{\Omega}:= \det(\mu_{\Omega})$ and the $\Omega$-\textit{relative mean curvature} $H_{\Omega}:= -\frac{1}{2}tr(\alpha_{\Omega})$, where $tr()$ is the trace and $adj()$ is the adjoint of a matrix. Also we call the functions $k_{1\Omega}:=H_{\Omega}-\sqrt{H_{\Omega}^2-\lambda_{\Omega} K_{\Omega}}$ and $k_{2\Omega}:=H_{\Omega}+\sqrt{H_{\Omega}^2-\lambda_{\Omega} K_{\Omega}}$ the $\Omega$-{\it relative principal curvatures}.  
\end{defi}
All these functions defined before play an important role in the differential geometry of frontals and are related with the classical Gaussian curvature, mean curvature and principal curvatures (see \cite{Medina,Medina2}). Eventually, these will appear at the end of the next section, where we consider the line congruence $\{x,\xi\}$ with $\xi=n$. 

\section{Line congruence on frontals}\label{sec:back4}
Most of the results in Kummer's theory are proved for congruences $\lbrace x, \xi \rbrace$, where $x: U \to \R^3$ is a regular surface and $\xi: U \to \R^3$ is an immersion. Our goal is to extend this theory to the case of line congruences $\lbrace x, \xi \rbrace$, where $x$ is a smooth map and $\xi$ is a proper frontal. We mentioned in section 2 that in the regular case it is possible to work using the tangent space of $x$ or the tangent space of $\xi$. Here, we replace these planes by the plane $P_{\Omega}$,  where $\Omega $ is a tangent moving basis of $\xi$.

\begin{ex}\normalfont
	Let $x: U \to \R^3$, where $U = (-1/10, 1/10) \times (-4, 4)$, defined by
$x = (u_{1}, u_{2}^2, {4/15}u_{1}u_{2}^5 + 1/2u_{1}^3u_{2}^4 + u_{1}u_{2}^2)$ (see figure \ref{frontalnp1}) and $\xi = \dfrac{1}{ \rho^{7/4}
}\left(\dfrac{-3\sqrt{3}}{8}\xi_1, \dfrac{9\sqrt{3}}{8}\xi_2, \dfrac{\sqrt{3}}{240}\xi_3 \right)$, where
\begin{align*}
\xi_1 &= 216\,{u_{1}}^{6}{u_{2}}^{4}-189\,{u_{1}}^{4}{u_{2}}^{5}+
66\,{u_{1}}^{2}{u_{2}}^{6}+16\,{u_{2}}^{7}+324\,{u_{1}}^{4}{u_{2}}^{2}\\
&+9\,{u_{1}}^{2}{u_{2}}^{3}+48\,{u_{2}}^{4}+108\,{u_{1}}^{2}+36\,u_{2}
\\
\xi_2 &= \left( 216\,{u_{1}}^{4}{u_{2}}^{4}+87\,{u_{1}}^{2}{u_{2}}^{5}-16\,{u_
	{2}}^{6}+252\,{u_{1}}^{2}{u_{2}}^{2}+24\,{u_{2}}^{3}+72 \right) {u_{2}
}^{2}\\
\xi_3 &= 145800\,{u_{1}}^{8}{u_{2}}^{8}+35721\,{u_{1}}^{6}{u_{2}}^{9}+25326\,{u
	_{1}}^{4}{u_{2}}^{10}+4896\,{u_{1}}^{2}{u_{2}}^{11} +6480\\ &+277020\,{u_{1}}^{6
}{u_{2}}^{6}+896\,{u_{2}}^{12}+114129\,{u_{1}}^{4}{u_{2}}^{7}+39204\,{
	u_{1}}^{2}{u_{2}}^{8}+5088\,{u_{2}}^{9}\\ &+179820\,{u_{1}}^{4}{u_{2}}^{4}
+88938\,{u_{1}}^{2}{u_{2}}^{5}+12096\,{u_{2}}^{6}+48600\,{u_{1}}^{2}{u
	_{2}}^{2}+14040\,{u_{2}}^{3}\\
\rho &= 54\,{u_{1}}^{4}{u_{2}}^{4}+9\,{u_{1}}^{2}{u_{2}}^{5}+4\,{u_{2}}^{6}+54
\,{u_{1}}^{2}{u_{2}}^{2}+12\,{u_{2}}^{3}+9.		 
\end{align*}
Then, $\lbrace x, \xi \rbrace$ is a special example of line congruence for which $x$ and $\xi$ are frontals and $\xi$ is an equiaffine transversal vector field different from the unit normal vector field of $x$. By equiaffine, we mean a vector field $\xi$ such that its derivatives $\xi_{u_1}$ and $\xi_{u_2}$ belong to $P_{\Omega}$, for all tangent moving basis $\Omega$. This is an important class of line congruences which will be further discussed in future works. For more information about equiaffine geometry see \cite{nomizu1994affine}.
\begin{figure}[h!]
	\includegraphics[scale=0.27]{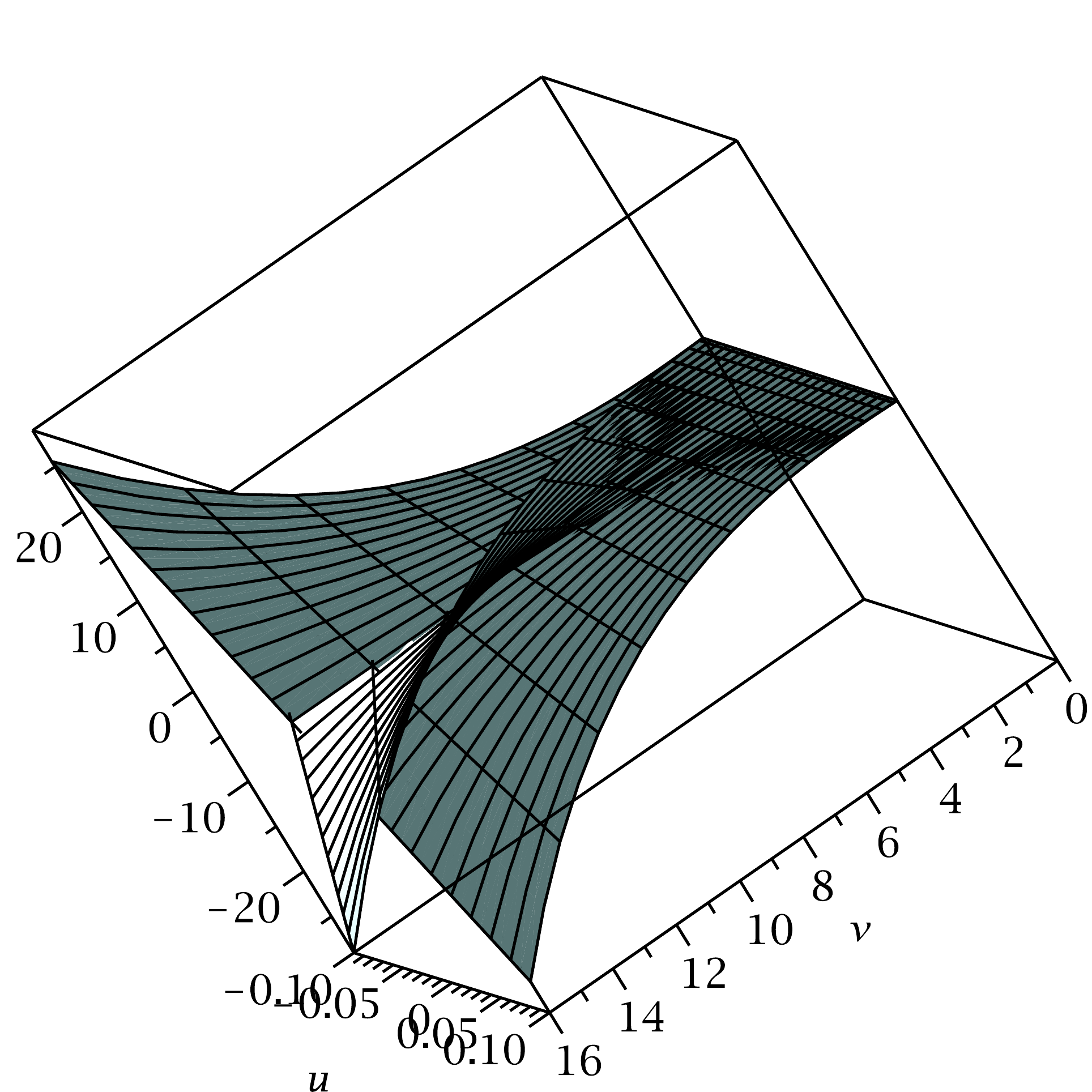}
	\includegraphics[scale=0.27]{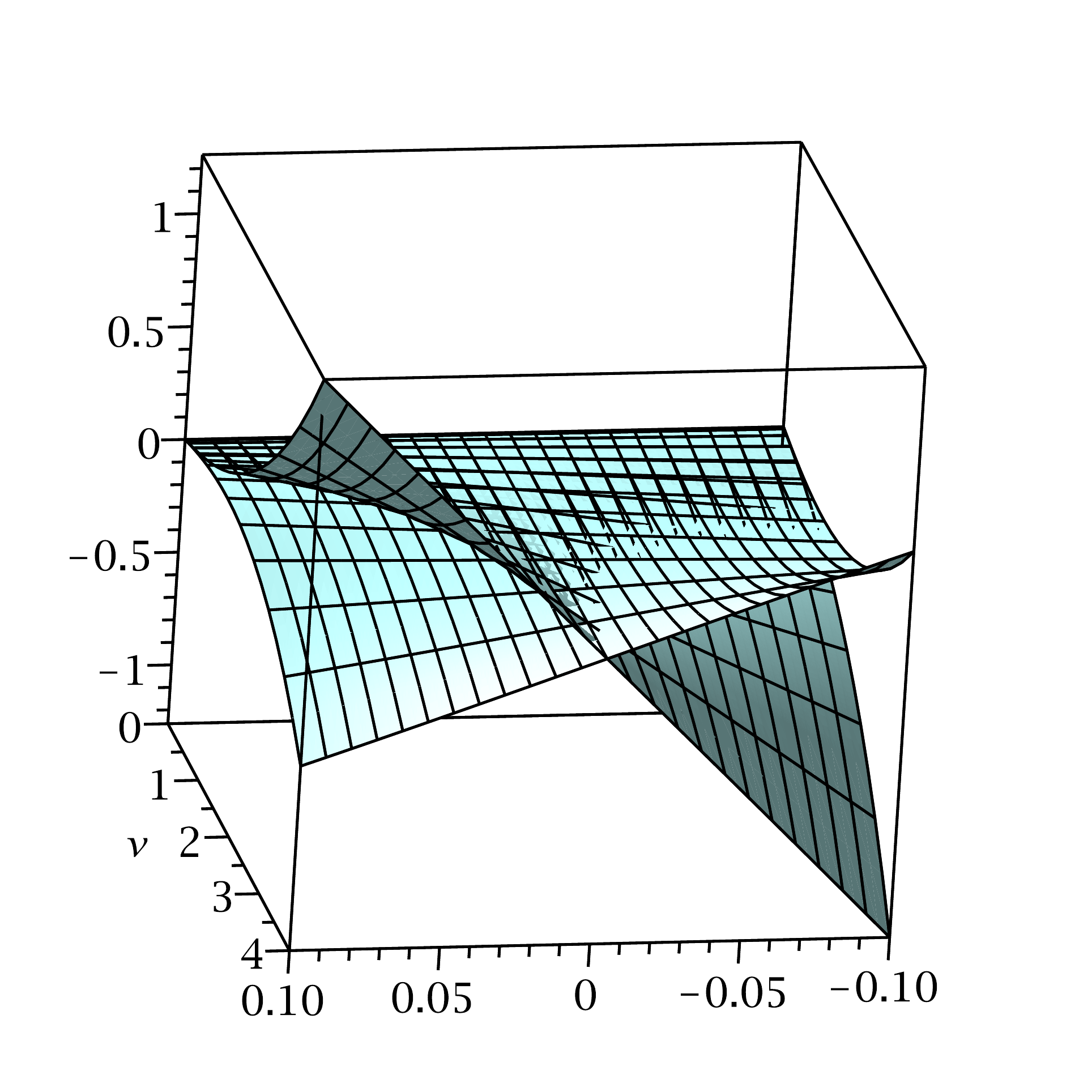}
	\caption{Frontal which admits an equiaffine transversal vector field different from its unit normal vector field.}
	\label{frontalnp1}
\end{figure}
\end{ex}

\begin{obs}\normalfont
	Along this paper, we consider several times that $\xi$ is a unitary frontal. In terms of family of lines, there is no difference considering $\xi$ unitary or not,  but we work with this restriction in order to  extend some concepts of Kummer's theory to this context. Next, we define the relative quadratic forms associated to a line congruence. \end{obs}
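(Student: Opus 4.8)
The only mathematically verifiable content of this remark is the assertion that the pairs $\lbrace x, \xi\rbrace$ and $\lbrace x, \widetilde{\xi}\rbrace$, with $\widetilde{\xi} := \xi/\lVert \xi\rVert$, determine one and the same $2$-parameter family of lines; the closing sentence only records a normalization convention and carries nothing to prove. The plan is therefore to exploit that a line in $\R^3$ is determined by a point together with a direction \emph{up to a nonzero scalar}, so that rescaling $\xi$ by a positive function only reparametrizes each generator without altering it as a set.

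First I would observe that, by the very definition of a line congruence, $\xi$ takes values in $\R^3\setminus\lbrace\bm{0}\rbrace$; hence $\lVert\xi\rVert\colon U\to\R$ is smooth and strictly positive, and $\widetilde{\xi}=\xi/\lVert\xi\rVert$ is a well-defined smooth unit vector field. Next, for each fixed $(u_1,u_2)\in U$ the generator through $x(u_1,u_2)$ computed with $\widetilde{\xi}$ coincides with the one computed with $\xi$, because
\begin{equation*}
\lbrace x(u_1,u_2)+t\,\widetilde{\xi}(u_1,u_2)\ :\ t\in\R\rbrace=\lbrace x(u_1,u_2)+s\,\xi(u_1,u_2)\ :\ s\in\R\rbrace,
\end{equation*}
the identification being the bijection $s=t/\lVert\xi(u_1,u_2)\rVert$ of $\R$ onto itself. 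In the language of the map $F_{(x,\xi)}(u_1,u_2,t)=x(u_1,u_2)+t\,\xi(u_1,u_2)$ from the Introduction, this says precisely that $F_{(x,\widetilde{\xi})}=F_{(x,\xi)}\circ\Phi$ for the fibrewise diffeomorphism $\Phi(u_1,u_2,t)=(u_1,u_2,t/\lVert\xi\rVert)$; since $\Phi$ is onto, the two maps have the same image, i.e. the same family of lines. This is the entire proof of the provable kernel.

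Finally I would make explicit what the remark is \emph{not} claiming, since this is where a reader could be misled. Passing from $\xi$ to $\widetilde{\xi}$ does change $\xi$ as a frontal: a unit normal field $\nu$ of $\xi$ need not be normal to $\widetilde{\xi}$ (one computes $\langle\nu,\widetilde{\xi}_{u_i}\rangle=-\langle\xi,\xi_{u_i}\rangle\langle\nu,\xi\rangle/\lVert\xi\rVert^{3}$, which is generally nonzero), so the associated tangent moving basis and the matrices $\Delta_{\bar{\Omega}},\delta_{\bar{\Omega}}$ are not preserved. Thus the invariance asserted here is strictly at the level of the lines, and it is exactly this invariance that licenses the standing convention $\lVert\xi\rVert=1$ used throughout when defining the Kummer forms and the striction line. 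The main obstacle is consequently not in the short scaling argument---there is none---but in resisting the temptation to overstate it: one must keep the claim confined to the family of lines and not to the frontal invariants of $\xi$.
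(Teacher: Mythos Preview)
Your argument is correct, and indeed the only provable content here is the invariance of the family of lines under rescaling of $\xi$, which you establish cleanly via the reparametrization $s=t/\lVert\xi\rVert$. The paper, however, offers no proof of this remark at all: it is stated as a self-evident observation and a declaration of convention, with nothing further. So there is no ``paper's own proof'' to compare against; you have simply supplied the one-line justification the authors left implicit.

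Your closing paragraph, on the other hand, goes beyond the remark. The observation that normalizing $\xi$ can alter its frontal structure (the tangent moving basis, $\Delta_{\bar\Omega}$, etc.) is correct and worth knowing, but the paper does not raise or address this point here---it is your own commentary, not a comparison with anything in the text. If you are writing this up as a proof of the remark as stated, the first two paragraphs suffice; the third is an aside.
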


\begin{defi}\normalfont
Let $\cal{C} = \lbrace x, \xi \rbrace$ be a line congruence, where $\xi$ is a frontal and let $\Omega$ be a tangent moving basis of $\xi$. If $\Omega = \begin{pmatrix}
w_{1} & w_{2}
\end{pmatrix}$, we define the following quadratic forms: 
\begin{equation}
{\mathcal{I}}_{\Omega}(v) = \mathscr{E}_{\Omega} b_1^2 + 2\mathscr{F}_{\Omega}b_1b_2 + \mathscr{G}_{\Omega}b_2^2,
\end{equation}
where $\mathscr{E}_{\Omega} = \langle w_1, w_1 \rangle$, $\mathscr{F}_{\Omega} = \langle w_1, w_2 \rangle$, $\mathscr{G}_{\Omega} = \langle w_2, w_2 \rangle$, $v\in P_{\Omega}$ and $(b_1,b_2)$ are the coordinates of $v$ in the basis $w_1, w_2$. This form is called the \textit{ $\Omega$-Kummer first fundamental form} of $\cal{C}$ and we denote by $\bm{\mathcal{I}}_{\Omega}:=\Omega^T\Omega$ its associated matrix.
\begin{equation}
{\mathcal{II}}_{\Omega}(v) = \mathscr{L}_{\Omega}b_1^2 + \left(\mathscr{M}_{1\Omega}+\mathscr{M}_{2\Omega} \right) b_1b_2 + \mathscr{N}_{\Omega}b_2^2,
\end{equation}
where $\mathscr{L}_{\Omega} = -\langle x_{u_1}, w_1 \rangle$, $\mathscr{M}_{2\Omega} = -\langle x_{u_1}, w_2 \rangle$, $\mathscr{M}_{1\Omega} = -\langle x_{u_2}, w_1 \rangle$ and $\mathscr{N}_{\Omega} = -\langle x_{u_2}, w_2 \rangle $. This form is called the \textit{ $\Omega$-Kummer second fundamental form} of $\cal{C}$ and we denote by $\bm{\mathcal{II}}_{\Omega}:= -\Omega^{T}Dx$ the matrix of these last coefficients.
\end{defi}

\begin{obs}\normalfont
Note that ${\mathcal{I}}_{\Omega}$ is a positive-definite quadratic form.
\end{obs}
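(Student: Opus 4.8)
The plan is to recognize ${\mathcal{I}}_{\Omega}$ as simply the restriction of the Euclidean inner product to the plane $P_{\Omega}$, expressed in the basis $\{w_1,w_2\}$. First I would note that, for $v = b_1 w_1 + b_2 w_2 \in P_{\Omega}$, bilinearity of $\langle\,,\,\rangle$ gives
\[
\langle v,v\rangle = \langle w_1,w_1\rangle b_1^2 + 2\langle w_1,w_2\rangle b_1b_2 + \langle w_2,w_2\rangle b_2^2 = \mathscr{E}_{\Omega}b_1^2 + 2\mathscr{F}_{\Omega}b_1b_2 + \mathscr{G}_{\Omega}b_2^2 = {\mathcal{I}}_{\Omega}(v),
\]
so that ${\mathcal{I}}_{\Omega}(v) = \lVert v\rVert^2 \geq 0$ for every $v\in P_{\Omega}$; equivalently, $\bm{\mathcal{I}}_{\Omega} = \Omega^T\Omega$ is the Gram matrix of $w_1,w_2$.

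Next I would establish definiteness. Since ${\mathcal{I}}_{\Omega}(v) = \lVert v\rVert^2$, the form vanishes only when $v = b_1 w_1 + b_2 w_2 = 0$, and because $w_1,w_2$ are linearly independent (being the columns of a moving basis) this forces $(b_1,b_2) = (0,0)$. If a matrix argument is preferred, I would instead invoke Sylvester's criterion: $\mathscr{E}_{\Omega} = \lVert w_1\rVert^2 > 0$, while by the Cauchy--Schwarz inequality together with $w_1 \not\parallel w_2$,
\[
\det \bm{\mathcal{I}}_{\Omega} = \mathscr{E}_{\Omega}\mathscr{G}_{\Omega} - \mathscr{F}_{\Omega}^2 = \lVert w_1\rVert^2\lVert w_2\rVert^2 - \langle w_1,w_2\rangle^2 = \lVert w_1\times w_2\rVert^2 > 0 .
\]

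There is no genuine obstacle here: the claim is immediate from the definition of a tangent moving basis. The one point worth emphasizing --- and presumably the reason the remark is recorded --- is that, in contrast with the classical Kummer first fundamental form $\mathcal{I}_p$ of Section \ref{sec:back2}, whose positive definiteness relies on $\xi$ being an immersion, the positive definiteness of ${\mathcal{I}}_{\Omega}$ needs no regularity hypothesis on $\xi$ whatsoever: it holds throughout $U$, including on the singular set $\Sigma(\xi)$, because a tangent moving basis of a frontal consists of everywhere linearly independent vector fields by definition.
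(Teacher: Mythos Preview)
Your argument is correct: recognizing that ${\mathcal{I}}_{\Omega}(v)=\lVert v\rVert^2$ for $v=b_1w_1+b_2w_2$, together with the linear independence of $w_1,w_2$, is exactly what makes the form positive definite. The paper records this remark without proof, so your write-up simply supplies the (immediate) details behind a statement the authors considered self-evident; your closing comment on why this holds even on $\Sigma(\xi)$, unlike the classical $\mathcal{I}_p$, is a nice gloss on the remark's purpose.
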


	Let $\cal{C} = \lbrace x, \xi \rbrace$ be a line congruence, where $\xi: U \to S^{2}$ is a frontal and let $\Omega$ be a tangent moving basis of $\xi$. Define the function $\mathscr{K}_{q}^{\Omega}: \R^2 \rightarrow \R$, given by
\begin{align}\label{funcaok}
\mathscr{K}_{q}^{\Omega}(b_{1}, b_{2}) &=  \dfrac{b^{T}{\bm{\mathcal{II}}}_{\Omega}adj(\Delta_{\Omega}^{T})b}{b^{T}{\bm{\mathcal{I}}}_{\Omega}b},
\end{align}
where $q \in U$, $adj()$ denotes the adjoint of a matrix, $b^{T} = \begin{pmatrix} b_{1} & b_{2} \end{pmatrix}$. Note that we can associate $(b_{1}, b_{2})$ to the coordinates of a vector  $v = b_{1}w_{1}(q) + b_{2}w_{2}(q) $, then we write $\mathscr{K}_{q}^{\Omega}(b_{1}, b_{2}) = \mathscr{K}_{q}^{\Omega}(v)$.

  \begin{prop}\label{propdecomposicao}
  		Let $\cal{C} = \lbrace x, \xi \rbrace$ be a line congruence, where $\xi$ is a frontal and let $\Omega$ be a tangent moving basis of $\xi$. Then
  		\begin{enumerate}
  			\item $\bm{\mathcal{I}} = \Delta_{\Omega}\bm{\mathcal{I}}_{\Omega} \Delta_{\Omega}^{T}$
  			\item  $\bm{\mathcal{II}} = \Delta_{\Omega} \bm{\mathcal{II}}_{\Omega}$.
  		\end{enumerate}
  	
  \end{prop}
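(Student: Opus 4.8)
The plan is to derive both identities from the single structural fact that a tangent moving basis $\Omega$ of $\xi$ yields the factorization $D\xi = \Omega\,\Delta_{\Omega}^{T}$ of the Jacobian matrix of $\xi$; this is exactly how the matrix $\Delta_{\Omega}$ was introduced in Section~\ref{sec:back3} (it is the analogue for $\xi$ of the matrix $\Lambda_{\Omega}$ attached to $x$). Once this is in hand, the whole proposition is a matter of transposing $2\times 2$ and $3\times 2$ matrices and reading off the matrix versions of the four quadratic forms involved, so I do not expect any substantive obstacle; the only point requiring care is keeping the transposes on the correct side and not confusing $\Delta_{\Omega}$ (which encodes $\xi$) with $\Lambda_{\Omega}$ (which encodes $x$).

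First I would record the coordinate-free expressions of the four matrices. From the definitions of $\mathscr{E},\mathscr{F},\mathscr{G}$ one reads off $\bm{\mathcal{I}} = D\xi^{T}D\xi$, while by definition $\bm{\mathcal{II}} = -D\xi^{T}Dx$; likewise $\bm{\mathcal{I}}_{\Omega} = \Omega^{T}\Omega$ and $\bm{\mathcal{II}}_{\Omega} = -\Omega^{T}Dx$ directly from the definitions in Section~\ref{sec:back4}.

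For part (1), substitute $D\xi = \Omega\,\Delta_{\Omega}^{T}$ into $\bm{\mathcal{I}} = D\xi^{T}D\xi$:
\[
\bm{\mathcal{I}} = \bigl(\Omega\,\Delta_{\Omega}^{T}\bigr)^{T}\bigl(\Omega\,\Delta_{\Omega}^{T}\bigr) = \Delta_{\Omega}\,\Omega^{T}\Omega\,\Delta_{\Omega}^{T} = \Delta_{\Omega}\,\bm{\mathcal{I}}_{\Omega}\,\Delta_{\Omega}^{T}.
\]
For part (2), the same substitution in $\bm{\mathcal{II}} = -D\xi^{T}Dx$ gives
\[
\bm{\mathcal{II}} = -\bigl(\Omega\,\Delta_{\Omega}^{T}\bigr)^{T}Dx = -\Delta_{\Omega}\,\Omega^{T}Dx = \Delta_{\Omega}\bigl(-\Omega^{T}Dx\bigr) = \Delta_{\Omega}\,\bm{\mathcal{II}}_{\Omega},
\]
which finishes the argument.

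If a more hands-on version is preferred, one can instead expand the columns of $D\xi$ as $\xi_{u_i} = (\Delta_{\Omega})_{i1}w_{1} + (\Delta_{\Omega})_{i2}w_{2}$ and substitute into the scalar definitions of $\mathscr{E},\dots,\mathscr{N}$, comparing with $\mathscr{E}_{\Omega},\dots,\mathscr{N}_{\Omega}$; this reproduces the two identities entry by entry, but with noticeably more bookkeeping, so the matrix computation above is the cleaner route.
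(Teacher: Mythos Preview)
Your proof is correct and follows essentially the same approach as the paper: both use the factorization $D\xi = \Omega\,\Delta_{\Omega}^{T}$ and substitute directly into the matrix expressions $\bm{\mathcal{I}} = D\xi^{T}D\xi$ and $\bm{\mathcal{II}} = -D\xi^{T}Dx$. The only difference is that you write out part~(2) explicitly whereas the paper leaves it as ``analogous.''
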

  
  \begin{proof}
  	 Note that 
  	\begin{align*}
  	\bm{\mathcal{I}} &= D\xi^{T}D\xi = (\Omega\Delta_{\Omega}^{T})^{T}(\Omega\Delta_{\Omega}^{T})
  	= \Delta_{\Omega} \Omega^{T}\Omega\Delta_{\Omega}^{T}
  	= \Delta_{\Omega} \bm{\mathcal{I}}_{\Omega}\Delta_{\Omega}^{T}.
  	\end{align*}
The case for $\bm{\mathcal{II}}$ follows analogously. 
  
  \end{proof}
  
  Given a line congruence $\cal{C} = \lbrace x, \xi \rbrace$, where $\xi: U \to S^{2}$ is a frontal and $\Omega$ is a tangent moving basis of $\xi$,  the next proposition shows how the function $\mathscr{K}_{q}$, from Kummer's theory (given in \ref{functionk}) and the function $\mathscr{K}_{q}^{\Omega}$, defined in (\ref{funcaok}), are related when $q \notin \Sigma(\xi)$.
   
\begin{prop}\label{relacaofuncoesk}
	Let $\cal{C} = \lbrace x, \xi \rbrace$ be a line congruence, where $\xi: U \to S^{2}$ is a frontal and let $\Omega$ be a tangent moving basis of $\xi$. Then, for each $q \notin \Sigma(\xi)$, $\delta_{\Omega} \mathscr{K}_{q}(a_1,a_2) = \mathscr{K}_{q}^{\Omega}(b_1,b_2)$, where we write $b^{T} = (b_{1}, b_{2})\in \R^2$, $a^T=(a_1, a_2) = b^T\Delta_{\Omega}^{-1}(q)$ and $\delta_{\Omega} = \det \Delta_{\Omega}(q)$.
	\end{prop}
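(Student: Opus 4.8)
The plan is to reduce the whole statement to $2\times 2$ matrix algebra, using Proposition \ref{propdecomposicao} together with the definitions of the two curvature functions. First I would record the basic setup: since $q\notin\Sigma(\xi)$ and $\Sigma(\xi)=\delta_{\Omega}^{-1}(0)$ (the frontal $\xi$ plays here the role of $\bar\Omega$ in the Remark preceding this section), the matrix $\Delta_{\Omega}(q)$ is invertible, so $\delta_{\Omega}=\det\Delta_{\Omega}(q)\neq 0$ and the relation $a^{T}=b^{T}\Delta_{\Omega}^{-1}(q)$, i.e. $a=(\Delta_{\Omega}^{T})^{-1}(q)\,b$, makes sense. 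I would also note that, directly from the definitions in Section \ref{sec:back2}, one has $\mathscr{K}_{q}(a_1,a_2)=\dfrac{a^{T}\bm{\mathcal{II}}\,a}{a^{T}\bm{\mathcal{I}}\,a}$, since $\mathscr{L}a_1^2+(\mathscr{M}_1+\mathscr{M}_2)a_1a_2+\mathscr{N}a_2^2=a^{T}\bm{\mathcal{II}}\,a$ (the quadratic form only sees the symmetric part of $\bm{\mathcal{II}}$) and $\mathscr{E}a_1^2+2\mathscr{F}a_1a_2+\mathscr{G}a_2^2=a^{T}\bm{\mathcal{I}}\,a$.

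Next I would compute the denominator. By Proposition \ref{propdecomposicao}(1), $\bm{\mathcal{I}}=\Delta_{\Omega}\bm{\mathcal{I}}_{\Omega}\Delta_{\Omega}^{T}$, so substituting $a=(\Delta_{\Omega}^{T})^{-1}b$ gives
\[
a^{T}\bm{\mathcal{I}}\,a = b^{T}\Delta_{\Omega}^{-1}\,\Delta_{\Omega}\bm{\mathcal{I}}_{\Omega}\Delta_{\Omega}^{T}\,(\Delta_{\Omega}^{T})^{-1}b = b^{T}\bm{\mathcal{I}}_{\Omega}\,b,
\]
which is exactly the denominator appearing in $\mathscr{K}_{q}^{\Omega}$ in (\ref{funcaok}).

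For the numerator I would use Proposition \ref{propdecomposicao}(2), $\bm{\mathcal{II}}=\Delta_{\Omega}\bm{\mathcal{II}}_{\Omega}$, to get $a^{T}\bm{\mathcal{II}}\,a=b^{T}\Delta_{\Omega}^{-1}\Delta_{\Omega}\bm{\mathcal{II}}_{\Omega}(\Delta_{\Omega}^{T})^{-1}b=b^{T}\bm{\mathcal{II}}_{\Omega}(\Delta_{\Omega}^{T})^{-1}b$. The one identity that actually carries weight is the $2\times 2$ relation $A^{-1}=\frac{1}{\det A}\,adj(A)$, which gives $(\Delta_{\Omega}^{T})^{-1}=\frac{1}{\det(\Delta_{\Omega}^{T})}adj(\Delta_{\Omega}^{T})=\frac{1}{\delta_{\Omega}}adj(\Delta_{\Omega}^{T})$; this is precisely where the factor $\delta_{\Omega}$ enters the statement, and it is also why the argument works only off $\Sigma(\xi)$. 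Hence $a^{T}\bm{\mathcal{II}}\,a=\frac{1}{\delta_{\Omega}}\,b^{T}\bm{\mathcal{II}}_{\Omega}\,adj(\Delta_{\Omega}^{T})\,b$.

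Combining the two computations,
\[
\mathscr{K}_{q}(a_1,a_2)=\frac{a^{T}\bm{\mathcal{II}}\,a}{a^{T}\bm{\mathcal{I}}\,a}=\frac{\tfrac{1}{\delta_{\Omega}}\,b^{T}\bm{\mathcal{II}}_{\Omega}\,adj(\Delta_{\Omega}^{T})\,b}{b^{T}\bm{\mathcal{I}}_{\Omega}\,b}=\frac{1}{\delta_{\Omega}}\,\mathscr{K}_{q}^{\Omega}(b_1,b_2),
\]
and multiplying through by $\delta_{\Omega}$ yields $\delta_{\Omega}\mathscr{K}_{q}(a_1,a_2)=\mathscr{K}_{q}^{\Omega}(b_1,b_2)$, as claimed. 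I do not anticipate a genuine obstacle: the proof is bookkeeping with transposes and inverses, and the only point requiring care is the adjoint--inverse identity for $2\times 2$ matrices together with the observation that $q\notin\Sigma(\xi)$ is exactly what guarantees $\Delta_{\Omega}(q)$ is invertible so that both the coordinate change and the division by $\delta_{\Omega}$ are legitimate.
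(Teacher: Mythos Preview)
Your proof is correct and follows essentially the same route as the paper: both substitute $a=(\Delta_{\Omega}^{T})^{-1}b$ into $\mathscr{K}_{q}=\dfrac{a^{T}\bm{\mathcal{II}}a}{a^{T}\bm{\mathcal{I}}a}$, apply Proposition~\ref{propdecomposicao} to simplify numerator and denominator, and then use $(\Delta_{\Omega}^{T})^{-1}=\delta_{\Omega}^{-1}adj(\Delta_{\Omega}^{T})$ to produce the factor $\delta_{\Omega}$. Your write-up is in fact slightly more explicit than the paper's about why $q\notin\Sigma(\xi)$ is needed and why $a^{T}\bm{\mathcal{II}}a$ equals the Kummer second form despite $\bm{\mathcal{II}}$ not being symmetric.
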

\begin{proof}
	Since $q \notin \Sigma(\xi)$, let $w = D\xi_{q}a$, hence we can write $w = \Omega \Delta_{\Omega}^{T} a$, where $a^{T} = \begin{pmatrix} a_{1} & a_{2} \end{pmatrix}$. Thus $a = \Delta_{\Omega}^{-T}b$, where $b^{T} = \begin{pmatrix}
	b_{1} & b_{2}
	\end{pmatrix}$ are the coordinates of $w$ relative to $\Omega$. From proposition \ref{propdecomposicao},
	\begin{align*}
	\mathscr{K}_{q}(a_{1}, a_{2}) &= \dfrac{a^{T} \bm{\mathcal{II}}a}{a^{T}\bm{\mathcal{I}}a}
	=  \dfrac{b^{T}\Delta_{\Omega}^{-1} \bm{\mathcal{II}}\Delta_{\Omega}^{-T}b}{b^{T}\Delta_{\Omega}^{-1}\bm{\mathcal{I}}\Delta_{\Omega}^{-T}b}
	=  \dfrac{b^{T}\Delta_{\Omega}^{-1}\left( \Delta_{\Omega}\bm{\mathcal{II}}_{\Omega}  \right)\Delta_{\Omega}^{-T}b}{b^{T}\Delta_{\Omega}^{-1}\left( \Delta_{\Omega}\bm{\mathcal{I}}_{\Omega} \Delta_{\Omega}^{T}  \right)\Delta_{\Omega}^{-T}b}\\
	&=\dfrac{1}{\det(\Delta_{\Omega})}\dfrac{b^{T} \bm{\mathcal{II}}_{\Omega}  adj(\Delta_{\Omega}^{T})b}{b^{T}\Delta_{\Omega}^{-1}\left( \Delta_{\Omega}\bm{\mathcal{I}}_{\Omega} \Delta_{\Omega}^{T}  \right)\Delta_{\Omega}^{-T}b}	
	\end{align*}
	hence
	\begin{align}\label{relacaofuncaok}
\delta_{\Omega}	\mathscr{K}_{q}(a_{1}, a_{2}) &=  \dfrac{b^{T} \bm{\mathcal{II}}_{\Omega}  adj(\Delta_{\Omega}^{T})b}{b^{T}\bm{\mathcal{I}}_{\Omega} b} = \mathscr{K}_{q}^{\Omega}(b_{1}, b_{2})
	\end{align}
		\end{proof}
	
Note that $\mathscr{K}_{q}^{\Omega}(a_1, a_2) = \mathscr{K}_{q}^{\Omega}(\varphi(a_1, a_2))$, for all $\varphi \in \R \setminus 0$, thus we can consider $\mathscr{K}_{q}^{\Omega}: S^{1} \rightarrow \R$.

\begin{prop}\label{propcongnormal}
	Let ${\cal C} = \lbrace x, \xi \rbrace$ be a line congruence, where $\xi: U \to S^2$ is a frontal. The congruence $\cal C$ is normal if and only if the matrix $\bm{\cal{II}}_{\Omega}adj(\Delta_{\Omega}^{T})$ is symmetric.
\end{prop}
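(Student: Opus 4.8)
The plan is to reduce the statement to the symmetry of the classical Kummer matrix $\bm{\mathcal{II}}=-D\xi^{T}Dx$ and then transfer that symmetry to $\bm{\mathcal{II}}_{\Omega}adj(\Delta_{\Omega}^{T})$ by a purely algebraic identity for $2\times 2$ matrices. First I would invoke Proposition~\ref{teonormal1}: the congruence $\mathcal{C}$ is normal if and only if $\mathscr{M}_{1}=\mathscr{M}_{2}$. Since $\bm{\mathcal{II}}$ has $\mathscr{L},\mathscr{N}$ on its diagonal and $\mathscr{M}_{1},\mathscr{M}_{2}$ as its off-diagonal entries, this is exactly the requirement that $\bm{\mathcal{II}}$ be a symmetric matrix at every point of $U$. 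Hence it suffices to show that, pointwise on $U$, $\bm{\mathcal{II}}$ is symmetric if and only if $\bm{\mathcal{II}}_{\Omega}adj(\Delta_{\Omega}^{T})$ is.

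Next I would use Proposition~\ref{propdecomposicao}(2), which gives $\bm{\mathcal{II}}=\Delta_{\Omega}\bm{\mathcal{II}}_{\Omega}$. Fixing a point $q\in U$ and writing $A=\Delta_{\Omega}(q)$, $C=\bm{\mathcal{II}}_{\Omega}(q)$, the equivalence to be proved becomes: $AC$ is symmetric if and only if $C\,adj(A^{T})$ is symmetric. The heart of the matter is the identity
\begin{equation*}
AC-(AC)^{T}=C\,adj(A^{T})-\bigl(C\,adj(A^{T})\bigr)^{T},
\end{equation*}
valid for \emph{all} $2\times 2$ matrices $A,C$, which expresses that $\bm{\mathcal{II}}$ and $\bm{\mathcal{II}}_{\Omega}adj(\Delta_{\Omega}^{T})$ have the same antisymmetric part at each point; one is then symmetric precisely when the other is, and combined with the first paragraph this finishes the proof. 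To establish the identity I would use the $2\times 2$ Cayley--Hamilton relation $adj(M)=tr(M)\,\mathrm{Id}-M$ (so $adj(A^{T})=tr(A)\,\mathrm{Id}-A^{T}$) together with $adj(A^{T})^{T}=adj(A)$; expanding the right-hand side, the identity reduces to $A(C-C^{T})+(C-C^{T})A^{T}=tr(A)(C-C^{T})$, and since $C-C^{T}$ is a scalar multiple of $J=\left(\begin{smallmatrix}0&1\\-1&0\end{smallmatrix}\right)$ this comes down to the elementary fact $AJ+JA^{T}=tr(A)J$.

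The argument is short; the only step requiring real care is keeping track of the transposes in the displayed $2\times 2$ identity, and I expect this to be the main (and essentially the only) obstacle. I would emphasize that the identity holds for every $2\times 2$ matrix $A$, with no invertibility hypothesis on $\Delta_{\Omega}$, so the equivalence is genuinely pointwise and, in particular, valid over the singular set $\Sigma(\xi)$ — one never needs to invert $\Delta_{\Omega}$ nor to pass to $U\setminus\Sigma(\xi)$ and argue by density (although, since $\xi$ is a proper frontal, that alternative route is also available, because off $\Sigma(\xi)$ the matrix $\bm{\mathcal{II}}_{\Omega}adj(\Delta_{\Omega}^{T})$ equals $\delta_{\Omega}$ times a congruence transform of $\bm{\mathcal{II}}$, whose symmetry is thus equivalent to that of $\bm{\mathcal{II}}$).
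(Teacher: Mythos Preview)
Your proposal is correct and follows essentially the same route as the paper: both reduce normality to the symmetry of $\bm{\mathcal{II}}=\Delta_{\Omega}\bm{\mathcal{II}}_{\Omega}$ via Proposition~\ref{teonormal1} (the paper cites \cite{Izumiya} directly) and Proposition~\ref{propdecomposicao}(2), and then pass to the symmetry of $\bm{\mathcal{II}}_{\Omega}adj(\Delta_{\Omega}^{T})$ by a $2\times 2$ matrix computation. The paper leaves that last step as ``straightforward calculations''; your identity $AC-(AC)^{T}=C\,adj(A^{T})-(C\,adj(A^{T}))^{T}$, reduced via Cayley--Hamilton to $AJ+JA^{T}=tr(A)J$, is exactly such a calculation and has the virtue of making explicit that no invertibility of $\Delta_{\Omega}$ is needed.
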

\begin{proof}
It follows from proposition 5.1 in \cite{Izumiya} that $\cal{C}$ is normal if and only if $\bm{\mathcal{II}} = \Delta_{\Omega} \bm{\mathcal{II}}_{\Omega}$ is symmetric. It can be shown by straightforward calculations that  this is equivalent to say that	$\bm{\cal{II}}_{\Omega}adj(\Delta_{\Omega}^{T})$ is symmetric.
\end{proof}	

\begin{defi}\normalfont
	Let $ \lbrace x, \xi \rbrace$ be a line congruence, where $\xi: U \rightarrow \R^3$ is a frontal and let $C$ be a curve on $x$ parametrized by $x(t) = x(\alpha(t))$, where $\alpha: I \rightarrow U$ is smooth and $\xi(t) = \xi(\alpha(t))$ is the restriction of $\xi$ to $C$. The ruled surface $S_{C}$, parametrized by
		\begin{equation}\label{supcong}
	Y(t,v)=x(t)+v\xi(t),\quad t \in I\subset \R, v \in \R,
	\end{equation}
 is called a surface of the congruence.
\end{defi}

\begin{defi}\label{defdirprincipal}\normalfont
	Let $ \lbrace x, \xi \rbrace$ be a line congruence, where $\xi: U \rightarrow S^{2}$ is a proper frontal, $\Omega = \begin{pmatrix}
	w_{1} & w_{2}
	\end{pmatrix}$ a tangent moving basis of $\xi$. We say that a direction $w \in P_{\Omega}$ is a \textit{Kummer principal direction} if $\mathscr{K}_{q}^{\Omega}(w)$ is an extreme value of $\mathscr{K}_{q}^{\Omega}$.	
\end{defi}

\begin{obs}\normalfont\label{remarkindepenciabase}
	By analogy with the case of definition 3.2 in  \cite{Medina3}, one can get that the Kummer principal directions do not depend on the chosen tangent moving basis. 
\end{obs}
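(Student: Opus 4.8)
The plan is to show that passing from one tangent moving basis $\Omega$ of $\xi$ to another, $\tilde{\Omega}$, only rescales the function $\mathscr{K}_{q}^{\Omega}$ by a nonzero real number that does not depend on the direction; the statement about Kummer principal directions then follows at once. So let $\Omega=\begin{pmatrix}w_{1}&w_{2}\end{pmatrix}$ and $\tilde{\Omega}=\begin{pmatrix}\tilde{w}_{1}&\tilde{w}_{2}\end{pmatrix}$ be two tangent moving bases of the proper frontal $\xi$. Since, as recalled in Section \ref{sec:back3}, $P_{\Omega}=P_{\tilde{\Omega}}$, at each $q\in U$ the ordered pairs $w_{1}(q),w_{2}(q)$ and $\tilde{w}_{1}(q),\tilde{w}_{2}(q)$ span the same plane, so there is a unique invertible $Q(q)$ with $\tilde{\Omega}=\Omega Q$, and $Q=\bm{\mathcal{I}}_{\Omega}^{-1}\Omega^{T}\tilde{\Omega}$ shows that $Q:U\to\mathrm{GL}_{2}(\R)$ is smooth.

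Next I would record the transformation laws under $\tilde{\Omega}=\Omega Q$. Straight from the definitions, $\bm{\mathcal{I}}_{\tilde{\Omega}}=\tilde{\Omega}^{T}\tilde{\Omega}=Q^{T}\bm{\mathcal{I}}_{\Omega}Q$ and $\bm{\mathcal{II}}_{\tilde{\Omega}}=-\tilde{\Omega}^{T}Dx=Q^{T}\bm{\mathcal{II}}_{\Omega}$. Writing $D\xi=\Omega\Delta_{\Omega}^{T}=\tilde{\Omega}\Delta_{\tilde{\Omega}}^{T}=\Omega Q\Delta_{\tilde{\Omega}}^{T}$ and using that $\Omega$ is left-invertible (with left inverse $\bm{\mathcal{I}}_{\Omega}^{-1}\Omega^{T}$) gives $\Delta_{\tilde{\Omega}}^{T}=Q^{-1}\Delta_{\Omega}^{T}$, whence, by $adj(AB)=adj(B)\,adj(A)$ and $adj(Q^{-1})=(\det Q)^{-1}Q$ for $2\times2$ matrices,
\[
adj(\Delta_{\tilde{\Omega}}^{T})=adj(\Delta_{\Omega}^{T})\,adj(Q^{-1})=\frac{1}{\det Q}\,adj(\Delta_{\Omega}^{T})\,Q.
\]
Finally, a fixed direction $w\in P_{\Omega}=P_{\tilde{\Omega}}$ has coordinate vector $b$ relative to $\Omega$ and $\tilde{b}=Q^{-1}b$ relative to $\tilde{\Omega}$, since $w=\Omega b=\tilde{\Omega}\tilde{b}=\Omega Q\tilde{b}$.

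Substituting all of this into formula \eqref{funcaok} written for $\tilde{\Omega}$, the factors $Q$ and $Q^{-1}$ cancel, leaving numerator $\tfrac{1}{\det Q(q)}\,b^{T}\bm{\mathcal{II}}_{\Omega}\,adj(\Delta_{\Omega}^{T})\,b$ and denominator $b^{T}\bm{\mathcal{I}}_{\Omega}b$, so that
\[
\mathscr{K}_{q}^{\tilde{\Omega}}(w)=\frac{1}{\det Q(q)}\;\mathscr{K}_{q}^{\Omega}(w)\qquad\text{for every direction }w\in P_{\Omega}.
\]
As a cross-check, away from $\Sigma(\xi)$ one also has $\delta_{\tilde{\Omega}}=\delta_{\Omega}/\det Q$, so the same identity drops out of Proposition \ref{relacaofuncoesk} together with the basis-independence of the classical $\mathscr{K}_{q}$.

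Since $1/\det Q(q)$ is a fixed nonzero constant once $q$ is chosen, $\mathscr{K}_{q}^{\tilde{\Omega}}$ and $\mathscr{K}_{q}^{\Omega}$ attain their extreme values at exactly the same directions of $P_{\Omega}$ — with maxima and minima interchanged precisely when $\det Q(q)<0$, which is irrelevant since only the \emph{set} of extremal directions matters. By Definition \ref{defdirprincipal}, $\Omega$ and $\tilde{\Omega}$ therefore determine the same Kummer principal directions. I expect the only real work to be the index/transpose bookkeeping in the transformation laws, in particular getting the $adj$ and $\det Q$ factors (and the coordinate change $\tilde{b}=Q^{-1}b$) in the right places; the possible swap of maxima and minima when $\det Q(q)<0$ is a harmless triviality.
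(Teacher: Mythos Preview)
The paper does not actually prove this remark; it simply defers to the analogy with \cite{Medina3}. Your argument is correct and supplies exactly the computation the paper omits: the transformation laws $\bm{\mathcal{I}}_{\tilde{\Omega}}=Q^{T}\bm{\mathcal{I}}_{\Omega}Q$, $\bm{\mathcal{II}}_{\tilde{\Omega}}=Q^{T}\bm{\mathcal{II}}_{\Omega}$, $\Delta_{\tilde{\Omega}}^{T}=Q^{-1}\Delta_{\Omega}^{T}$ and $\tilde{b}=Q^{-1}b$ combine to give $\mathscr{K}_{q}^{\tilde{\Omega}}(w)=(\det Q(q))^{-1}\mathscr{K}_{q}^{\Omega}(w)$, from which the invariance of the set of extremal directions is immediate. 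Your handling of the sign of $\det Q(q)$ is also appropriate, since Definition~\ref{defdirprincipal} speaks only of extreme values and not of maxima versus minima.
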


\begin{defi}\normalfont
	Let $ \lbrace x, \xi \rbrace$ be a line congruence, where $\xi: U \rightarrow S^{2}$ is a proper frontal and $\Omega$ a tangent moving basis of $\xi$. Let $S_{C}$ be a surface of the congruence, given by 
	\begin{equation}\label{supcong1}
	Y(t,v)=x(t)+v\xi(t),\quad t \in I\subset \R, v \in \R,
	\end{equation}
	where $\alpha: U \to I$, such that $\alpha(t) = (u_{1}(t), u_{2}(t))$ is smooth, $x(t) = x(\alpha(t))$ and $\xi(t) = \xi(\alpha(t))$.
	We say that $S_{C}$ is a \textit{principal surface} if for all $t \in I$ such that $\begin{pmatrix} b_{1} \\ b_{2} \end{pmatrix} = \Delta_{\Omega}^{T}\begin{pmatrix}
	u_{1}' \\ u_{2}'
	\end{pmatrix} \neq \mathbf{0}$, $(b_{1}, b_{2})$ determines a Kummer principal direction in $P_{\Omega}$. We call a directrix curve of a principal surface (or the associated curve on $U$, $\alpha: I \to U$) a \textit{Kummer principal line}.
\end{defi}

\begin{lema}
	Let $ \lbrace x, \xi \rbrace$ be a line congruence, where $\xi: U \rightarrow S^{2}$ is a proper frontal, $\Omega$ is a tangent moving basis of $\xi$ and we write $\Delta_{\Omega} = \begin{pmatrix}\delta_{ij} \end{pmatrix}$. Then, $\mathscr{K}_{q}^{\Omega}$ has an extreme value at $(b_{1}, b_{2})$ if and only if
	\begin{align}
	&b_{1}(\delta_{22}\mathscr{L}_{\Omega} - \delta_{12}\mathscr{M}_{1}) + \dfrac{b_{2}}{2}(\delta_{11}\mathscr{M}_{1\Omega} - \delta_{21}\mathscr{L}_{\Omega} + \delta_{22}\mathscr{M}_{2\Omega} - \delta_{12}\mathscr{N}_{\Omega}) \label{eq1}\\ 
	&- k_{0}(b_{1}\mathscr{E}_{\Omega} + b_{2}\mathscr{F}_{\Omega}) = 0\nonumber  \\
	&b_{2}(\delta_{11}\mathscr{N}_{\Omega} - \delta_{21}\mathscr{M}_{2\Omega}) + \dfrac{b_{1}}{2}(\delta_{11}\mathscr{M}_{1\Omega} - \delta_{21}\mathscr{L}_{\Omega} + \delta_{22}\mathscr{M}_{2\Omega} - \delta_{12}\mathscr{N}_{\Omega}) \label{eq2}\\ 
	&- k_{0}(b_{2}\mathscr{G}_{\Omega} + b_{1}\mathscr{F}_{\Omega}) = 0, \nonumber
	\end{align}
	where $k_{0} = \mathscr{K}^{\Omega}_{q}(b_{1}, b_{2})$.
\end{lema}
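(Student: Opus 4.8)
The plan is to compute the critical points of the function $\mathscr{K}_q^{\Omega}$ on $\R^2$ directly by the method of Lagrange multipliers (or equivalently, by differentiating the Rayleigh-quotient expression (\ref{funcaok}) and setting the gradient to zero). Write $N(b) = b^{T}\bm{\mathcal{II}}_{\Omega}\,\mathrm{adj}(\Delta_{\Omega}^{T})\,b$ for the numerator and $D(b) = b^{T}\bm{\mathcal{I}}_{\Omega}b$ for the denominator, so that $\mathscr{K}_q^{\Omega} = N/D$. Since $\mathscr{K}_q^{\Omega}$ is homogeneous of degree $0$, a point $(b_1,b_2)\ne\mathbf 0$ is critical if and only if $D(b)\,\nabla N(b) - N(b)\,\nabla D(b) = \mathbf 0$; dividing by $D(b)>0$ (recall $\bm{\mathcal{I}}_{\Omega}$ is positive-definite) and setting $k_0 = \mathscr{K}_q^{\Omega}(b_1,b_2) = N(b)/D(b)$, this becomes the pair of linear equations
\[
\tfrac12\,\partial_{b_i}\!\left(b^{T}\bm{\mathcal{II}}_{\Omega}\,\mathrm{adj}(\Delta_{\Omega}^{T})\,b\right) - k_0\,\tfrac12\,\partial_{b_i}\!\left(b^{T}\bm{\mathcal{I}}_{\Omega}b\right) = 0,\qquad i=1,2.
\]

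The second step is to evaluate these two derivatives explicitly. For the denominator, $\tfrac12\partial_{b_1}(\mathscr{E}_{\Omega}b_1^2 + 2\mathscr{F}_{\Omega}b_1b_2 + \mathscr{G}_{\Omega}b_2^2) = \mathscr{E}_{\Omega}b_1 + \mathscr{F}_{\Omega}b_2$ and similarly $\tfrac12\partial_{b_2}(\cdots) = \mathscr{F}_{\Omega}b_1 + \mathscr{G}_{\Omega}b_2$, which accounts for the terms $-k_0(b_1\mathscr{E}_{\Omega}+b_2\mathscr{F}_{\Omega})$ and $-k_0(b_2\mathscr{G}_{\Omega}+b_1\mathscr{F}_{\Omega})$ in (\ref{eq1}) and (\ref{eq2}). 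For the numerator one must first form the matrix $A := \bm{\mathcal{II}}_{\Omega}\,\mathrm{adj}(\Delta_{\Omega}^{T})$ in terms of the entries $\delta_{ij}$ of $\Delta_{\Omega}$ and of $\mathscr{L}_{\Omega}, \mathscr{M}_{1\Omega}, \mathscr{M}_{2\Omega}, \mathscr{N}_{\Omega}$. Using $\mathrm{adj}(\Delta_{\Omega}^{T}) = \left(\begin{smallmatrix}\delta_{22} & -\delta_{12}\\ -\delta_{21} & \delta_{11}\end{smallmatrix}\right)$ and $\bm{\mathcal{II}}_{\Omega} = \left(\begin{smallmatrix}\mathscr{L}_{\Omega} & \mathscr{M}_{2\Omega}\\ \mathscr{M}_{1\Omega} & \mathscr{N}_{\Omega}\end{smallmatrix}\right)$ (reading off the coefficients of the quadratic form $\mathscr{L}_{\Omega}b_1^2 + (\mathscr{M}_{1\Omega}+\mathscr{M}_{2\Omega})b_1b_2 + \mathscr{N}_{\Omega}b_2^2$ as a $2\times 2$ matrix), one computes $A_{11} = \delta_{22}\mathscr{L}_{\Omega} - \delta_{12}\mathscr{M}_{2\Omega}$, $A_{12} = -\delta_{21}\mathscr{L}_{\Omega} + \delta_{11}\mathscr{M}_{2\Omega}$, $A_{21} = \delta_{22}\mathscr{M}_{1\Omega} - \delta_{12}\mathscr{N}_{\Omega}$, $A_{22} = -\delta_{21}\mathscr{M}_{1\Omega} + \delta_{11}\mathscr{N}_{\Omega}$. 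Then $b^{T}Ab = A_{11}b_1^2 + (A_{12}+A_{21})b_1b_2 + A_{22}b_2^2$, and $\tfrac12\partial_{b_1}(b^{T}Ab) = A_{11}b_1 + \tfrac12(A_{12}+A_{21})b_2$, $\tfrac12\partial_{b_2}(b^{T}Ab) = A_{22}b_2 + \tfrac12(A_{12}+A_{21})b_1$. Substituting the expressions for $A_{ij}$ yields exactly the terms $b_1(\delta_{22}\mathscr{L}_{\Omega} - \delta_{12}\mathscr{M}_{1})$... wait — comparing with the stated (\ref{eq1}), the cross term $\tfrac12(A_{12}+A_{21}) = \tfrac12(\delta_{11}\mathscr{M}_{2\Omega} - \delta_{21}\mathscr{L}_{\Omega} + \delta_{22}\mathscr{M}_{1\Omega} - \delta_{12}\mathscr{N}_{\Omega})$, which matches the coefficient of $b_2/2$ in (\ref{eq1}) and of $b_1/2$ in (\ref{eq2}) up to relabeling $\mathscr{M}_{1\Omega}\leftrightarrow\mathscr{M}_{1}$; the leading coefficient $A_{11} = \delta_{22}\mathscr{L}_{\Omega}-\delta_{12}\mathscr{M}_{2\Omega}$ must be reconciled with $\delta_{22}\mathscr{L}_{\Omega}-\delta_{12}\mathscr{M}_{1}$ (and $A_{22}$ with $\delta_{11}\mathscr{N}_{\Omega}-\delta_{21}\mathscr{M}_{2\Omega}$) — so I would double-check whether the paper's convention assigns $\bm{\mathcal{II}}_{\Omega}$ with $\mathscr{M}_{1\Omega}$ in the $(1,2)$ slot and $\mathscr{M}_{2\Omega}$ in the $(2,1)$ slot, which simply transposes $\bm{\mathcal{II}}_{\Omega}$ and produces precisely the displayed form.

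The main obstacle, therefore, is purely bookkeeping: correctly fixing the matrix representation of the (non-symmetric) quadratic form $\bm{\mathcal{II}}_{\Omega}$ — i.e. which of $\mathscr{M}_{1\Omega}, \mathscr{M}_{2\Omega}$ sits above or below the diagonal — and then carefully expanding $\bm{\mathcal{II}}_{\Omega}\,\mathrm{adj}(\Delta_{\Omega}^{T})$ so that the off-diagonal entries combine in the symmetric way the derivative of a quadratic form demands. Once the matrix $A$ is written out, the two equations (\ref{eq1}) and (\ref{eq2}) are nothing more than $\tfrac12\nabla(b^{T}Ab) - k_0\,\bm{\mathcal{I}}_{\Omega}b = \mathbf 0$ written coordinate-wise, and the converse (that any solution of (\ref{eq1})–(\ref{eq2}) is a critical point, with the stated value $k_0 = \mathscr{K}_q^{\Omega}(b_1,b_2)$) follows by reversing the computation: multiplying (\ref{eq1}) by $b_1$, (\ref{eq2}) by $b_2$, adding, and using Euler's identity for the degree-$2$ homogeneous forms $b^{T}Ab$ and $b^{T}\bm{\mathcal{I}}_{\Omega}b$ to get $N(b) - k_0 D(b) = 0$, hence $k_0 = N(b)/D(b) = \mathscr{K}_q^{\Omega}(b_1,b_2)$ as claimed.
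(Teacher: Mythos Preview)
Your approach is essentially identical to the paper's: both arguments compute $\partial\mathscr{K}_{q}^{\Omega}/\partial b_i = 0$ for the Rayleigh quotient, expand the numerator $b^{T}\bm{\mathcal{II}}_{\Omega}\,\mathrm{adj}(\Delta_{\Omega}^{T})\,b$ and denominator $b^{T}\bm{\mathcal{I}}_{\Omega}b$ in coordinates, and for the converse multiply (\ref{eq1}) by $b_1$, (\ref{eq2}) by $b_2$, add, and read off $k_0 = \mathscr{K}_{q}^{\Omega}(b_1,b_2)$.

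The bookkeeping issue you flagged is real but resolves itself once you use the paper's convention $\bm{\mathcal{II}}_{\Omega} = -\Omega^{T}Dx$, which gives $\bm{\mathcal{II}}_{\Omega} = \left(\begin{smallmatrix}\mathscr{L}_{\Omega} & \mathscr{M}_{1\Omega}\\ \mathscr{M}_{2\Omega} & \mathscr{N}_{\Omega}\end{smallmatrix}\right)$ (i.e.\ $\mathscr{M}_{1\Omega}$ in the $(1,2)$ slot), together with the correct $\mathrm{adj}(\Delta_{\Omega}^{T}) = \left(\begin{smallmatrix}\delta_{22} & -\delta_{21}\\ -\delta_{12} & \delta_{11}\end{smallmatrix}\right)$; both of your tentative matrices were transposed, and correcting them produces exactly $A_{11}=\delta_{22}\mathscr{L}_{\Omega}-\delta_{12}\mathscr{M}_{1\Omega}$, $A_{22}=\delta_{11}\mathscr{N}_{\Omega}-\delta_{21}\mathscr{M}_{2\Omega}$, and $A_{12}+A_{21}$ equal to the cross term in the statement (the ``$\mathscr{M}_1$'' in the lemma is a typographical slip for $\mathscr{M}_{1\Omega}$).
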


\begin{proof}
	Note that 
	\begin{align*}
	\mathscr{K}_{q}^{\Omega}(b_{1}, b_{2}) &= \dfrac{b_{1}^2(\delta_{22}\mathscr{L}_{\Omega} - \delta_{12}\mathscr{M}_{1}) + b_{2}^2 (\delta_{11}\mathscr{N}_{\Omega} - \delta_{21}\mathscr{M}_{2\Omega})}{b_{1}^2\mathscr{E}_{\Omega} + 2b_{1}b_{2}\mathscr{F}_{\Omega} + b_{2}^2\mathscr{G}_\Omega}\\
	&+ \dfrac{ b_{1}b_{2}(\delta_{11}\mathscr{M}_{1\Omega} - \delta_{21}\mathscr{L}_{\Omega} + \delta_{22}\mathscr{M}_{2\Omega} - \delta_{12}\mathscr{N}_{\Omega})}{b_{1}^2\mathscr{E}_{\Omega} + 2b_{1}b_{2}\mathscr{F}_{\Omega} + b_{2}^2\mathscr{G}_\Omega} 
	\end{align*}
	Let us suppose that $\mathscr{K}_{q}^{\Omega}$ has an extreme value at $(b_{1}, b_{2})$. At an extreme value $k_{0}$ of $\mathscr{K}^{\Omega}_{q}$, we have $\frac{\partial \mathscr{K}_{q}^{\Omega}}{\partial b_{i}} = 0$, $i=1,2$. From this, we get (\ref{eq1}) and (\ref{eq2}). Reciprocally, if $(b_{1}, b_{2})$ is such that  (\ref{eq1}) and (\ref{eq2}) are valid, then we have directly that $k_{0}$ is an extreme value of $\mathscr{K}^{\Omega}_{q}$. Let us show that $\mathscr{K}^{\Omega}_{q}(b_{1}, b_{2}) = k_{0}$. Let us suppose $b_{1} \neq 0$ and $b_{2} \neq 0$ (other cases are analogous). If we sum (\ref{eq1}) multiplied by $b_{1}$ with (\ref{eq2}) multiplied by $b_{2}$, we obtain
	\begin{align*}
	b_{1}^2 &(\delta_{22}\mathscr{L}_{\Omega} - \delta_{12}\mathscr{M}_{1}) + b_{1}b_{2}(\delta_{11}\mathscr{M}_{1\Omega} - \delta_{21}\mathscr{L}_{\Omega} + \delta_{22}\mathscr{M}_{2\Omega} - \delta_{12}\mathscr{N}_{\Omega}) \\&+ b_{2}^2 (\delta_{11}\mathscr{N}_{\Omega} - \delta_{21}\mathscr{M}_{2\Omega}) = k_{0}(b_{1}^2\mathscr{E}_{\Omega} + 2b_{1}b_{2}\mathscr{F}_{\Omega} + b_{2}^2\mathscr{G}_\Omega).
	\end{align*}
	From this, we have $k_{0} = \mathscr{K}^{\Omega}_{q}(b_{1}, b_{2})$.
\end{proof}

\begin{prop}\label{obsprincipal}
	Let $ \lbrace x, \xi \rbrace$ be a line congruence, where $\xi: U \rightarrow S^{2}$ is a proper frontal, $\Omega$ a tangent moving basis of $\xi$ and we write $\Delta_{\Omega} = \begin{pmatrix}\delta_{ij} \end{pmatrix}$. 
	\begin{enumerate}
		\item A curve $(u_{1}(t), u_{2}(t))$ is a Kummer principal line if and only if this is a solution of
		\begin{align}\label{eqsupprincipais}
		C_{1}b_{1}^{2} + C_{2}b_{1}b_{2} + C_{3}b_{2}^2 = 0,\; \text{for all $t$},
		\end{align} 
		where
		\begin{align*}
		C_{1} &= 2\mathscr{F}_{\Omega}\left(\delta_{22}\mathscr{L}_{\Omega} - \delta_{12}\mathscr{M}_{1} \right) - \mathscr{E}_{\Omega}\left(\delta_{11}\mathscr{M}_{1\Omega} - \delta_{21}\mathscr{L}_{\Omega} + \delta_{22}\mathscr{M}_{2\Omega} - \delta_{12}\mathscr{N}_{\Omega} \right)\\
		C_{2} &= 2\mathscr{G}_{\Omega} \left(\delta_{22}\mathscr{L}_{\Omega} - \delta_{12}\mathscr{M}_{1} \right) - 2\mathscr{E}_{\Omega} \left( \delta_{11}\mathscr{N}_{\Omega} - \delta_{21}\mathscr{M}_{2\Omega} \right)\\
		C_{3} &= \mathscr{G}_{\Omega}\left(\delta_{11}\mathscr{M}_{1\Omega} - \delta_{21}\mathscr{L}_{\Omega} + \delta_{22}\mathscr{M}_{2\Omega} - \delta_{12}\mathscr{N}_{\Omega} \right) - 2\mathscr{F}_{\Omega}\left( \delta_{11}\mathscr{N}_{\Omega} - \delta_{21}\mathscr{M}_{2\Omega} \right).
		\end{align*}
		We call (\ref{eqsupprincipais}) the \textit{equation of principal surfaces} of the congruence.
		\item If the congruence is normal then (\ref{eqsupprincipais}) can be written as
		\begin{align}\label{repmatricialpri}
		\begin{pmatrix}
		u_{1}' & u_{2}'
		\end{pmatrix}\Delta_{\Omega} \PP adj(\bm{\cal{II}}_{\Omega})^{T}\Delta_{\Omega}\bm{\mathcal{I}}_{\Omega}\Delta_{\Omega}^{T}\begin{pmatrix}
		u_{1}'\\ u_{2}'
		\end{pmatrix} = 0,
		\end{align}
		where $\PP = \begin{pmatrix}
		0 & 1\\
		-1 & 0
		\end{pmatrix}$.
	\end{enumerate}
\end{prop}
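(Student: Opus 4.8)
The plan is to handle the two items separately, deriving (\ref{eqsupprincipais}) directly from the characterization of Kummer principal directions given in the preceding lemma, and then rewriting it in the matrix form (\ref{repmatricialpri}) under the normality hypothesis using Propositions \ref{propdecomposicao} and \ref{propcongnormal}.

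For item (1), I would start from the two extremality equations (\ref{eq1}) and (\ref{eq2}). These are linear in $(b_1,b_2)$ with the common parameter $k_0 = \mathscr{K}_q^{\Omega}(b_1,b_2)$. The standard trick here — exactly as in the classical derivation of (\ref{equaDifPrinc}) — is to eliminate $k_0$ between the two equations: solve for $k_0$ from (\ref{eq1}) and substitute into (\ref{eq2}) (or, cleaner, cross-multiply the two expressions $k_0 = (\cdots)/(b_1\mathscr{E}_\Omega + b_2\mathscr{F}_\Omega)$ and $k_0 = (\cdots)/(b_2\mathscr{G}_\Omega + b_1\mathscr{F}_\Omega)$ and clear denominators). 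After expansion, every term carries a factor $b_1$, $b_2$, or a product, and one collects the coefficients of $b_1^2$, $b_1 b_2$, $b_2^2$; these are precisely $C_1$, $C_2$, $C_3$ as written. Conversely, a curve $(u_1(t),u_2(t))$ is a Kummer principal line exactly when, for each $t$ with $(b_1,b_2) = \Delta_\Omega^T(u_1',u_2')^T \neq \mathbf 0$, the pair $(b_1,b_2)$ is a Kummer principal direction, i.e.\ a solution of the eliminated equation, which is (\ref{eqsupprincipais}). I expect this to be a routine but bookkeeping-heavy elimination; the only subtlety is checking that no spurious common factor is introduced or lost, and that the degenerate cases $b_1 = 0$ or $b_2 = 0$ (already separated out in the lemma's proof) are consistent with (\ref{eqsupprincipais}).

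For item (2), the observation is that (\ref{eqsupprincipais}) is a quadratic form in $(b_1,b_2)$, say $b^T A b = 0$ with $A$ the symmetric matrix having $C_1, C_2/2, C_3$ as entries. I would identify $A$ by inspection of the formulas for $C_1, C_2, C_3$: writing $\bm{\cal{II}}_\Omega adj(\Delta_\Omega^T)$ out entrywise and using that, under normality, this matrix is symmetric (Proposition \ref{propcongnormal}), one recognizes $C_1, C_2, C_3$ as the entries of $\PP\, adj(\bm{\cal{II}}_\Omega)^T \Delta_\Omega \bm{\mathcal{I}}_\Omega$ — or more precisely of $\bm{\mathcal I}_\Omega \cdot (\text{the symmetrized } \bm{\cal{II}}_\Omega adj(\Delta_\Omega^T)) \cdot \PP$ up to sign and transpose, the factor $\PP$ being the usual device that turns ``$A x \parallel x$'' into a scalar equation $x^T \PP A x = 0$. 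Then, since $(b_1,b_2) = \Delta_\Omega^T(u_1',u_2')^T$, substituting $b = \Delta_\Omega^T (u_1',u_2')^T$ into $b^T(\cdots)b = 0$ produces exactly the factor $\Delta_\Omega$ on the left and $\Delta_\Omega^T$ on the right in (\ref{repmatricialpri}), with $\bm{\mathcal I}_\Omega \Delta_\Omega^T$ appearing because $\bm{\mathcal I}_\Omega = \Omega^T\Omega$ is the metric used to form the first fundamental form and $adj(\Delta_\Omega^T)\Delta_\Omega^T = \delta_\Omega I$ lets one trade adjugates for inverses where convenient.

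The main obstacle I anticipate is purely algebraic: matching the explicit scalar coefficients $C_1, C_2, C_3$ with the entries of the proposed matrix product $\Delta_\Omega \PP\, adj(\bm{\cal{II}}_\Omega)^T \Delta_\Omega \bm{\mathcal{I}}_\Omega \Delta_\Omega^T$ without a sign or transpose error, and in particular using the normality hypothesis at the right moment — it is needed so that the $b_1 b_2$ coefficient coming from the off-diagonal symmetrization is unambiguous and so that $adj(\bm{\cal{II}}_\Omega)^T$ (rather than $adj$ of the non-symmetric product) is the correct object. I would verify the identification by expanding both the scalar form and the matrix product in the entries $\delta_{ij}$ and $\mathscr{E}_\Omega,\ldots,\mathscr{N}_\Omega$ and comparing; given Proposition \ref{propcongnormal}, the symmetry of $\bm{\cal{II}}_\Omega adj(\Delta_\Omega^T)$ collapses the cross terms exactly as needed. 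Everything else is substitution.
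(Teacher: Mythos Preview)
Your proposal is correct and follows essentially the same approach as the paper: for item (1), the paper eliminates $k_0$ from (\ref{eq1}) and (\ref{eq2}) by setting the $2\times 2$ determinant of the associated linear system to zero---which is exactly your cross-multiplication---and expands to obtain $C_1, C_2, C_3$; for item (2), the paper likewise uses Proposition \ref{propcongnormal} to extract the scalar normality condition $\delta_{11}\mathscr{M}_{1\Omega} - \delta_{21}\mathscr{L}_{\Omega} = \delta_{22}\mathscr{M}_{2\Omega} - \delta_{12}\mathscr{N}_{\Omega}$ and then verifies the matrix identity (\ref{repmatricialpri}) by direct substitution, just as you outline.
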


\begin{proof}
	\begin{enumerate}
		
		\item 
		From (\ref{eq1}) and (\ref{eq2}), we have that $(b_{1}(t), b_{2}(t))$ provides an extreme value of $\mathscr{K}^{\Omega}_{q}$ for all $t$  if and only if
		\begin{align*}
		\begin{vmatrix}
		b_{1}(\delta_{22}\mathscr{L}_{\Omega} - \delta_{12}\mathscr{M}_{1}) + \dfrac{b_{2}}{2}\mathcal{M} & b_{1}\mathscr{E}_{\Omega} + b_{2}\mathscr{F}_{\Omega}\\
		b_{2}(\delta_{11}\mathscr{N}_{\Omega} - \delta_{21}\mathscr{M}_{2\Omega}) + \dfrac{b_{1}}{2}\mathcal{M} &  b_{2}\mathscr{G}_{\Omega} + b_{1}\mathscr{F}_{\Omega}
		\end{vmatrix} = 0,
		\end{align*}
		where $\mathcal{M} = (\delta_{11}\mathscr{M}_{1\Omega} - \delta_{21}\mathscr{L}_{\Omega} + \delta_{22}\mathscr{M}_{2\Omega} - \delta_{12}\mathscr{N}_{\Omega})$. The equation (\ref{eqsupprincipais}) is obtained directly from the above expression.
		
		\item We know from proposition \ref{propcongnormal} that $\lbrace x, \xi \rbrace$ is normal if and only if $\bm{\cal{II}}_{\Omega}adj(\Delta_{\Omega}^{T})$ is symmetric, which is equivalent to say that
		\begin{align}\label{condnormal}
		\delta_{11}\mathscr{M}_{1\Omega} - \delta_{21}\mathscr{L}_{\Omega} = \delta_{22}\mathscr{M}_{2\Omega} - \delta_{12}\mathscr{N}_{\Omega}.
		\end{align}
		By using this condition in (\ref{eqsupprincipais}), we obtain (\ref{repmatricialpri}).

	\end{enumerate}
	
\end{proof}

\begin{prop}\label{propdisc}
	The discriminant $\mathcal{D} = C_{2}^2 - 4C_{1}C_{3}$ of the equation (\ref{eqsupprincipais}) is non-negative. This discriminant is zero if and only if the coefficients $C_{1}, C_{2}$ and $C_{3}$ are identically zero.
\end{prop}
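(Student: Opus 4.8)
The plan is to rewrite equation (\ref{eqsupprincipais}) in matrix form as the equation of the critical directions of the quotient $\mathscr{K}_{q}^{\Omega}$ — a ratio of quadratic forms whose denominator is positive definite — and then to invoke the elementary fact that such a quotient has only real ``eigendirections''.

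First I would put (\ref{eqsupprincipais}) into matrix form. Inspecting the $2\times 2$ determinant displayed in the proof of Proposition~\ref{obsprincipal}(1), one sees that, up to the harmless nonzero factor $2$,
\[
C_{1}b_{1}^{2}+C_{2}b_{1}b_{2}+C_{3}b_{2}^{2}=2\det\!\begin{pmatrix}\bm{A}b & \bm{\mathcal{I}}_{\Omega}b\end{pmatrix},\qquad b=(b_{1},b_{2})^{T},
\]
where $\bm{\mathcal{I}}_{\Omega}=\Omega^{T}\Omega$ is symmetric positive definite and $\bm{A}$ is the symmetric part of $\bm{\mathcal{II}}_{\Omega}\,adj(\Delta_{\Omega}^{T})$; only the symmetric part is relevant, since that matrix enters $\mathscr{K}_{q}^{\Omega}$ solely through the quadratic form $b^{T}\bm{\mathcal{II}}_{\Omega}\,adj(\Delta_{\Omega}^{T})b$. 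Explicitly, the diagonal entries of $\bm{A}$ are $\delta_{22}\mathscr{L}_{\Omega}-\delta_{12}\mathscr{M}_{1\Omega}$ and $\delta_{11}\mathscr{N}_{\Omega}-\delta_{21}\mathscr{M}_{2\Omega}$, and its off-diagonal entry is $\tfrac{1}{2}(\delta_{11}\mathscr{M}_{1\Omega}-\delta_{21}\mathscr{L}_{\Omega}+\delta_{22}\mathscr{M}_{2\Omega}-\delta_{12}\mathscr{N}_{\Omega})$. Hence $\mathcal{D}=C_{2}^{2}-4C_{1}C_{3}$ equals $4$ times the discriminant of the binary quadratic form $b\mapsto\det[\bm{A}b\mid\bm{\mathcal{I}}_{\Omega}b]$.

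Next, since $\bm{\mathcal{I}}_{\Omega}$ is symmetric positive definite I would write $\bm{\mathcal{I}}_{\Omega}=R^{T}R$ with $R$ invertible and substitute $b=R^{-1}c$. Because $\bm{\mathcal{I}}_{\Omega}b=R^{T}c$, a one-line computation gives $\det[\bm{A}b\mid\bm{\mathcal{I}}_{\Omega}b]=\det(R)\det[\widetilde{\bm{A}}\,c\mid c]$ with $\widetilde{\bm{A}}=(R^{-1})^{T}\bm{A}R^{-1}$ symmetric. Writing $\widetilde{\bm{A}}=\begin{pmatrix}\tilde a&\tilde b\\ \tilde b&\tilde d\end{pmatrix}$, one has $\det[\widetilde{\bm{A}}\,c\mid c]=-\tilde b\,c_{1}^{2}+(\tilde a-\tilde d)\,c_{1}c_{2}+\tilde b\,c_{2}^{2}$, whose discriminant is $(\tilde a-\tilde d)^{2}+4\tilde b^{2}\ge 0$. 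Since a nonsingular linear substitution multiplies the discriminant of a binary quadratic form by the square of its determinant, the discriminant of $\det[\bm{A}b\mid\bm{\mathcal{I}}_{\Omega}b]$ equals $(\det R)^{4}\big((\tilde a-\tilde d)^{2}+4\tilde b^{2}\big)$, and therefore $\mathcal{D}\ge 0$.

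For the equality statement, if $\mathcal{D}=0$ then $(\tilde a-\tilde d)^{2}+4\tilde b^{2}=0$, so $\widetilde{\bm{A}}$ is a scalar multiple of the identity, equivalently $\bm{A}=k\,\bm{\mathcal{I}}_{\Omega}$ for some $k\in\R$; then $\det[\bm{A}b\mid\bm{\mathcal{I}}_{\Omega}b]=k\det[\bm{\mathcal{I}}_{\Omega}b\mid\bm{\mathcal{I}}_{\Omega}b]$ vanishes identically, so $C_{1}=C_{2}=C_{3}=0$, and the reverse implication is immediate. I expect the only delicate points to be the bookkeeping of the positive factor produced by the substitution $b=R^{-1}c$ and the verification that (\ref{eqsupprincipais}) genuinely coincides with $\det[\bm{A}b\mid\bm{\mathcal{I}}_{\Omega}b]=0$ for the \emph{symmetrized} $\bm{A}$; a purely computational route would instead expand $C_{2}^{2}-4C_{1}C_{3}$ and collect the terms carrying $\mathscr{E}_{\Omega}\mathscr{G}_{\Omega}-\mathscr{F}_{\Omega}^{2}$, but that is messier. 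Alternatively one may avoid matrices altogether: $\mathscr{K}_{q}^{\Omega}$ is smooth on the compact circle $S^{1}$, hence attains a maximum and a minimum; if these values differ, the two directions realizing them are distinct critical directions, hence distinct roots of (\ref{eqsupprincipais}), so $\mathcal{D}>0$; if they coincide, $\mathscr{K}_{q}^{\Omega}$ is constant and one checks $C_{1}=C_{2}=C_{3}=0$ as above.
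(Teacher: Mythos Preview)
Your argument is correct, but it differs from the paper's. The paper observes directly that the three coefficients satisfy the linear relation $\mathscr{G}_{\Omega}C_{1}-\mathscr{F}_{\Omega}C_{2}+\mathscr{E}_{\Omega}C_{3}=0$, eliminates $C_{3}$, and completes the square to get
\[
\mathcal{D}=\left(C_{2}-\frac{2\mathscr{F}_{\Omega}}{\mathscr{E}_{\Omega}}C_{1}\right)^{2}+\frac{4C_{1}^{2}}{\mathscr{E}_{\Omega}^{2}}\bigl(\mathscr{E}_{\Omega}\mathscr{G}_{\Omega}-\mathscr{F}_{\Omega}^{2}\bigr)\ge 0,
\]
from which the equality case is immediate. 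Your route instead recognises (\ref{eqsupprincipais}) as the equation $\det[\bm{A}b\mid\bm{\mathcal{I}}_{\Omega}b]=0$ for a symmetric $\bm{A}$ against a positive-definite $\bm{\mathcal{I}}_{\Omega}$, and then reduces via $\bm{\mathcal{I}}_{\Omega}=R^{T}R$ to the Euclidean case, where the discriminant is visibly $(\tilde a-\tilde d)^{2}+4\tilde b^{2}$. This is essentially the statement that a self-adjoint operator on a Euclidean plane has real eigenvalues, so it explains \emph{why} the result holds and would generalise cleanly; the paper's computation is shorter and avoids the Cholesky step, but is less transparent. Your bookkeeping (the factor $2$ relating $C_{1}b_{1}^{2}+C_{2}b_{1}b_{2}+C_{3}b_{2}^{2}$ to the determinant, and the $(\det R)^{4}$ scaling of the discriminant) is correct, as is the equality analysis via $\bm{A}=k\,\bm{\mathcal{I}}_{\Omega}$. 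The alternative compactness argument you sketch at the end is also valid and is perhaps the most conceptual of the three.
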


\begin{proof}
	If we write $\mathcal{L} = 2\left(\delta_{22}\mathscr{L}_{\Omega} - \delta_{12}\mathscr{M}_{1} \right)$, $\mathcal{N} = 2\left( \delta_{11}\mathscr{N}_{\Omega} - \delta_{21}\mathscr{M}_{2\Omega} \right)$ and $\mathcal{M} = \left(\delta_{11}\mathscr{M}_{1\Omega} - \delta_{21}\mathscr{L}_{\Omega} + \delta_{22}\mathscr{M}_{2\Omega} - \delta_{12}\mathscr{N}_{\Omega} \right)$, then
	\begin{align*}
	C_{1} &= \mathscr{F}_{\Omega}\mathcal{L} - \mathscr{E}_{\Omega}\mathcal{M}\\
	C_{2} &= \mathscr{G}_{\Omega}\mathcal{L} - \mathscr{E}_{\Omega}\mathcal{N}\\
	C_{3} &= \mathscr{G}_{\Omega}\mathcal{M} - \mathscr{F}_{\Omega}\mathcal{N}.
	\end{align*}
	Hence, $\mathscr{G}_{\Omega}C_{1} - \mathscr{F}_{\Omega}C_{2} + \mathscr{E}_{\Omega}C_{3} = 0 $, from which we get $C_{3} = \dfrac{\mathscr{F}_{\Omega}C_{2} - \mathscr{G}_{\Omega}C_{1}}{\mathscr{E}_{\Omega}}$. Thus
	\begin{align*}
	\mathcal{D} &= C_{2}^2 - 4\dfrac{C_{1}\mathscr{F}_{\Omega}C_{2}}{\mathscr{E}_{\Omega}} + 4\dfrac{\mathscr{G}_{\Omega}C_{1}^2}{\mathscr{E}_{\Omega}}\\
	&= \left( C_{2} - 2 \dfrac{C_{1}\mathscr{F}_{\Omega}}{\mathscr{E}_{\Omega}} \right)^{2} - 4 \dfrac{C_{1}^2 \mathscr{F}_{\Omega}^{2}}{\mathscr{E}^{2}} + 4\dfrac{\mathscr{G}_{\Omega}C_{1}^2}{\mathscr{E}_{\Omega}}\\
	&= \left( C_{2} - 2 \dfrac{C_{1}\mathscr{F}_{\Omega}}{\mathscr{E}_{\Omega}} \right)^{2} + 4 \dfrac{C_{1}^2}{\mathscr{E}_{\Omega}^2}\left(\mathscr{E}_{\Omega}\mathscr{G}_{\Omega} - \mathscr{F}_{\Omega}  \right) \geq 0.
	\end{align*}
	As $\mathscr{E}_{\Omega}\mathscr{G}_{\Omega} - \mathscr{F}_{\Omega} > 0$, we get $\mathcal{D} = 0$ if and only if $C_{1} = C_{2} = 0$, which implies that $C_{3} = 0$.
\end{proof}	

Proposition \ref{propdisc} asserts that at points where $\mathcal{D}>0$ there are only two Kummer principal directions. 

We can also look at the developable surfaces associated to a given line congruence $\lbrace x, \xi \rbrace$, where $x: U \rightarrow \R^3$ is a smooth map and $\xi: U \rightarrow \R^3$ is a unitary proper frontal. In order to do this, we have the next proposition.

\begin{prop}\label{propdesenv}
	Let $ \lbrace x, \xi \rbrace$ be a line congruence, where $\xi: U \rightarrow S^2$ is a proper frontal, $\Omega$ a tangent moving basis of $\xi$. A surface of the congruence $Y(t,v) = x(u_{1}(t), u_{2}(t)) + v\xi(u_{1}(t), u_{2}(t))$ is a developable surface if and only if $(u_{1}(t), u_{2}(t))$ is a solution of
	\begin{align}\label{eqdesenvolsup}
	\begin{pmatrix}
	u_{1}' & u_{2}'
	\end{pmatrix} \PP adj(\bm{\cal{II}}_{\Omega})\bm{\mathcal{I}}_{\Omega}\Delta_{\Omega}^{T}\begin{pmatrix}
	u_{1}'\\ u_{2}'
	\end{pmatrix} = 0.
	\end{align} 
	We call (\ref{eqdesenvolsup}) the \textit{equation of developable surfaces} of the congruence.
\end{prop}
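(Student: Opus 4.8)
The plan is to push the classical developability criterion through the tangent moving basis $\Omega$ of $\xi$. By the fact recalled in the proof of Proposition \ref{Cong_sup_desen} (see \cite{docarmo}), writing $x(t)=x(\alpha(t))$, $\xi(t)=\xi(\alpha(t))$ with $\alpha(t)=(u_1(t),u_2(t))$, the surface of the congruence $Y(t,v)=x(t)+v\xi(t)$ is developable if and only if $\det\bigl(x'(t),\xi'(t),\xi(t)\bigr)=0$ for every $t$. So the proof reduces to rewriting $\det(x',\xi',\xi)=0$ in terms of $\bm{\mathcal{I}}_{\Omega}$, $\bm{\mathcal{II}}_{\Omega}$ and $\Delta_{\Omega}$.

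The key structural fact to establish is that $\Omega^{T}\xi\equiv 0$, i.e. $\xi$ is orthogonal to the plane $P_{\Omega}$. Since $\|\xi\|\equiv 1$ we have $\langle\xi_{u_i},\xi\rangle\equiv 0$, and since $\Omega$ is a tangent moving basis of $\xi$ we have $\xi_{u_1},\xi_{u_2}\in P_{\Omega}$; at a point of $U\setminus\Sigma(\xi)$ these two vectors are linearly independent, hence span $P_{\Omega}$, which forces $\xi\perp P_{\Omega}$ there. Because $\xi$ is a \emph{proper} frontal, $U\setminus\Sigma(\xi)$ is dense, so $\Omega^{T}\xi=0$ on all of $U$ by continuity. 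Consequently $\{w_1,w_2,\xi\}$ is a basis of $\R^3$ at every point and $\det(w_1,w_2,\xi)=\langle w_1\times w_2,\xi\rangle$ never vanishes. I expect this to be the one step requiring care: at a singular point of $\xi$ the vectors $\xi_{u_1},\xi_{u_2}$ need not span $P_{\Omega}$, so the pointwise argument fails precisely there and one genuinely needs the density of $U\setminus\Sigma(\xi)$ --- which is exactly where the hypothesis ``proper'' is used.

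With this in hand, set $\alpha'=(u_1',u_2')^{T}$. Then $\xi'=D\xi\,\alpha'=\Omega\Delta_{\Omega}^{T}\alpha'=b_1w_1+b_2w_2$, where $(b_1,b_2)^{T}:=\Delta_{\Omega}^{T}\alpha'$, and, in the basis above, $x'=Dx\,\alpha'=c_1w_1+c_2w_2+c_3\xi$ for suitable functions $c_1,c_2,c_3$ of $t$. Expanding the determinant, the $c_3\xi$ term drops out (it repeats the column $\xi$), leaving $\det(x',\xi',\xi)=(c_1b_2-c_2b_1)\det(w_1,w_2,\xi)$; since the last factor is nonzero, developability is equivalent to $c_1b_2-c_2b_1=0$. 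To identify $(c_1,c_2)$, apply $\Omega^{T}$ to the decomposition of $x'$ and use $\Omega^{T}\Omega=\bm{\mathcal{I}}_{\Omega}$, $\Omega^{T}\xi=0$ and $\Omega^{T}Dx=-\bm{\mathcal{II}}_{\Omega}$: this gives $\bm{\mathcal{I}}_{\Omega}(c_1,c_2)^{T}=-\bm{\mathcal{II}}_{\Omega}\,\alpha'$, hence $(c_1,c_2)^{T}=-\bm{\mathcal{I}}_{\Omega}^{-1}\bm{\mathcal{II}}_{\Omega}\,\alpha'$, using that $\bm{\mathcal{I}}_{\Omega}$ is positive-definite, hence invertible.

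Finally I would assemble. Since $c_1b_2-c_2b_1=(c_1,c_2)\,\PP\,(b_1,b_2)^{T}$, the previous step and the symmetry of $\bm{\mathcal{I}}_{\Omega}$ give $c_1b_2-c_2b_1=-(\alpha')^{T}\bm{\mathcal{II}}_{\Omega}^{T}\bm{\mathcal{I}}_{\Omega}^{-1}\PP\,\Delta_{\Omega}^{T}\alpha'$. Multiplying by the nowhere-vanishing factor $-\det\bm{\mathcal{I}}_{\Omega}$ and using $\det\bm{\mathcal{I}}_{\Omega}\cdot\bm{\mathcal{I}}_{\Omega}^{-1}=adj(\bm{\mathcal{I}}_{\Omega})$, developability becomes $(\alpha')^{T}\bm{\mathcal{II}}_{\Omega}^{T}\,adj(\bm{\mathcal{I}}_{\Omega})\,\PP\,\Delta_{\Omega}^{T}\alpha'=0$. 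Now the elementary $2\times 2$ identities $\PP\,adj(A)=A^{T}\PP$ for any matrix $A$, and $adj(A)\,\PP=\PP A$ when $A$ is symmetric, yield $\bm{\mathcal{II}}_{\Omega}^{T}\,adj(\bm{\mathcal{I}}_{\Omega})\,\PP=\bm{\mathcal{II}}_{\Omega}^{T}\,\PP\,\bm{\mathcal{I}}_{\Omega}=\PP\,adj(\bm{\mathcal{II}}_{\Omega})\,\bm{\mathcal{I}}_{\Omega}$, so developability is equivalent to $(\alpha')^{T}\PP\,adj(\bm{\mathcal{II}}_{\Omega})\,\bm{\mathcal{I}}_{\Omega}\,\Delta_{\Omega}^{T}\alpha'=0$, which is exactly equation (\ref{eqdesenvolsup}). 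Apart from the orthogonality claim $\Omega^{T}\xi=0$, the argument is routine; the only thing to watch is keeping the transpose/adjugate identities consistent so that the final matrix is precisely the one stated, and not merely a nonzero multiple of it.
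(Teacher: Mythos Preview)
Your proof is correct. Both your argument and the paper's start from the classical developability criterion $[x',\xi',\xi]=0$ and both rely on the fact that $\xi$ is orthogonal to $P_{\Omega}$ (equivalently $\xi=\pm\dfrac{w_1\times w_2}{\|w_1\times w_2\|}$). From that point on the two organizations differ. The paper expands $[x',\xi',\xi]$ as a quadratic form in $(u_1',u_2')$ and computes each coefficient $[x_{u_i},\xi_{u_j},\xi]$ separately via the vector triple product identity $\mathbf{a}\times(\mathbf{b}\times\mathbf{c})=\langle\mathbf{a},\mathbf{c}\rangle\mathbf{b}-\langle\mathbf{a},\mathbf{b}\rangle\mathbf{c}$, then recognizes the result as the matrix expression in the statement. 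You instead expand $x'$ in the frame $\{w_1,w_2,\xi\}$, extract its $P_{\Omega}$-component by applying $\Omega^{T}$, and finish with the $2\times2$ identities $\PP\,adj(A)=A^{T}\PP$ and $adj(A)\,\PP=\PP A$ for symmetric $A$. Your packaging is cleaner and coordinate-free, and it makes explicit (through the density argument on $U\setminus\Sigma(\xi)$) where the hypothesis that $\xi$ be a \emph{proper} frontal is actually used, something the paper glosses over when it simply writes $\xi=\dfrac{w_1\times w_2}{\|w_1\times w_2\|}$. The paper's computation, on the other hand, has the virtue of displaying the coefficients of the binary form concretely, which is handy when comparing with the equation of principal surfaces later on.
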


\begin{proof}
	Let us suppose $\alpha: I \rightarrow U$ a smooth curve, given by $\alpha(t) = (u_{1}(t), u_{2}(t))$, such that the surface of the congruence $Y(t,v) = x(t) + v\xi(t)$ is developable, where $x(t) = x (\alpha(t))$	and $\xi(t) = \xi(\alpha(t))$. Then it is known that $\left[x', \xi', \xi  \right] = 0$ (See section 3.5 in \cite{docarmo}). From this expression, we obtain the differential equation of developable surfaces
	\begin{align}\label{eqdesenv}
	u_{1}'^2\left[ x_{u_1}, \xi_{u_1}, \xi \right] + u_{1}'u_{2}' \left(\left[x_{u_1}, \xi_{u_2}, \xi   \right]  + \left[x_{u_2}, \xi_{u_1}, \xi   \right] \right) + u_{2}'^2 \left[x_{u_2}, \xi_{u_2}, \xi   \right] = 0.
	\end{align}
	By considering that $\xi$ is unitary we have $\xi = \dfrac{w_{1} \times w_{2}}{\lVert w_{1} \times w_{2} \rVert}$, where $\Omega = \begin{pmatrix} w_{1} & w_{2} \end{pmatrix}$.  We also know that $D\xi = \Omega\Delta_{\Omega}^{T}$, where $\Delta_{\Omega} = \begin{pmatrix}
	\delta_{ij}
	\end{pmatrix}$, thus 
	\begin{align*}
	\left[ x_{u_1}, \xi_{u_1}, \xi \right] = \frac{1}{\lVert w_{1} \times w_{2} \rVert}\langle x_{u_1}, (\delta_{11}w_{1} + \delta_{12}w_{2}) \times( w_{1} \times w_{2}) \rangle.
	\end{align*}
	By using the coefficients of the first and second $\Omega$-Kummer fundamental forms and the formula for the vector triple product ($\mathbf{a} \times (\mathbf{b} \times \mathbf{c}) = \langle \mathbf{a}, \mathbf{c} \rangle \mathbf{b} - \langle \mathbf{a}, \mathbf{b} \rangle \mathbf{c}$), we get
	\begin{align}\label{coefuu}
	\left[ x_{u_1}, \xi_{u_1}, \xi \right] = \delta_{11}(\mathscr{F}_{\Omega}\mathscr{L}_{\Omega} - \mathscr{E}_{\Omega}\mathscr{M}_{1\Omega}) + \delta_{12}\left( \mathscr{L}_{\Omega}\mathscr{G}_{\Omega} -\mathscr{F}_{\Omega} \mathscr{M}_{1\Omega} \right).
	\end{align}	
	In a similar way, we obtain
	\begin{align}
	\left[ x_{u_1}, \xi_{u_2}, \xi \right] + \left[ x_{u_2}, \xi_{u_1}, \xi \right] =& \delta_{21}(\mathscr{F}_{\Omega} \mathscr{L}_{\Omega}  - \mathscr{E}_{\Omega} \Mu) + \delta_{22} (\Go \Lo - \Fo \Mu) \label{coefuv}\\
	&+ \delta_{11}(\Fo \Md - \Eo \No ) + \delta_{12}(\Go \Md - \Fo\No) \nonumber \\
	\left[ x_{u_2}, \xi_{u_2}, \xi \right] =& \delta_{21} (\Fo\Md - \Eo\No) + \delta_{22}(\Go\Md - \Fo\No) \label{coefvv}.
	\end{align}
	Hence, from (\ref{coefuu}), (\ref{coefuv}) and (\ref{coefvv}) we can rewrite (\ref{eqdesenv}) as
	\begin{align*}
	\begin{pmatrix}
	u_{1}' & u_{2}'
	\end{pmatrix} \PP adj(\bm{\cal{II}}_{\Omega})\bm{\mathcal{I}}_{\Omega}\Delta_{\Omega}^{T}\begin{pmatrix}
	u_{1}'\\ u_{2}'
	\end{pmatrix} = 0.
	\end{align*} 
	
\end{proof}

\begin{teo}\label{teoprincipal}
Let $ \lbrace x, \xi \rbrace$ be a normal line congruence, where $\xi: U \rightarrow S^{2}$ is a proper frontal, $\Omega$ a tangent moving basis of $\xi$. Then the equation of principal surfaces is a multiple of the equation of developable surfaces by $\delta_{\Omega}$, where $\delta_{\Omega} = \det \Delta_{\Omega}$. More precisely
\begin{align}\label{desXprin}
\Delta_{\Omega} \PP adj(\bm{\cal{II}}_{\Omega})^{T}\Delta_{\Omega} \bm{\cal{I}}_{\Omega}\Delta_{\Omega}^{T} & = \delta_{\Omega} \PP adj(\bm{\cal{II}}_{\Omega}) \bm{\cal{I}}_{\Omega}\Delta_{\Omega}^{T}.
\end{align}
\end{teo}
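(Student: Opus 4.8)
The plan is to exploit the $2\times 2$ identity $\PP A \PP^{T} = \operatorname{adj}(A)^{T}$, valid for any $2\times 2$ matrix $A$, together with the elementary facts $\operatorname{adj}(AB)=\operatorname{adj}(B)\operatorname{adj}(A)$ and $A\,\operatorname{adj}(A)=\operatorname{adj}(A)\,A = (\det A)\,\mathbf{I}$. The left-hand side of \eqref{desXprin} is, up to the symmetric factor $\bm{\mathcal{I}}_{\Omega}\Delta_{\Omega}^{T}$ on the right, essentially the matrix appearing in \eqref{repmatricialpri}; the right-hand side is the matrix of \eqref{eqdesenvolsup}. So the theorem is really the algebraic assertion that, under the normality condition \eqref{condnormal},
\begin{align*}
\Delta_{\Omega}\,\PP\,\operatorname{adj}(\bm{\mathcal{II}}_{\Omega})^{T}\Delta_{\Omega} \;=\; \delta_{\Omega}\,\PP\,\operatorname{adj}(\bm{\mathcal{II}}_{\Omega}),
\end{align*}
since the common right factor $\bm{\mathcal{I}}_{\Omega}\Delta_{\Omega}^{T}$ can then be appended to both sides.

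First I would rewrite the left-hand side using $\PP M \PP^{T} = \operatorname{adj}(M)^{T}$. Writing $\PP\,\operatorname{adj}(\bm{\mathcal{II}}_{\Omega})^{T} = \PP\,\operatorname{adj}(\bm{\mathcal{II}}_{\Omega})^{T}\PP^{T}\PP^{-T} = \operatorname{adj}\!\big(\operatorname{adj}(\bm{\mathcal{II}}_{\Omega})^{T}\big)\PP$ and noting that for a $2\times 2$ matrix $\operatorname{adj}(\operatorname{adj}(M))=M$, one gets $\PP\,\operatorname{adj}(\bm{\mathcal{II}}_{\Omega})^{T} = \bm{\mathcal{II}}_{\Omega}^{T}\,\PP$ (using $\PP^{-T}=-\PP=\PP^{T}$ appropriately, keeping track of signs). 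Hence the left-hand side becomes $\Delta_{\Omega}\bm{\mathcal{II}}_{\Omega}^{T}\PP\,\Delta_{\Omega}$. Similarly I would pull $\PP$ to the right on the right-hand side: $\PP\,\operatorname{adj}(\bm{\mathcal{II}}_{\Omega}) = \bm{\mathcal{II}}_{\Omega}^{T}\,\PP$ (again by $\PP M = \operatorname{adj}(M)^{T}\PP$ type manipulations, since $\operatorname{adj}(\bm{\mathcal{II}}_{\Omega}) = \PP^{T}\bm{\mathcal{II}}_{\Omega}^{T}\PP$). So the claim reduces to showing $\Delta_{\Omega}\bm{\mathcal{II}}_{\Omega}^{T}\PP\,\Delta_{\Omega} = \delta_{\Omega}\,\bm{\mathcal{II}}_{\Omega}^{T}\PP$, i.e. $\Delta_{\Omega}\bm{\mathcal{II}}_{\Omega}^{T}\PP\,\Delta_{\Omega}\PP^{-1} = \delta_{\Omega}\,\bm{\mathcal{II}}_{\Omega}^{T}$. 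Now $\PP\,\Delta_{\Omega}\,\PP^{-1} = \operatorname{adj}(\Delta_{\Omega})^{T}$ for $2\times 2$ matrices, so this is $\Delta_{\Omega}\bm{\mathcal{II}}_{\Omega}^{T}\operatorname{adj}(\Delta_{\Omega})^{T} = \delta_{\Omega}\bm{\mathcal{II}}_{\Omega}^{T}$, equivalently $\bm{\mathcal{II}}_{\Omega}^{T}\operatorname{adj}(\Delta_{\Omega})^{T} = \delta_{\Omega}\,\Delta_{\Omega}^{-1}\bm{\mathcal{II}}_{\Omega}^{T} = \operatorname{adj}(\Delta_{\Omega})\bm{\mathcal{II}}_{\Omega}^{T}$, that is, $\bm{\mathcal{II}}_{\Omega}^{T}\operatorname{adj}(\Delta_{\Omega})^{T}$ is symmetric, i.e. $\operatorname{adj}(\Delta_{\Omega})\,\bm{\mathcal{II}}_{\Omega}$ is symmetric. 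But by Proposition~\ref{propcongnormal}, normality of the congruence is exactly the statement that $\bm{\mathcal{II}}_{\Omega}\operatorname{adj}(\Delta_{\Omega}^{T})$ is symmetric, and taking transposes gives precisely that $\operatorname{adj}(\Delta_{\Omega})\,\bm{\mathcal{II}}_{\Omega}$ is symmetric. This closes the argument.

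Alternatively — and this may be cleaner to write up — I would verify \eqref{desXprin} directly at the level of the coefficients $C_1,C_2,C_3$ of Proposition~\ref{obsprincipal}(1): expand both $\PP\,\operatorname{adj}(\bm{\mathcal{II}}_{\Omega})\,\bm{\mathcal{I}}_{\Omega}\Delta_{\Omega}^{T}$ and $\Delta_{\Omega}\,\PP\,\operatorname{adj}(\bm{\mathcal{II}}_{\Omega})^{T}\Delta_{\Omega}\,\bm{\mathcal{I}}_{\Omega}\Delta_{\Omega}^{T}$ into $2\times 2$ matrices whose associated quadratic forms are respectively \eqref{eqdesenvolsup} and \eqref{eqsupprincipais}, and then show, using the normality relation \eqref{condnormal} $\delta_{11}\mathscr{M}_{1\Omega}-\delta_{21}\mathscr{L}_{\Omega}=\delta_{22}\mathscr{M}_{2\Omega}-\delta_{12}\mathscr{N}_{\Omega}$, that each $C_i$ equals $\delta_{\Omega}$ times the corresponding coefficient of the developable equation \eqref{eqdesenvolsup}; this uses the explicit formulas \eqref{coefuu}, \eqref{coefuv}, \eqref{coefvv} for $[x_{u_i},\xi_{u_j},\xi]$.

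The main obstacle is purely bookkeeping: one has to be scrupulous with the transposes and the sign conventions, since $\PP^{T}=-\PP=\PP^{-1}$ and $\operatorname{adj}(M)^{T}=\operatorname{adj}(M^{T})$, and it is easy to drop a sign when moving $\PP$ past $\operatorname{adj}$ or past $\Delta_{\Omega}$. The only genuine geometric input is Proposition~\ref{propcongnormal} (normality $\iff$ $\bm{\mathcal{II}}_{\Omega}\operatorname{adj}(\Delta_{\Omega}^{T})$ symmetric); everything else is the $2\times 2$ linear algebra identities $\PP M \PP^{T}=\operatorname{adj}(M)^{T}$, $\operatorname{adj}(\operatorname{adj}(M))=M$, $\operatorname{adj}(MN)=\operatorname{adj}(N)\operatorname{adj}(M)$, and $M\operatorname{adj}(M)=(\det M)\mathbf{I}$.
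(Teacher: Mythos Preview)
Your overall strategy is the same as the paper's: strip off the common right factor $\bm{\mathcal{I}}_{\Omega}\Delta_{\Omega}^{T}$, then use the normality condition (Proposition~\ref{propcongnormal}) together with the $2\times 2$ identities relating $\PP$ and $\operatorname{adj}$ to collapse the left-hand side to $\delta_{\Omega}$ times the right-hand side. The paper does this by first observing that normality is equivalent to $\operatorname{adj}(\bm{\mathcal{II}}_{\Omega})^{T}\Delta_{\Omega}$ being symmetric, swapping the two factors, and then using $\Delta_{\Omega}\PP\Delta_{\Omega}^{T}=\delta_{\Omega}\PP$ (which is just $-\PP A^{T}\PP=\operatorname{adj}(A)$ in disguise).

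However, your execution contains a transpose slip that actually breaks the argument as written. The identity $\PP M\PP^{T}=\operatorname{adj}(M)^{T}$ gives $\PP\operatorname{adj}(\bm{\mathcal{II}}_{\Omega})^{T}=\bm{\mathcal{II}}_{\Omega}\,\PP$, \emph{not} $\bm{\mathcal{II}}_{\Omega}^{T}\PP$; you dropped a transpose when writing $\operatorname{adj}\bigl(\operatorname{adj}(\bm{\mathcal{II}}_{\Omega})^{T}\bigr)\PP$ instead of $\operatorname{adj}\bigl(\operatorname{adj}(\bm{\mathcal{II}}_{\Omega})^{T}\bigr)^{T}\PP$. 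Carrying this through, the reduced equation should read $\Delta_{\Omega}\bm{\mathcal{II}}_{\Omega}\operatorname{adj}(\Delta_{\Omega}^{T})=\delta_{\Omega}\bm{\mathcal{II}}_{\Omega}^{T}$, and this \emph{does} follow from normality, since $\bm{\mathcal{II}}_{\Omega}\operatorname{adj}(\Delta_{\Omega}^{T})$ symmetric means $\bm{\mathcal{II}}_{\Omega}\operatorname{adj}(\Delta_{\Omega}^{T})=\operatorname{adj}(\Delta_{\Omega})\bm{\mathcal{II}}_{\Omega}^{T}$, and then multiplying by $\Delta_{\Omega}$ on the left gives $\delta_{\Omega}\bm{\mathcal{II}}_{\Omega}^{T}$. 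By contrast, the condition you actually land on, ``$\operatorname{adj}(\Delta_{\Omega})\bm{\mathcal{II}}_{\Omega}$ is symmetric'', is \emph{not} a consequence of normality: taking transposes of $\bm{\mathcal{II}}_{\Omega}\operatorname{adj}(\Delta_{\Omega}^{T})$ gives $\operatorname{adj}(\Delta_{\Omega})\bm{\mathcal{II}}_{\Omega}^{T}$, not $\operatorname{adj}(\Delta_{\Omega})\bm{\mathcal{II}}_{\Omega}$. (A quick $2\times 2$ check with $\Delta_{\Omega}=\left(\begin{smallmatrix}1&1\\0&1\end{smallmatrix}\right)$, $\bm{\mathcal{II}}_{\Omega}=\left(\begin{smallmatrix}1&0\\2&2\end{smallmatrix}\right)$ shows $\Delta_{\Omega}\bm{\mathcal{II}}_{\Omega}$ symmetric while $\operatorname{adj}(\Delta_{\Omega})\bm{\mathcal{II}}_{\Omega}$ is not.) So the plan is right and essentially matches the paper, but the bookkeeping you flagged as the main obstacle really did bite; fix the one transpose and the argument goes through.
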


\begin{proof}
	We know from propositions (\ref{obsprincipal}) and (\ref{propdesenv}) that the binary differential equations which provide principal and developable surfaces of the congruence are, respectively, given by
	\begin{align}
	\begin{pmatrix}
	u_{1}' & u_{2}'
	\end{pmatrix}\Delta_{\Omega} \PP adj(\bm{\cal{II}}_{\Omega})^{T}\Delta_{\Omega} \bm{\cal{I}}_{\Omega}\Delta_{\Omega}^{T}\begin{pmatrix}
	u_{1}' \\ u_{2}'
	\end{pmatrix} &= 0, \label{eqprincipal}\\
	\begin{pmatrix}
	u_{1}' & u_{2}'
	\end{pmatrix}\PP adj(\bm{\cal{II}}_{\Omega}) \bm{\cal{I}}_{\Omega}\Delta_{\Omega}\begin{pmatrix}
	u_{1}' \\ u_{2}'
	\end{pmatrix} &= 0.\label{eqdesenvovl}
	\end{align}
	It follows from proposition \ref{propcongnormal} that $\bm{\cal{II}}_{\Omega}adj(\Delta_{\Omega}^{T})$ is symmetric, since we have a normal congruence. It can be shown by straightforward calculations that  this is equivalent to say that	$adj(\bm{\cal{II}}_{\Omega})^{T}\Delta_{\Omega}$ is symmetric. Hence, we have
	\begin{align*}
	&\ \Delta_{\Omega} \PP adj(\bm{\cal{II}}_{\Omega})^{T}\Delta_{\Omega} \bm{\cal{I}}_{\Omega}\Delta_{\Omega}^{T}\\ 
	&= \Delta_{\Omega} \PP \Delta_{\Omega}^{T} adj(\bm{\cal{II}}_{\Omega}) \bm{\cal{I}}_{\Omega}\Delta_{\Omega}^{T} \nonumber\\
	&= -\Delta_{\Omega} \PP \Delta_{\Omega}^{T} \PP \PP adj(\bm{\cal{II}}_{\Omega}) \bm{\cal{I}}_{\Omega}\Delta_{\Omega}^{T},\; \text{since $-\PP = \PP^{-1}$}\nonumber\\
	&= \Delta_{\Omega}  adj(\Delta_{\Omega}) \PP adj(\bm{\cal{II}}_{\Omega}) \bm{\cal{I}}_{\Omega}\Delta_{\Omega}^{T},\; \text{since $-\PP\Delta_{\Omega}^{T} \PP = adj(\Delta_{\Omega})$} \nonumber\\
	&= \delta_{\Omega} \PP adj(\bm{\cal{II}}_{\Omega}) \bm{\cal{I}}_{\Omega}\Delta_{\Omega}^{T}.
	\end{align*}

\end{proof}

\begin{corollary}
Let $ \lbrace x, \xi \rbrace$ be a normal line congruence.	If $x$ and $\xi$ are analytic, then an analytic solution of the equation of principal surfaces is either a branch of $\Sigma(\xi)$ or an analytic solution of the equation of developable surfaces.
\end{corollary}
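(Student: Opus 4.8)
The plan is to read the corollary off from Theorem~\ref{teoprincipal} together with the identity principle for real-analytic functions. First I would fix once and for all an \emph{analytic} tangent moving basis $\Omega$ of $\xi$; this is possible because $\xi$ is an analytic frontal, so the construction of a tangent moving basis in \cite{Medina} can be performed with analytic entries. With such a choice the matrix-valued maps $\Delta_{\Omega}$, $\bm{\cal{I}}_{\Omega}$ and $\bm{\cal{II}}_{\Omega}$, as well as $\delta_{\Omega}=\det\Delta_{\Omega}$, are all real-analytic on $U$, and one has $\Sigma(\xi)=\delta_{\Omega}^{-1}(0)$.

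Now let $\alpha(t)=(u_{1}(t),u_{2}(t))$, $t\in I$, be an analytic solution of the equation of principal surfaces~(\ref{repmatricialpri}), and write $\bv(t)=(u_{1}'(t),u_{2}'(t))^{T}$. By Theorem~\ref{teoprincipal} the matrix in~(\ref{repmatricialpri}) equals $\delta_{\Omega}$ times the matrix in the equation of developable surfaces~(\ref{eqdesenvolsup}); hence, evaluating along $\alpha$,
\[
0=\bv^{T}\Delta_{\Omega}\PP adj(\bm{\cal{II}}_{\Omega})^{T}\Delta_{\Omega}\bm{\cal{I}}_{\Omega}\Delta_{\Omega}^{T}\bv\,\big|_{\alpha(t)}=\delta_{\Omega}(\alpha(t))\cdot g(t),\qquad t\in I,
\]
where $g(t):=\bv^{T}\PP adj(\bm{\cal{II}}_{\Omega})\bm{\cal{I}}_{\Omega}\Delta_{\Omega}^{T}\bv\,\big|_{\alpha(t)}$ is precisely the left-hand side of~(\ref{eqdesenvolsup}) along $\alpha$. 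Both $t\mapsto\delta_{\Omega}(\alpha(t))$ and $g$ are real-analytic on the interval $I$, and their product is identically zero. Since the zero set of a real-analytic function on a connected set is either the whole set or discrete, a product of two such functions on $I$ vanishes identically only if one of the factors does. Thus either $\delta_{\Omega}(\alpha(t))\equiv 0$, in which case $\alpha(I)\subset\delta_{\Omega}^{-1}(0)=\Sigma(\xi)$ and $\alpha$ parametrizes a branch of $\Sigma(\xi)$, or $g\equiv 0$, in which case $\alpha$ solves the equation of developable surfaces.

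The argument itself is short; I expect the only real care to be needed in two setup points. The first is the existence of an analytic tangent moving basis of the analytic frontal $\xi$, which is what guarantees that every coefficient occurring in~(\ref{repmatricialpri}) and~(\ref{eqdesenvolsup}) is analytic along $\alpha$, so that the factorization of Theorem~\ref{teoprincipal} can be read as an identity between analytic functions of $t$. The second is making precise the phrase ``branch of $\Sigma(\xi)$'': here one uses that $\Sigma(\xi)=\delta_{\Omega}^{-1}(0)$ is a real-analytic set and that a non-constant connected analytic arc contained in it lies in a single irreducible component. Both points are standard, so the substance of the corollary is entirely in Theorem~\ref{teoprincipal} plus analytic continuation on $I$.
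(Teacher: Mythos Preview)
Your argument is correct and is essentially the paper's own proof: both invoke Theorem~\ref{teoprincipal} to write the principal-surface equation along the curve as $\delta_{\Omega}(\alpha(t))$ times the developable-surface expression and then use that real-analytic functions on an interval form an integral domain (equivalently, that the zeros of a nonvanishing analytic function are isolated). You are somewhat more careful than the paper in flagging the need for an analytic tangent moving basis and in parsing ``branch of $\Sigma(\xi)$'', but the substance is identical.
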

\begin{proof}
Let $\gamma: I \to U$, given by $\gamma(t) = (u_{1}(t), u_{2}(t))$, be an analytic solution of the equation of principal surfaces, then $\delta_{\Omega}(\gamma)$ is an analytic mapping. If there is $t_{0}$ such that the derivatives $\delta_{\Omega}(\gamma)^{(j)}(t_{0}) = 0$, for all positive integer $j$, then $\delta_{\Omega}(\gamma)(t) = 0$, for all $t \in I$. Otherwise, the zeros of $\delta_{\Omega}(\gamma)$ are isolated and therefore $\gamma$ is a solution of the equation of developable surfaces, once we have (\ref{desXprin}).
\end{proof}

From now on, we consider $x$ a frontal, $\Omega$ a tangent moving basis of $x$ and $\xi$ its normal vector field. Also note that we can take $\Omega$ as a tangent moving basis of $\xi$.
\begin{prop}\label{cor3}
Let $x: U \rightarrow \R^3$ and $\xi: U \rightarrow S^{2}$ be two proper frontals, such that $\xi$ is the unit normal vector field of $x$. Then, $w$ is principal direction, in the sense of \cite{Medina3}, if and only if this is a Kummer principal direction associated to the congruence $\lbrace x, \xi \rbrace$.
\end{prop}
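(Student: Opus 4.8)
The plan is to show that, for $\xi=n$, the Kummer curvature function $\mathscr{K}_q^{\Omega}$ of the congruence $\{x,\xi\}$ and the $\Omega$-relative normal curvature function of the frontal $x$ used in \cite{Medina3} --- call it $\nu_q^{\Omega}$ --- differ, at each fixed $q$, by a quantity independent of the direction; the two functions then have the same extreme-value directions, and these are precisely the two notions of principal direction to be identified (Definition \ref{defdirprincipal} here and definition 3.2 in \cite{Medina3}; compare Remark \ref{remarkindepenciabase}). First I would use that $\xi=n$ forces $\Delta_{\Omega}=\mu_{\Omega}$, whence $\bm{\mathcal{I}}_{\Omega}=\Omega^{T}\Omega=\bI_{\Omega}$ and, from $Dx=\Omega\Lambda_{\Omega}^{T}$, $\bm{\mathcal{II}}_{\Omega}=-\Omega^{T}Dx=-\bI_{\Omega}\Lambda_{\Omega}^{T}$. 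Substituting into (\ref{funcaok}),
\[
\mathscr{K}_q^{\Omega}(b)=\frac{-\,b^{T}\bI_{\Omega}\Lambda_{\Omega}^{T}adj(\mu_{\Omega})^{T}b}{b^{T}\bI_{\Omega}b},
\qquad\text{whereas}\qquad
\nu_q^{\Omega}(b)=\frac{-\,b^{T}adj(\Lambda_{\Omega})\mu_{\Omega}\bI_{\Omega}\,b}{b^{T}\bI_{\Omega}b},
\]
the second expression being the relative version of the classical normal curvature of $x$ (multiplied by $\lambda_{\Omega}$) --- up to the overall sign and power of $\lambda_{\Omega}$ fixed by the conventions of \cite{Medina3}, which are immaterial below. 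The key point is that both quotients have the same denominator $b^{T}\bI_{\Omega}b$.

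The heart of the proof is the purely algebraic identity, valid for any two $2\times2$ matrices $A$ and $B$,
\[
adj(A)\,B+adj(B)\,A=\operatorname{tr}(adj(A)\,B)\,\bI ,
\]
a one-line verification (linearity of the adjugate in dimension two, together with $C+adj(C)=(\operatorname{tr}C)\,\bI$). Since a quadratic form depends only on the symmetric part of its matrix, I would replace the two numerator matrices above by their symmetrizations and add them: applying the identity with $A=\mu_{\Omega}$ and $B=\Lambda_{\Omega}$ (and transposing once) the sum collapses to $-\tau(q)\,\bI_{\Omega}$, where $\tau(q)=\operatorname{tr}(adj(\mu_{\Omega})\Lambda_{\Omega})=\operatorname{tr}(\alpha_{\Omega})=-2H_{\Omega}(q)$ depends on $q$ alone. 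Hence
\[
\mathscr{K}_q^{\Omega}(b)+\nu_q^{\Omega}(b)=-\tau(q)\qquad\text{for every }b\neq\bm{0} .
\]

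Therefore, for each fixed $q$, the functions $\mathscr{K}_q^{\Omega}$ and $\nu_q^{\Omega}$ on $P_{\Omega}\setminus\{\bm{0}\}$ (equivalently on $S^{1}$) differ only by an additive constant, so they attain their extreme values at exactly the same directions $w\in P_{\Omega}$; by Definition \ref{defdirprincipal} these are the Kummer principal directions of $\{x,\xi\}$, and by definition 3.2 in \cite{Medina3} the principal directions of $x$, which is the assertion. I would also point out that, although $\{x,n\}$ is automatically a normal congruence, normality is never used --- the reduction is a pure matrix identity --- which is why no hypothesis relating $\Sigma(x)$ and $\Sigma(\xi)$ is needed here. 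The step I expect to demand the most care is the bookkeeping of the conventions of \cite{Medina3}: one must confirm there that ``principal direction of $x$'' means ``extreme-value direction of $\nu_q^{\Omega}$'', that $\nu_q^{\Omega}$ carries the denominator $b^{T}\bI_{\Omega}b$, and that its numerator matrix is $adj(\Lambda_{\Omega})\mu_{\Omega}\bI_{\Omega}$ up to an overall sign and a power of $\lambda_{\Omega}$; once the common denominator is in place, all that remains is the short identity above together with the triviality that adding a constant to a function does not move its critical points.
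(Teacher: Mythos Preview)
Your argument is correct and rests on the same $2\times 2$ adjugate algebra as the paper's proof, but the packaging is different. The paper first passes to an \emph{orthonormal} tangent moving basis (so that $\bI_{\Omega}$ becomes the identity), notes that both $\bII_{\Omega}\,adj(\Lambda_{\Omega}^{T})$ and $\bm{\mathcal{II}}_{\Omega}\,adj(\Delta_{\Omega}^{T})$ are symmetric, and then checks directly that one equals the adjugate of the other; since $adj(M)=(\operatorname{tr}M)\,I-M$ in dimension~$2$, the two matrices share eigenvectors, which are the two kinds of principal directions. Your Rayleigh-quotient formulation avoids the orthonormalization step and instead shows that the two curvature functions sum to the direction-independent constant $2H_{\Omega}(q)$; this is in fact precisely what underlies the paper's subsequent Remark~\ref{obscor3} (the swap $k_{1\Omega}=\mathscr{K}_{2\Omega}$, $k_{2\Omega}=\mathscr{K}_{1\Omega}$), so your route makes that remark an immediate by-product rather than a separate observation. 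Your caution about the conventions in \cite{Medina3} is appropriate but turns out to be harmless: the relevant numerator matrix there is indeed $\bII_{\Omega}\,adj(\Lambda_{\Omega}^{T})$ over the denominator $b^{T}\bI_{\Omega}b$, exactly as you surmised.
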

\begin{proof}
It follows from \cite{Medina3} and from remark \ref{remarkindepenciabase} that the principal directions and the Kummer principal directions do not depend on the chosen tangent moving basis, so let us consider $\Omega$ an orthonormal one, i.e, $\Omega^{T}\Omega = id_{\R^2}$. From lemma 3.1 and remark 3.1 in \cite{Medina3}, it follows that $\bII_{\Omega}adj(\Lambda_{\Omega}^{T})$ is symmetric, then the principal directions are given by its eigenvectors. Since the congruence is normal, $\bm{\cal{II}}_{\Omega}adj(\Delta_{\Omega}^{T})$ is symmetric and we get analogously that the Kummer principal directions are given by its eigenvectors. As $D\xi = \Omega \Delta_{\Omega}^{T}$ and $Dx = \Omega \Lambda_{\Omega}^{T}$, then
\begin{align*}
\bII_{\Omega}adj(\Lambda_{\Omega}^{T}) &= -\Omega^{T}D\xi adj(\Lambda_{\Omega}^{T}) = -\Omega^{T}\Omega\Delta_{\Omega}^{T} adj(\Lambda_{\Omega}^{T}) =- \Delta_{\Omega}^{T}adj(\Lambda_{\Omega}^{T})\\
\bm{\cal{II}}_{\Omega}adj(\Delta_{\Omega}^{T}) &= -\Omega Dx = -\Omega^{T}\Omega\Lambda_{\Omega}^{T}adj(\Delta_{\Omega}^{T}) = - \Lambda_{\Omega}^{T}adj(\Delta_{\Omega}^{T}).\end{align*}
Hence, $\bII_{\Omega}adj(\Lambda_{\Omega}^{T}) = adj(\bm{\cal{II}}_{\Omega}adj(\Delta_{\Omega}^{T}))$. Then, the eigenvectors of $\bII_{\Omega}adj(\Lambda_{\Omega}^{T})$ are the eigenvectors of $\bm{\cal{II}}_{\Omega}adj(\Delta_{\Omega}^{T})$, that is, $w \in P_{\Omega}$ is a principal direction if and only if is a Kummer principal direction.
\end{proof}

\begin{obs}\label{obscor3}\normalfont
	It follows from $\bII_{\Omega}adj(\Lambda_{\Omega}^{T}) = adj(\bm{\cal{II}}_{\Omega}adj(\Delta_{\Omega}^{T}))$ that if $w_{1}, w_{2}$ are the eigenvectors of $\bm{\cal{II}}_{\Omega}adj(\Delta_{\Omega}^{T})$ associated to the eigenvalues $\gamma_{1}, \gamma_{2}$, respectively, then $w_{1}, w_{2}$ are the eigenvectors of $\bII_{\Omega}adj(\Lambda_{\Omega}^{T})$ associated to the eigenvalues $\gamma_{2}, \gamma_{1}$, respectively. Furthermore, if $w_{1}$, for instance, is a Kummer principal direction associated to a maximum of $\mathscr{K}_{q}^{\Omega}$, then $w_{1}$ is a principal direction associated to a minimum of the $\Omega$-relative normal curvature, which is defined in \cite{Medina3}. That is, if we denote by $\mathscr{K}_{1\Omega}$ and $\mathscr{K}_{2\Omega}$ the minimum and the maximum of $\mathscr{K}_{q}^{\Omega}$, respectively, then $k_{1\Omega} = \mathscr{K}_{2\Omega}$ and $k_{2\Omega} = \mathscr{K}_{1\Omega}$ where $k_{1\Omega}$ and $k_{2\Omega}$ are the $\Omega$-relative principal curvatures (see section \ref{sec:back3}).
\end{obs}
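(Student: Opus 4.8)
The plan is to read everything off the single matrix identity $\bII_{\Omega}adj(\Lambda_{\Omega}^{T}) = adj(\bm{\cal{II}}_{\Omega}adj(\Delta_{\Omega}^{T}))$ already produced in the proof of Proposition \ref{cor3}, together with the elementary fact that, for a $2\times 2$ matrix, passing to the adjugate fixes the eigenvectors while interchanging the two eigenvalues. Write $A := \bm{\cal{II}}_{\Omega}adj(\Delta_{\Omega}^{T})$ and $B := \bII_{\Omega}adj(\Lambda_{\Omega}^{T})$, so that $B = adj(A)$. For any $2\times 2$ matrix one has $adj(A) = (\operatorname{tr}A)\,\mathrm{Id} - A$; hence if $A w = \gamma w$ then $Bw = adj(A)w = (\operatorname{tr}A - \gamma)w$, and because $\operatorname{tr}A = \gamma_{1}+\gamma_{2}$ the factor $\operatorname{tr}A-\gamma$ is precisely the complementary eigenvalue. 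This already gives the first assertion of the Remark: $w_{1},w_{2}$ are common eigenvectors of $A$ and $B$, and the eigenvalues attached to them are swapped.

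Next I would pin the eigenvalues to the two curvatures. By Remark \ref{remarkindepenciabase} (and exactly as in the proof of Proposition \ref{cor3}) the assertion is independent of the chosen tangent moving basis, so I may take $\Omega$ orthonormal, i.e. $\bm{\mathcal{I}}_{\Omega}=\bI_{\Omega}=\mathrm{Id}$. Then $\mathscr{K}_{q}^{\Omega}(b) = b^{T}Ab/(b^{T}b)$ is the Rayleigh quotient of $A$, which is symmetric by Proposition \ref{propcongnormal} since the congruence is normal; hence its extreme values $\mathscr{K}_{1\Omega}\le\mathscr{K}_{2\Omega}$ are the eigenvalues of $A$, attained at its eigenvectors. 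In the same way the $\Omega$-relative normal curvature of \cite{Medina3} is the Rayleigh quotient of $B$, so its extreme values are the eigenvalues of $B$. A short trace/determinant computation identifies these: using $\operatorname{tr}(P\,adj(Q))=(\operatorname{tr}P)(\operatorname{tr}Q)-\operatorname{tr}(PQ)$ and $\det adj(Q)=\det Q$ one finds $\operatorname{tr}B = 2H_{\Omega}$ and $\det B = \lambda_{\Omega}K_{\Omega}$, so the eigenvalues of $B$ are the roots of $t^{2}-2H_{\Omega}t+\lambda_{\Omega}K_{\Omega}$, namely the $\Omega$-relative principal curvatures $k_{1\Omega},k_{2\Omega}$ of Section \ref{sec:back3}.

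Now I combine the two facts. Let $w_{1}$ be the eigenvector on which $\mathscr{K}_{q}^{\Omega}$ attains its maximum, so $A w_{1} = \mathscr{K}_{2\Omega}w_{1}$. The adjugate swap gives $B w_{1} = (\operatorname{tr}A-\mathscr{K}_{2\Omega})w_{1}=\mathscr{K}_{1\Omega}w_{1}$, so $w_{1}$ is the eigenvector of $B$ carrying the \emph{smaller} of the two common eigenvalues; therefore along $w_{1}$ the relative normal curvature attains its minimum. This is the stated direction correspondence ``maximum of $\mathscr{K}_{q}^{\Omega}$ at $w_{1}$ $\Longleftrightarrow$ minimum of the relative normal curvature at $w_{1}$.'' Tracking this through the index convention of \cite{Medina3}, under which the relative principal curvature realized in the direction $w_{1}$ is labelled $k_{1\Omega}$, one obtains $k_{1\Omega}=\mathscr{K}_{2\Omega}$ and, symmetrically, $k_{2\Omega}=\mathscr{K}_{1\Omega}$, in the sense that each $k_{i\Omega}$ is realized in the same principal direction as the complementary Kummer extremum.

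The step I expect to be the main obstacle is this last bookkeeping: making sure the adjugate's reversal of the eigenvalue ordering lands on the labels $k_{1\Omega}=\mathscr{K}_{2\Omega}$ rather than $k_{1\Omega}=\mathscr{K}_{1\Omega}$. Concretely, one must verify that the relative principal curvature sharing a direction with the Kummer \emph{maximum} is indeed the one defined by $H_{\Omega}-\sqrt{H_{\Omega}^{2}-\lambda_{\Omega}K_{\Omega}}$, which requires care with the sign conventions in the definitions of $H_{\Omega}$, $\alpha_{\Omega}$ and of the relative normal curvature in \cite{Medina3}. The cleanest safeguard against a sign slip is to check the identity on the regular-surface reduction with an orthonormal principal frame, where $A=adj(B)$ sends the tangent direction of the larger classical principal curvature to the smaller relative curvature, confirming that the maximizing Kummer direction carries the minimizing relative direction.
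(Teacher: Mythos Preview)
The paper offers no separate proof here: the remark is stated with its justification folded into the opening clause ``It follows from $\bII_{\Omega}adj(\Lambda_{\Omega}^{T}) = adj(\bm{\cal{II}}_{\Omega}adj(\Delta_{\Omega}^{T}))$ that\dots''. Your argument via the $2\times2$ identity $adj(A)=(\operatorname{tr}A)\,\mathrm{Id}-A$ is exactly the natural way to unpack that sentence, and it correctly yields the eigenvector/eigenvalue swap that constitutes the first assertion. Your additional work---passing to an orthonormal $\Omega$, recognising both curvature functions as Rayleigh quotients, and verifying $\operatorname{tr}B=2H_{\Omega}$ and $\det B=\lambda_{\Omega}K_{\Omega}$ so that the eigenvalues of $B$ are precisely $k_{1\Omega},k_{2\Omega}$---supplies detail the paper leaves entirely implicit, so there is no methodological difference to report.

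Your self-flagged obstacle at the end is real and worth stating plainly. The adjugate swap you prove is a statement about \emph{directions}: the Kummer-maximising eigenvector carries the smaller $B$-eigenvalue, hence the minimum of the relative normal curvature. But $A$ and $adj(A)$ share the \emph{same} unordered pair of eigenvalues, so under the ordered numerical definitions $k_{1\Omega}=H_{\Omega}-\sqrt{H_{\Omega}^{2}-\lambda_{\Omega}K_{\Omega}}\le k_{2\Omega}$ of Section~\ref{sec:back3} together with $\mathscr{K}_{1\Omega}\le\mathscr{K}_{2\Omega}$, one literally obtains $k_{i\Omega}=\mathscr{K}_{i\Omega}$ as numbers, not the crossed identities $k_{1\Omega}=\mathscr{K}_{2\Omega}$. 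The remark should therefore be read---and your write-up should phrase it---as the direction-wise correspondence you already established ($k_{1\Omega}$ is realised in the direction where $\mathscr{K}_{q}^{\Omega}$ is maximal, and conversely), rather than as a reversal of ordered values. The paper does not disentangle this point, so your hedge is appropriate and not a gap in your reasoning; your attempted resolution via an ``index convention of \cite{Medina3}'' labelling $k_{1\Omega}$ by direction, however, conflicts with the explicit numerical definition given in Section~\ref{sec:back3} and should be dropped in favour of the directional reading.
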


\begin{ex}\normalfont
Despite proposition \ref{cor3}, it is not true that for a line congruence given by a frontal $x: U \to \R^3$ and its unit normal vector field $\xi: U \to S^2$, a curve is a Kummer principal line if and only if it is a line of curvature. Let us take, for instance, the congruence given by
\begin{align*}
x(u_{1}, u_{2}) &= \left(u_{1}, u_{2}, u_1^2u_2+u_2^2\right)\\
\xi(u_{1}, u_{2}) &= \dfrac{1}{\sqrt {4u_1^2u_2^2+u_1^4+4u_1^2u_2+4u_2^2+1}}\left(2u_1u_2, -u_1^2-2u_{2}, 1\right).
\end{align*}
In this case, the Gaussian curvature is given by 
\begin{align*}
K(u_{1}, u_{2}) = {\dfrac {-4({u_{1}}^{2}-u_{2})}{ \left( {u_{1}}^{4}+4{u_{1}}^{2}{u_{2
		}}^{2}+4{u_{1}}^{2}u_{2}+4{u_{2}}^{2}+1 \right)^{2}}}
\end{align*}
 so $u_{2} = u_{1}^2$ is a curve of parabolic points. The equation of the lines of curvature is given by
\begin{align*}
&(2u_{1}^{3}u_{2}^{2}-4u_{1}u_{2}^{3}+u_{1})u_{1}'^{2}+
(-u_{1}^{4}u_{2}-4u_{2}^{3}-u_{2}+1)u_{2}'u_{1}'\\
&+ (-
	u_{1}^{5}-2u_{1}^{3}u_{2}-u_{1})u_{2}'^{2}=0.
\end{align*}
On the other hand, the equation of principal surfaces of the congruence is given by
\begin{align*}
&\left( u_{2}-u_{1}^{2} \right)\left[  \left( 2u_{1}^{3}u_{2}^{2}-4u_{1}u_{2}^{3}+u_{1}
\right)u_{1}'^{2} + \left( -u_{1}^{4}u_{2}-4
u_{2}^{3}-u_{2}+1 \right)u_{2}'u_{1}' \right. \\  & \left. +\left( -u_{1}^{5}-2u_{1}^{3}u_{2}-u_{1} \right)u_{2}'^{2} \right] = 0.
\end{align*}
Then, the curve of parabolic points $ u_{2} = u_{1}^{2}$ is a Kummer principal line, but it is not a line of curvature.
\end{ex}

Next, we have some results regarding the relation between the Kummer principal lines and the lines of curvature of a frontal.
\begin{corollary}\label{corol43}
	Let $x: U \rightarrow \R^3$ be a frontal, $\Omega$ a tangent moving basis of $x$ and $\xi: U \rightarrow S^{2}$ a normal vector field induced by $\Omega$. If $\xi$ is a proper frontal, then the equation of principal surfaces is
	\begin{align}\label{cor2}
	\gamma'^{T}\Delta_{\Omega} \PP adj(\bm{\cal{II}}_{\Omega})^{T}\Delta_{\Omega} \bm{\cal{I}}_{\Omega}\Delta_{\Omega}^{T}\gamma' &= -K_{\Omega}\det(\mathbf{I}_{\Omega})\gamma'^{T}\PP\alpha^{T}_{\Omega}(\gamma)\gamma',
	\end{align}
	where $K_{\Omega}$ and $\alpha_{\Omega}$ are related with $x$ (see section \ref{sec:back3}). 
\end{corollary}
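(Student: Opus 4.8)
The plan is to deduce the statement from Theorem \ref{teoprincipal} and then rewrite the matrix $\delta_{\Omega}\PP adj(\bm{\cal{II}}_{\Omega})\bm{\cal{I}}_{\Omega}\Delta_{\Omega}^{T}$ produced by that theorem in terms of the invariants $K_{\Omega}$, $\mathbf{I}_{\Omega}$ and $\alpha_{\Omega}$ attached to $x$.

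First I would check that $\{x,\xi\}$ is a normal congruence: since $\xi$ is the unit normal of $x$ we have $\langle x_{u_{1}},\xi\rangle=\langle x_{u_{2}},\xi\rangle=0$, and differentiating the first relation with respect to $u_{2}$ and the second with respect to $u_{1}$ gives $\langle x_{u_{1}},\xi_{u_{2}}\rangle=-\langle x_{u_{1}u_{2}},\xi\rangle=\langle x_{u_{2}},\xi_{u_{1}}\rangle$, i.e. $\mathscr{M}_{1}=\mathscr{M}_{2}$, so by Proposition \ref{teonormal1} the congruence is normal. Hence Theorem \ref{teoprincipal} applies and, by Proposition \ref{obsprincipal}, the equation of principal surfaces coincides with $\gamma'^{T}\,\delta_{\Omega}\PP adj(\bm{\cal{II}}_{\Omega})\bm{\cal{I}}_{\Omega}\Delta_{\Omega}^{T}\,\gamma'=0$, so it remains to establish the matrix identity
\[
\delta_{\Omega}\PP adj(\bm{\cal{II}}_{\Omega})\bm{\cal{I}}_{\Omega}\Delta_{\Omega}^{T}=-K_{\Omega}\det(\mathbf{I}_{\Omega})\,\PP\alpha_{\Omega}^{T}.
\]

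Now I would use the hypotheses specific to this corollary. Because $\xi$ is the normal field induced by $\Omega$, we have $\Delta_{\Omega}=\mu_{\Omega}$ (Section \ref{sec:back3}), hence $\delta_{\Omega}=\det\mu_{\Omega}=K_{\Omega}$; and because $\Omega$ is a tangent moving basis of $x$, $Dx=\Omega\Lambda_{\Omega}^{T}$, so $\bm{\cal{II}}_{\Omega}=-\Omega^{T}Dx=-\Omega^{T}\Omega\Lambda_{\Omega}^{T}=-\bm{\cal{I}}_{\Omega}\Lambda_{\Omega}^{T}$ with $\bm{\cal{I}}_{\Omega}=\Omega^{T}\Omega=\mathbf{I}_{\Omega}$. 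Using the $2\times2$ adjugate identities $adj(AB)=adj(B)\,adj(A)$, $adj(cA)=c\,adj(A)$ and $adj(A)A=\det(A)\mathbf{I}$, I obtain
\[
adj(\bm{\cal{II}}_{\Omega})\bm{\cal{I}}_{\Omega}=-adj(\Lambda_{\Omega}^{T})\,adj(\mathbf{I}_{\Omega})\,\mathbf{I}_{\Omega}=-\det(\mathbf{I}_{\Omega})\,adj(\Lambda_{\Omega})^{T};
\]
multiplying on the right by $\Delta_{\Omega}^{T}=\mu_{\Omega}^{T}$ and recognizing $\alpha_{\Omega}^{T}=(\mu_{\Omega}\,adj(\Lambda_{\Omega}))^{T}=adj(\Lambda_{\Omega})^{T}\mu_{\Omega}^{T}$ gives $adj(\bm{\cal{II}}_{\Omega})\bm{\cal{I}}_{\Omega}\Delta_{\Omega}^{T}=-\det(\mathbf{I}_{\Omega})\,\alpha_{\Omega}^{T}$. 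Substituting $\delta_{\Omega}=K_{\Omega}$ and left-multiplying by $\PP$ yields the displayed matrix identity, and contracting both sides with $\gamma'$ (evaluating $\alpha_{\Omega}^{T}$ along $\gamma$) gives (\ref{cor2}).

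The computation is essentially bookkeeping; the points that require care are keeping the sign $adj(-A)=-adj(A)$ for $2\times2$ matrices correct, and not confusing the congruence matrix $\bm{\cal{II}}_{\Omega}=-\Omega^{T}Dx$ with the frontal's second fundamental matrix $\bII_{\Omega}=-\Omega^{T}D\xi$ of Section \ref{sec:back3}, since only the former enters here. The main obstacle, modest as it is, is to pin down that $\{x,\xi\}$ is normal, as this is exactly what licenses the application of Theorem \ref{teoprincipal}.
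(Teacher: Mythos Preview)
Your proof is correct and follows essentially the same route as the paper: apply Theorem~\ref{teoprincipal} to pass to $\delta_{\Omega}\PP adj(\bm{\cal{II}}_{\Omega})\bm{\cal{I}}_{\Omega}\Delta_{\Omega}^{T}$, then substitute $\Delta_{\Omega}=\mu_{\Omega}$, $\bm{\cal{I}}_{\Omega}=\mathbf{I}_{\Omega}$, $\bm{\cal{II}}_{\Omega}=-\mathbf{I}_{\Omega}\Lambda_{\Omega}^{T}$, expand the adjugate, and recognize $\alpha_{\Omega}^{T}$. The only difference is that you explicitly verify normality of $\{x,\xi\}$ via $\mathscr{M}_{1}=\mathscr{M}_{2}$, whereas the paper takes this as understood from the standing hypothesis that $\xi$ is the unit normal of $x$.
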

\begin{proof}
	Note that  $\Delta_{\Omega} =\mu_{\Omega}$, $K_{\Omega}=\det{\Delta_{\Omega}}$, $\bm{\cal{I}}_{\Omega}=\mathbf{I}_{\Omega}$ and $\bm{\cal{II}}_{\Omega}=-\mathbf{I}_{\Omega}\Lambda_{\Omega}^T$. Thus, from (\ref{desXprin})
	\begin{align*}
	\Delta_{\Omega} \PP adj(\bm{\cal{II}}_{\Omega})^{T}\Delta_{\Omega} \bm{\cal{I}}_{\Omega}\Delta_{\Omega}^{T} &=\delta_{\Omega} \PP adj(\bm{\cal{II}}_{\Omega}) \bm{\cal{I}}_{\Omega}\Delta_{\Omega}^{T}\\
	&=K_{\Omega} \PP adj(-\mathbf{I}_{\Omega}\Lambda_{\Omega}^T) \mathbf{I}_{\Omega}\mu_{\Omega}^{T}\\
	&=K_{\Omega} \PP adj(\Lambda_{\Omega}^T)adj(-\mathbf{I}_{\Omega}) \mathbf{I}_{\Omega}\mu_{\Omega}^{T}\\
	&=-K_{\Omega}\det(\mathbf{I}_{\Omega})\PP\alpha^{T}_{\Omega}(\gamma),
	\end{align*}
	where $Dx=\Omega\Lambda_{\Omega}^T$.
\end{proof}

Note that via corollary \ref{corol43} we can express the equation of principal surfaces of the congruence (\ref{repmatricialpri}) only considering quantities related to the frontal $x$, when we have an exact normal congruence.

\begin{obs}\normalfont
	It is worth observing above that $-\det(\mathbf{I}_{\Omega})\gamma'^{T}\PP\alpha^{T}_{\Omega}(\gamma)\gamma'=0$ is the equation of the developable surfaces of the congruence.
\end{obs}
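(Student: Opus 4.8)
The plan is to specialize the general equation of developable surfaces, Proposition \ref{propdesenv}, to the situation described just above the statement, where $x$ is a frontal, $\Omega$ a tangent moving basis of $x$, and $\xi$ the unit normal vector field induced by $\Omega$ (also used as a tangent moving basis of $\xi$). As recorded before Proposition \ref{cor3} and exploited in the proof of Corollary \ref{corol43}, in this case one has $\Delta_{\Omega}=\mu_{\Omega}$, $\bm{\cal{I}}_{\Omega}=\mathbf{I}_{\Omega}$, and $\bm{\cal{II}}_{\Omega}=-\Omega^{T}Dx=-\Omega^{T}\Omega\Lambda_{\Omega}^{T}=-\mathbf{I}_{\Omega}\Lambda_{\Omega}^{T}$. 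Substituting these three identities into \eqref{eqdesenvolsup}, the equation of developable surfaces of the congruence takes the form
\[
\gamma'^{T}\PP\,adj(-\mathbf{I}_{\Omega}\Lambda_{\Omega}^{T})\,\mathbf{I}_{\Omega}\,\mu_{\Omega}^{T}\gamma'=0 .
\]

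Next I would simplify the middle block with the standard rules for the adjugate of $2\times 2$ matrices: $adj(AB)=adj(B)\,adj(A)$, $adj(-\mathbf{I}_{\Omega})=-adj(\mathbf{I}_{\Omega})$, and $adj(\mathbf{I}_{\Omega})=\det(\mathbf{I}_{\Omega})\,\mathbf{I}_{\Omega}^{-1}$ (legitimate since $\bm{\cal{I}}_{\Omega}$ is positive definite, hence invertible). These give
\[
adj(-\mathbf{I}_{\Omega}\Lambda_{\Omega}^{T})\,\mathbf{I}_{\Omega}=adj(\Lambda_{\Omega}^{T})\bigl(-\det(\mathbf{I}_{\Omega})\,\mathbf{I}_{\Omega}^{-1}\bigr)\mathbf{I}_{\Omega}=-\det(\mathbf{I}_{\Omega})\,adj(\Lambda_{\Omega}^{T}),
\]
so the equation becomes $-\det(\mathbf{I}_{\Omega})\,\gamma'^{T}\PP\,adj(\Lambda_{\Omega}^{T})\,\mu_{\Omega}^{T}\gamma'=0$. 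Finally I would use $adj(A)^{T}=adj(A^{T})$ together with the definition $\alpha_{\Omega}=\mu_{\Omega}\,adj(\Lambda_{\Omega})$ to recognize $adj(\Lambda_{\Omega}^{T})\,\mu_{\Omega}^{T}=\bigl(\mu_{\Omega}\,adj(\Lambda_{\Omega})\bigr)^{T}=\alpha_{\Omega}^{T}$, which turns the equation into $-\det(\mathbf{I}_{\Omega})\,\gamma'^{T}\PP\alpha_{\Omega}^{T}(\gamma)\gamma'=0$, as claimed.

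I do not expect any real obstacle here: the argument is a purely formal computation once the identifications $\Delta_{\Omega}=\mu_{\Omega}$, $\bm{\cal{I}}_{\Omega}=\mathbf{I}_{\Omega}$, $\bm{\cal{II}}_{\Omega}=-\mathbf{I}_{\Omega}\Lambda_{\Omega}^{T}$ are in place. The only points demanding a little care are the sign produced by $adj(-\mathbf{I}_{\Omega})=-adj(\mathbf{I}_{\Omega})$, which is special to the $2\times 2$ case, and the bookkeeping of transposes needed to read off $\alpha_{\Omega}^{T}$. As a consistency check with Theorem \ref{teoprincipal} and Corollary \ref{corol43}: multiplying the displayed developable equation by $\delta_{\Omega}=\det\Delta_{\Omega}=\det\mu_{\Omega}=K_{\Omega}$ reproduces exactly the right-hand side of \eqref{cor2}, and since $\det(\mathbf{I}_{\Omega})>0$ the scalar factor does not alter the solution set, confirming that this binary differential equation is indeed that of the developable surfaces.
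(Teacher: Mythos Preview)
Your argument is correct and is essentially the same computation the paper carries out in the proof of Corollary~\ref{corol43}: there the chain $\PP\,adj(-\mathbf{I}_{\Omega}\Lambda_{\Omega}^{T})\,\mathbf{I}_{\Omega}\mu_{\Omega}^{T}=\PP\,adj(\Lambda_{\Omega}^{T})\,adj(-\mathbf{I}_{\Omega})\,\mathbf{I}_{\Omega}\mu_{\Omega}^{T}=-\det(\mathbf{I}_{\Omega})\PP\alpha_{\Omega}^{T}$ already appears, multiplied on both sides by $K_{\Omega}=\delta_{\Omega}$, and the remark simply strips that common factor. Your version starts directly from \eqref{eqdesenvolsup} rather than reading the identity off from \eqref{cor2}, but the algebraic content is identical.
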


\begin{corollary}\label{cor33}
	Let $x: U \rightarrow \R^3$ and $\xi: U \rightarrow S^{2}$ be two proper frontals with the same singular sets, such that $\xi$ is the unit normal vector field of $x$. Then, a curve on $x$ is a Kummer principal line if and only if it is a line of curvature of $x$.
\end{corollary}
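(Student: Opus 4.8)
The plan is to reduce the statement to the two binary differential equations already at hand, together with the multiplicative factor $\delta_\Omega$ produced by Theorem \ref{teoprincipal}. Fix a tangent moving basis $\Omega$ of $x$; since $\xi$ is the unit normal vector field induced by $\Omega$, it is also a tangent moving basis of $\xi$, and one has $\Delta_\Omega=\mu_\Omega$, $\bm{\cal{I}}_\Omega=\mathbf I_\Omega$ and $\bm{\cal{II}}_\Omega=-\mathbf I_\Omega\Lambda_\Omega^{T}$, as in Corollary \ref{corol43}. In particular $\delta_\Omega=\det\Delta_\Omega=\det\mu_\Omega=K_\Omega$, so $\Sigma(\xi)=\delta_\Omega^{-1}(0)=K_\Omega^{-1}(0)$; by hypothesis this set equals $\Sigma(x)=\lambda_\Omega^{-1}(0)$, and since $x$ (equivalently $\xi$) is a proper frontal it is closed with empty interior, hence $U\setminus\Sigma(x)$ is open and dense in $U$.

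First I would record the two equations with the factor $K_\Omega$ made explicit. By Corollary \ref{corol43} the equation of the principal surfaces of $\{x,\xi\}$ is $\gamma'^{T}\Delta_\Omega\PP adj(\bm{\cal{II}}_\Omega)^{T}\Delta_\Omega\bm{\cal{I}}_\Omega\Delta_\Omega^{T}\gamma'=-K_\Omega\det(\mathbf I_\Omega)\,\gamma'^{T}\PP\alpha_\Omega^{T}(\gamma)\gamma'=0$, while by the remark right after it the equation of the developable surfaces of $\{x,\xi\}$ is $-\det(\mathbf I_\Omega)\,\gamma'^{T}\PP\alpha_\Omega^{T}(\gamma)\gamma'=0$; thus the former is exactly $K_\Omega$ times the latter, which is Theorem \ref{teoprincipal} in the case $\xi=n$. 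Moreover, exactly as in the proof of Proposition \ref{Cong_sup_desen}, the developability condition $[x',\xi',\xi]=0$ — using $\|\xi\|=1$, $\xi\perp P_\Omega$ and $Dx(\gamma'),D\xi(\gamma')\in P_\Omega$ — is equivalent to $Dx(\gamma')$ and $D\xi(\gamma')$ being linearly dependent, i.e.\ to Rodrigues' relation; so the equation of developable surfaces of $\{x,\xi\}$ is precisely the equation of the lines of curvature of $x$ in the sense of \cite{Medina3}, in agreement with Proposition \ref{cor3}. Setting $\Theta(t):=-\det(\mathbf I_\Omega)\,\gamma'(t)^{T}\PP\alpha_\Omega^{T}(\gamma(t))\gamma'(t)$, we conclude that $\gamma$ is a line of curvature of $x$ iff $\Theta\equiv0$, whereas $\gamma$ is a Kummer principal line iff $K_\Omega(\gamma)\,\Theta\equiv0$.

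The implication ``line of curvature $\Rightarrow$ Kummer principal line'' is then immediate, since the extra scalar factor $K_\Omega(\gamma(t))$ cannot spoil the vanishing of $\Theta$. For the converse, let $\gamma:I\to U$ be a Kummer principal line; on the open set $I^{\ast}=\{t\in I:\gamma(t)\notin\Sigma(\xi)\}$ we have $K_\Omega(\gamma(t))\neq0$, hence $\Theta(t)=0$, and since $t\mapsto\Theta(t)$ is continuous it suffices to show that $I^{\ast}$ is dense in $I$, i.e.\ that $\gamma$ does not run inside $\Sigma(\xi)=\Sigma(x)$ on any subinterval. This is exactly the point at which the hypothesis $\Sigma(x)=\Sigma(\xi)$ is used, and I expect it to be the main obstacle: on an arc contained in the common singular set both $\Lambda_\Omega$ and $\mu_\Omega$ drop rank, and one must show that $\det\!\big(\Lambda_\Omega^{T}\gamma'\mid\mu_\Omega^{T}\gamma'\big)$ still vanishes along it — equivalently, that the branches of $\Sigma(x)=\Sigma(\xi)$ are themselves lines of curvature of $x$. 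For this I would combine the normality of $\{x,\xi\}$, i.e.\ the symmetry of $\bm{\cal{II}}_\Omega adj(\Delta_\Omega^{T})$ (Proposition \ref{propcongnormal}), with the identity $\bII_\Omega adj(\Lambda_\Omega^{T})=adj(\bm{\cal{II}}_\Omega adj(\Delta_\Omega^{T}))$ from the proof of Proposition \ref{cor3} to pin down the relative position of $\ker\Lambda_\Omega$ and $\ker\mu_\Omega$ on the common singular set. Granting this, $\Theta\equiv0$ on all of $I$, so $\gamma$ is a line of curvature of $x$, and the two notions coincide.
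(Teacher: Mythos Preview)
Your setup is right up to and including the identity
$\gamma'^{T}\Delta_{\Omega}\PP adj(\bm{\cal{II}}_{\Omega})^{T}\Delta_{\Omega}\bm{\cal{I}}_{\Omega}\Delta_{\Omega}^{T}\gamma'
=-K_{\Omega}\det(\mathbf{I}_{\Omega})\,\gamma'^{T}\PP\alpha_{\Omega}^{T}(\gamma)\gamma'$,
so that the Kummer principal lines are the solutions of $K_{\Omega}\cdot\Theta=0$ with
$\Theta=-\det(\mathbf{I}_{\Omega})\,\gamma'^{T}\PP\alpha_{\Omega}^{T}(\gamma)\gamma'$. The genuine gap is in your identification of the \emph{lines of curvature} of the frontal $x$. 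You take them to be the solutions of $\Theta=0$ (the developable--surface equation), arguing via Rodrigues' relation as in the regular case. For frontals this is not the definition used: by corollary~5.1 in \cite{Medina3} (quoted in the paper's proof), the equation of the lines of curvature of $x$ is
\[
\lambda_{\Omega}\,\gamma'^{T}\PP\alpha_{\Omega}^{T}(\gamma)\gamma'=0,
\]
i.e.\ $\lambda_{\Omega}\cdot\Theta=0$, not $\Theta=0$. The extra factor $\lambda_{\Omega}$ is intrinsic to the frontal setting, exactly parallel to the factor $\delta_{\Omega}=K_{\Omega}$ that appears on the Kummer side.

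Once you use the correct equation, the whole ``obstacle'' you describe disappears and the proof is one line: a curve is a Kummer principal line iff $K_{\Omega}(\gamma)\,\Theta\equiv0$, and a line of curvature iff $\lambda_{\Omega}(\gamma)\,\Theta\equiv0$; the hypothesis $\Sigma(x)=\Sigma(\xi)$ says precisely $K_{\Omega}^{-1}(0)=\lambda_{\Omega}^{-1}(0)$, so pointwise $K_{\Omega}(\gamma(t))\Theta(t)=0\Leftrightarrow\lambda_{\Omega}(\gamma(t))\Theta(t)=0$. There is no need for a density argument, no need to analyse arcs inside $\Sigma(x)$, and no need to compare $\ker\Lambda_{\Omega}$ with $\ker\mu_{\Omega}$. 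Your attempted converse is in fact not completed (``Granting this\dots''), and the difficulty you ran into was created by the wrong target equation; with the right one, both implications are immediate.
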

\begin{proof}
	The results follows from the fact that	$\lambda_{\Omega}\gamma'^{T}\PP\alpha^{T}_{\Omega}(\gamma)\gamma'=0$ is the equation of the lines of curvature (see corollary 5.1 in \cite{Medina3}) and from $K_{\Omega}^{-1}(0)=\lambda_{\Omega}^{-1}(0)$.
\end{proof}

\begin{corollary}\label{cor44}
	Let $x: U \rightarrow \R^3$ be a proper frontal with extendable normal curvature, such that the extension of the Gaussian curvature $K$ never vanishes. Then, the Kummer principal lines coincide with the lines of curvature of $x$.
\end{corollary}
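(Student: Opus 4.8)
The plan is to reduce this statement to Corollary \ref{cor33}, whose hypotheses (two proper frontals with the same singular set, one being the unit normal of the other) must be produced from the single hypothesis here. So the first step is to recall from the theory of frontals developed in \cite{Medina,Medina3} that a proper frontal $x$ with extendable normal curvature automatically has a globally defined unit normal vector field $\xi = n$ induced by any tangent moving basis $\Omega$ of $x$, and that $n$ is itself a frontal. We are then in the setting of the last section of the excerpt: $\xi = n$, $\Delta_\Omega = \mu_\Omega$, and $K_\Omega = \det(\mu_\Omega)$ is the $\Omega$-relative curvature of $x$, which differs from the (extended) Gaussian curvature $K$ only by a nowhere-zero factor coming from $\det(\mathbf{I}_\Omega)$. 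Hence the hypothesis ``the extension of $K$ never vanishes'' translates into $K_\Omega$ never vanishing, i.e. $\Sigma(\xi) = \mu_\Omega^{-1}(0) = \emptyset$.

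Next I would observe that $\Sigma(\xi) = \emptyset$ means $\xi$ is in fact an immersion, so it is trivially a proper frontal, and $\Sigma(x) \supseteq \emptyset = \Sigma(\xi)$; but more is needed: Corollary \ref{cor33} requires $\Sigma(x) = \Sigma(\xi)$. Here the key point is that if $K$ (the extended Gaussian curvature) never vanishes then, by the results of \cite{Medina3} relating $K$, $\lambda_\Omega = \det\Lambda_\Omega$ and $K_\Omega$ via $\lambda_\Omega K = (\text{nonzero})\, K_\Omega$ type identities, one gets that $\lambda_\Omega$ never vanishes either, so $\Sigma(x) = \lambda_\Omega^{-1}(0) = \emptyset = \Sigma(\xi)$. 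Thus both frontals are non-singular everywhere, they have the same (empty) singular set, and $\xi$ is the unit normal of $x$, so Corollary \ref{cor33} applies verbatim and yields that the Kummer principal lines of $\{x,\xi\}$ coincide with the lines of curvature of $x$.

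Alternatively — and this is probably the cleaner route, avoiding any discussion of whether $\Sigma(x)$ is empty — I would argue directly from Corollary \ref{corol43}. Its formula \eqref{cor2} shows that the equation of principal surfaces is $-K_\Omega \det(\mathbf{I}_\Omega)\,\gamma'^{T}\PP\alpha_\Omega^{T}(\gamma)\gamma' = 0$, while the equation of the lines of curvature is $\lambda_\Omega\,\gamma'^{T}\PP\alpha_\Omega^{T}(\gamma)\gamma' = 0$ (corollary 5.1 in \cite{Medina3}). Since $\det(\mathbf{I}_\Omega) > 0$ always and, under the present hypothesis, the extension of $K$ — hence $K_\Omega$ — never vanishes, the factor $-K_\Omega\det(\mathbf{I}_\Omega)$ is nowhere zero; likewise one uses that extendability of the normal curvature forces the relevant comparison between $\lambda_\Omega$ and $K_\Omega$ so that the two binary differential equations define the same solution curves. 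Concretely, both equations are scalar multiples of the single form $\gamma'^{T}\PP\alpha_\Omega^{T}(\gamma)\gamma' = 0$ by functions that never vanish, so they have identical integral curves.

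The main obstacle I anticipate is purely bookkeeping: making precise the exact relations among $K$, $K_\Omega$, $\lambda_\Omega$ and $\det(\mathbf{I}_\Omega)$ coming from \cite{Medina,Medina3} (in particular the identity expressing the extended Gaussian curvature as $K = K_\Omega / \det(\mathbf{I}_\Omega)$ away from the singular set, and the role of ``extendable normal curvature'' in guaranteeing that $K_\Omega$ rather than merely $\lambda_\Omega K_\Omega$ is the object that extends). Once those identities are quoted correctly, the corollary is immediate from \eqref{cor2}; there is no substantive new computation, only the verification that the multiplicative factor relating the principal-surface equation to the line-of-curvature equation is nowhere zero under the stated hypothesis.
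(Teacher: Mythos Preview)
Your overall strategy --- reduce to Corollary \ref{cor33} by showing that $x$ and its unit normal $\xi$ have the same singular set --- is exactly the paper's approach. The paper simply cites corollary~3.1 of \cite{Medina3} for the fact that, under the hypotheses of extendable normal curvature and nowhere-vanishing extended $K$, one has $\Sigma(x)=\Sigma(\xi)$, and then invokes Corollary \ref{cor33}.

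However, your execution of that step contains a genuine error. You assert the identity $K = K_\Omega/\det(\mathbf{I}_\Omega)$ and conclude from $K\neq 0$ that $K_\Omega$ never vanishes, hence $\Sigma(\xi)=\emptyset$, and then further that $\lambda_\Omega$ never vanishes, hence $\Sigma(x)=\emptyset$. This is wrong on both counts. The correct relation (visible in the worked example immediately following the corollary in the paper) is that at regular points $K = K_\Omega/\lambda_\Omega$. Thus the hypothesis that the extension of $K$ is nowhere zero says precisely that $K_\Omega$ and $\lambda_\Omega$ vanish on the \emph{same} set, i.e.\ $K_\Omega^{-1}(0)=\lambda_\Omega^{-1}(0)$, which is $\Sigma(\xi)=\Sigma(x)$. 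Neither set need be empty: in the paper's example one has $\lambda_\Omega=2u_2$ and $K_\Omega$ has $2u_2$ as a factor, so both vanish along $u_2=0$ while $K$ extends to a nowhere-zero function. Your alternative route through Corollary \ref{corol43} inherits the same mistake when you claim that ``the factor $-K_\Omega\det(\mathbf{I}_\Omega)$ is nowhere zero''.

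Once the relation is corrected to $K_\Omega^{-1}(0)=\lambda_\Omega^{-1}(0)$, your argument becomes the paper's: this equality of zero sets is exactly the hypothesis of Corollary \ref{cor33} (or, equivalently, the two binary differential equations $-K_\Omega\det(\mathbf{I}_\Omega)\,\gamma'^{T}\PP\alpha_\Omega^{T}\gamma'=0$ and $\lambda_\Omega\,\gamma'^{T}\PP\alpha_\Omega^{T}\gamma'=0$ have the same solutions because their scalar prefactors share a common zero locus).
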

\begin{proof}
	It follows from corollary 3.1 in \cite{Medina3} that $x$ and its normal vector field have the same singular set, hence applying corollary \ref{cor3} we have the result.
\end{proof}

\begin{ex}\normalfont
	Let $x: U \rightarrow \R^3$ defined by $x = \left(u_{1},  \frac{2}{5}u_{2}^5 + u_{2}^2, u_{1}u_{2}^2  \right)$, for $ U = \left(-1,1 \right) \times (-1, 1)$ (Figure \ref{frontalnp}). Then $Dx = \Omega \Lambda_{\Omega}^{T}$, where
	\begin{align}
	\Omega = \begin{pmatrix}
	1 & 0\\
	0 & u_{2}^3 + 1\\
	u_{2}^2 & u_{1}
	\end{pmatrix}\; \text{and}\; \Lambda_{\Omega} = \begin{pmatrix}
	1 & 0\\
	0 & 2u_{2}
	\end{pmatrix}.
	\end{align}
	The unit normal vector field induced by $\Omega$ is given by 
	\begin{align*}
\xi = \frac{1}{\mu
}(-{u_{2}}^{2}( u_{2}+1)({u_{2}}^{2}-u_{2}+1), -u_{1}, ( u_{2}+1)  ( {u_{2}}^{2}-u_{2}+1) ),	
	\end{align*}
	where $\mu = \sqrt{{u_{2}}^{10}+2{u_{2}}^{7}+{u_{2}}^{6}+{u_{2}}^{4}+2{u_{2}}^{3}+{u_
			{1}}^{2}+1}$.
The frontal $x$ in this example is special, because it is a frontal with extendable normal curvature without false singularities (see comments after theorem 3.2 in \cite{Medina3}). Furthermore, $\lambda_{\Omega} = 2u_{2}$ and 
\begin{align*}
K_{\Omega} = \dfrac{2u_{2}(u_{2} + 1)^2(u_{2}^2 - u_{2} + 1)^2}{(u_{2}^{10} + 2u_{2}^7 + u_{2}^6 + u_{2}^4 + 2u_{2}^3 + u_{1}^2 + 1)^2},\end{align*}
therefore, considering that at a regular point the Gaussian curvature is given by $\dfrac{K_{\Omega}}{\lambda_{\Omega}}$, we obtain that
\begin{align*}
K = \dfrac{(u_{2} + 1)^2(u_{2}^2 - u_{2} + 1)^2}{(u_{2}^{10} + 2u_{2}^7 + u_{2}^6 + u_{2}^4 + 2u_{2}^3 + u_{1}^2 + 1)^2}
\end{align*}
 is the extension of the Gaussian curvature to $U$. Then, the Gaussian curvature also admits an extension to $U$ and in this case, the extension is non-vanishing. By applying corollary \ref{cor44}, the Kummer principal lines coincide with the lines of curvature of $x$, which are given by the implicit differential equation
 \begin{align*}
 &2u_{2}\left[\left( {u_{2}}^{7}+{u_{2}}^{4}+{u_{2}}^{3}+1 \right) {u_{1}'}^{2}+ \left( 3u_{1}{u_{2}}^{6}+3u_{1}{u_{
 		2}}^{2} \right) u_{1}'u_{2}' \right]\\
 &+ 2u_2\left[\left( -4{u_{2}}^{11}-12{u_{2}}^{8}+2{u_{1}}^{2}{u_{2}
 }^{5}-12{u_{2}}^{5}-4{u_{1}}^{2}{u_{2}}^{2}-4{u_{2}}^{2}
 \right) u_{2}'^{2}\right] = 0.
 \end{align*}
\begin{figure}[ht!]
 	\includegraphics[scale=0.27]{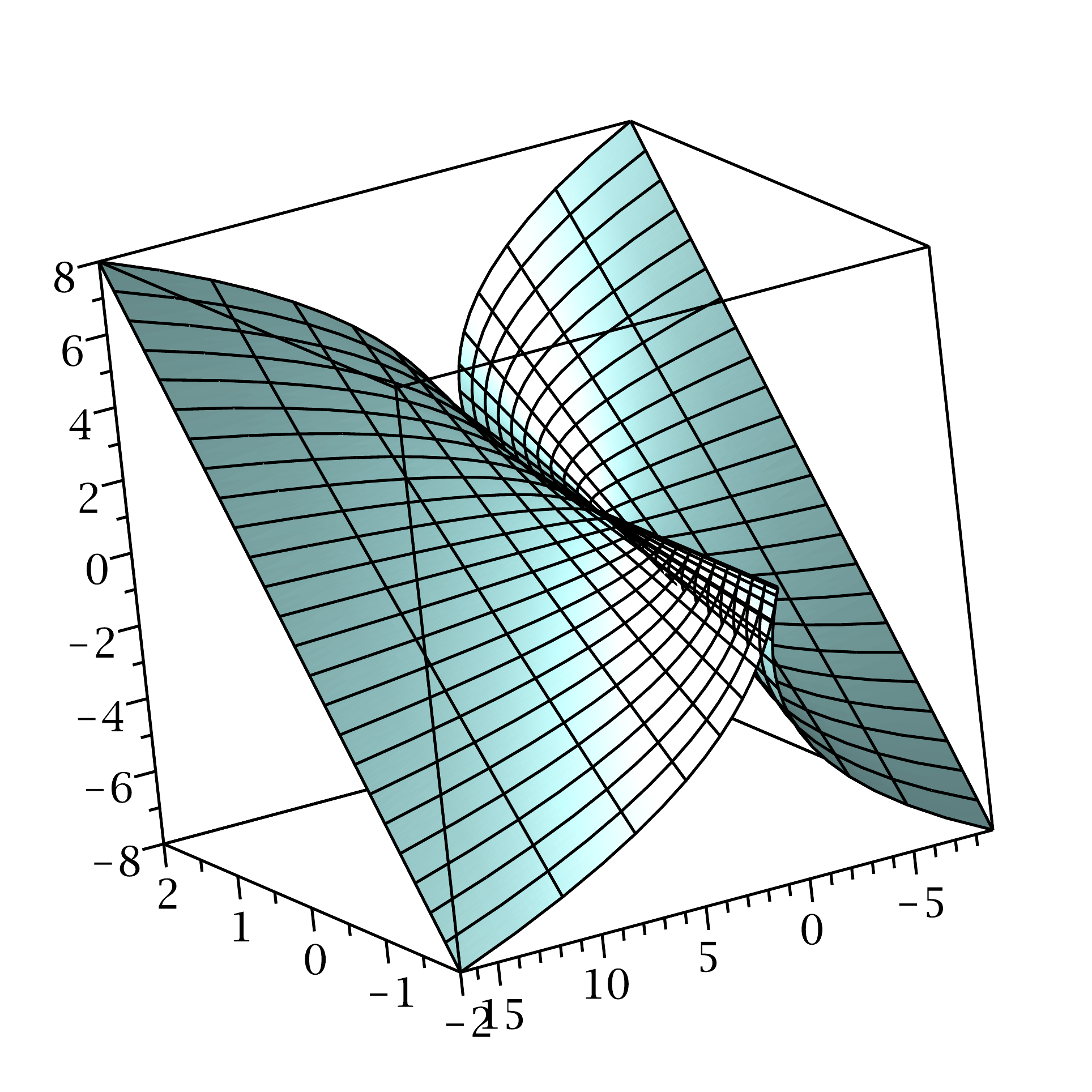}
 	\includegraphics[scale=0.27]{lambdaeq1-eps-converted-to.pdf}
 	\caption{Frontal for which the unit normal vector has the same singular set.}
 	\label{frontalnp}
 \end{figure}
\end{ex}

{\footnotesize \bibliographystyle{siam}
	\bibliography{bibi}}

\end{document}